\newtheorem{theorem}{Theorem}[section]
\newtheorem{lemma}[theorem]{Lemma}
\newtheorem{prop}[theorem]{Proposition}
\def \mcc {{\mathcal C}}
\def \mcd {{\mathcal D}}
\def \mce {{\mathcal E}}
\def \mcf {{\mathcal F}}
\def \mck {{\mathcal K}}
\def \mcm {{\mathcal M}}
\def \mcn {{\mathcal N}}
\def \mcp {{\mathcal P}}
\def \mcs {{\mathcal S}}
\def \mcu {{\mathcal U}}
\def \mcv {{\mathcal V}}
\def \mcx {{\mathcal X}}
\def \msh {{\mathscr H}}
\def \msu {{\mathscr U}}
\def \mbc {{\mathbb C}}
\def \mbr {{\mathbb R}}
\def \mbs {{\mathbb S}}
\def \id {\operatorname{Id}}
\def \comp {\operatorname{comp}}
\def \loc {\operatorname{loc}}
\def \diag{\textrm{Diag}}
\def \supp {\text{supp }}
\def \defeq {\stackrel{\operatorname{def}}{=}}
\def \beqq {\begin{equation}}
\def \eeqq {\end{equation}}
\def \bpf {\begin{proof}}
\def \epf {\end{proof}}
\def \beq {\begin{equation*}}
\def \eeq {\end{equation*}}
\def \eps {\epsilon}   
\def \la {\lambda}   
\def \La {\Lambda}    
\def \lap {\Delta}
\def \p {\partial}
\def \ha {\frac{1}{2}}
\begin{document}
\title[]{Inverse source problem for the Boltzmann equation in cosmology}
\author{Yiran Wang}
\address{Yiran Wang
\newline
\indent Department of Mathematics, Emory University
\newline
\indent 400 Dowman Drive, Atlanta, Georgia 30322, USA}
\email{yiran.wang@emory.edu}
\begin{abstract}
We study the inverse problem of recovering primordial perturbations from  anisotropies of Cosmic Microwave Background (CMB) using the kinetic model. Mathematically, the problem in concern is  the  inverse source problem for the linear Boltzmann equation with measurements on some Cauchy surface. We obtain two stable determination results  for generic absorption coefficients and scattering kernels. 
\end{abstract}
\date{\today.}
 
\maketitle


\section{Introduction}\label{sec-intro}
Consider the source problem for the linear Boltzmann equation (or non-stationary transport equation) on $\mcm = (0, T)\times \mbr^{3}, T>0$: 
\beqq\label{eq-boltz}
\begin{gathered}
\p_t u(t, x, \theta) + \theta \cdot \nabla_x u(t, x, \theta) + \sigma(t, x, \theta) u(t, x, \theta)   \\
= \int_{\mbs^{2}} k(t, x, \theta, \theta') u(t, x, \theta') d\theta' + f(t, x), 
\end{gathered}
\eeqq
where $t\in (0, T), x\in \mbr^3,  \theta\in \mbs^{2}$. Here, $\sigma$ is the absorption coefficient,  $k$ is the scattering kernel and $f$ is the source term.  We consider the zero initial condition 
\beqq\label{eq-boltz1}
u(0, x, \theta) = 0.
\eeqq
In this work, we study the inverse problem of determining the source term $f$ from the measurement of $u$ at  $t = T >0$
\beqq\label{eq-ut}
u(T, x, \theta) = u_T(x, \theta).
\eeqq 

The inverse problem for \eqref{eq-boltz} and its stationary version has a rich history, see Section 7.4 of \cite{Isa}. Both the determination of $\sigma, k$ and the source term $f$ have been investigated. In particular, there are lots of interest due to its application in optical imaging, see  for example review papers \cite{Bal, Ste}. Recently,  related inverse problems for the nonlinear Boltzmann equations have been studied, see \cite{BKL, LUY}.  Our perspective is somewhat different from previous works as our motivation comes from  inverse problems in cosmology.  We are interested in the determination of primordial gravitational perturbations from the anisotropies of the Cosmic Microwave Background (CMB), see \cite{KDS}. The physics background will be discussed in Section \ref{sec-cmb}.  The pure transport regime (namely without $\sigma, K$ in \eqref{eq-boltz})  serves as a good model for the standard universe after the decoupling time or the ``surface of last scattering". This was studied by Vasy and the author in \cite{VaWa} using the light ray transform, see also \cite{Wan2}. Before the decoupling time, photon interactions cannot be ignored and a kinetic model  based on the Boltzmann equation is appropriate. As is well-known in cosmology literatures e.g.\ \cite{Dod, Dur} (see also Section \ref{sec-cmb}), the linearization of the  Boltzmann equation on a Friedman-Lema\^ite-Robertson-Walker (FLRW) universe with respect to small metric perturbations naturally leads to a source problem for the Boltzmann equation as \eqref{eq-boltz} in which the source term is related to the metric perturbation. In fact, similar problems can also be considered in the relativistic kinetic theory such as the  Nordstr\"om-Vlasov system (see Section 4 of \cite{FJS}) on the linearization level, or even the Einstein-Boltzmann equations.

In this work, we obtain two results on the stable determination of the source term in \eqref{eq-boltz}. We  introduce some notations for stating the results. Consider the Minkowski spacetime $(\mbr^{3+1}, g)$ with signature $(-, +, +, +)$. Let $(\tau, \xi), \tau\in \mbr, \xi\in \mbr^3$ be the dual variables of $(t, x)$. 
We let $\Gamma^{tm}_\pm = \{(\tau, \xi)\in \mbr^{3+1}: \tau^2 > |\xi|^2, \pm \tau > 0\}$ be the set of future/past pointing time-like vectors. Let $\Gamma^{sp} =  \{(\tau, \xi)\in \mbr^{3+1}: \tau^2 < |\xi|^2\}$ be the set of space-like  vectors. Finally, let $\Gamma^{lt}_\pm = \{(\tau, \xi)\in \mbr^{3+1}: \tau^2 = |\xi|^2, \pm \xi_0 > 0\}$ be the set of future/past pointing light-like  vectors. We also let $\Gamma^{lt} = \Gamma_+^{lt}\cup \Gamma_-^{lt}$.  Let $\phi$ be the characteristic function of $\Gamma^{sp}$.  We define $\phi(D)$ to be  a Fourier multiplier $\phi(D) f = \mcf^{-1}(\phi \mcf f),  f\in L^2(\mbr^4)$ where $\mcf, \mcf^{-1}$ denote the Fourier and inverse Fourier transform in $t, x$ variables. We set $\mcv = (0, T)\times \Omega$ where $\Omega$ is a relatively compact set of $\mbr^3$ (so that $\mcv$ is a relatively compact set of $\overline\mcm$). {\em Throughout the paper, we assume that  $\sigma, k$ and $f$ are supported in $\mcv$}.  Our first result is  
\begin{theorem}\label{thm-main1}
Let $\sigma \in C^6$ be independent of the  $x$ and $\theta$ variable. There exists an open dense subset $\mcu$ of $C^6(\mcv\times \mbs^2\times\mbs^2)$ such that the following is true.  Consider the source problem \eqref{eq-boltz} and \eqref{eq-boltz1} with $k\in \mcu$ and $f\in H_{\comp}^2(\mcm)$. Then $f$ is uniquely determined by $u_T$ in \eqref{eq-ut}. Moreover,  we have the following stability estimate
\beqq\label{eq-thmsta}
\|\phi(D) f\|_{H^2(\mcm)} \leq C \|u_T\|_{H^{5/2}(\mbr^3 \times \mbs^2)}
\eeqq 
for some $C>0$ depending on $\sigma, k.$
\end{theorem}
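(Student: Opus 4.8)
The plan is to reduce the inverse source problem to a statement about the light ray transform, exploiting the fact that the source $f=f(t,x)$ is independent of $\theta$. First I would set up the forward problem carefully: writing the Boltzmann operator as $P = \p_t + \theta\cdot\nabla_x + \sigma - K$ where $K$ is the scattering integral operator, one solves $Pu = f$ with $u(0,\cdot)=0$. Because $\sigma$ depends only on $t$ and $K$ is a compact (smoothing in $\theta$) perturbation, I expect the solution operator to be constructed by a convergent Neumann/Born series: $u = \sum_{j\ge 0} u_j$ where $u_0$ solves the pure transport equation $(\p_t+\theta\cdot\nabla_x+\sigma)u_0 = f$ and $u_{j+1}$ solves the same transport equation with right-hand side $Ku_j$. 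Each term is given by integration along the straight characteristics $s\mapsto (s, x+(s-t)\theta)$ with the absorption weight $\exp(-\int_s^t \sigma\,d\tau)$, which for $x$-independent $\sigma$ is an explicit scalar factor. The key point is that $u_T(x,\theta)$ is then $u_0(T,x,\theta)$, a weighted light ray transform of $f$ over the backward light ray through $(T,x)$ in direction $\theta$, plus a remainder $\sum_{j\ge1}u_j(T,x,\theta)$ built from $K$ and lower-order light ray transforms.

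Next I would invert the leading term. The map $f\mapsto u_0(T,\cdot,\cdot)$ is (a weighted version of) the light ray transform restricted to lines meeting the Cauchy surface $\{t=T\}$, which is exactly the transform analyzed by Vasy and the author in \cite{VaWa}. The normal operator $N = L^* L$ (with an appropriate choice of $L$ incorporating the absorption weight and a cutoff to $\mcv$) is a pseudodifferential operator of order $-1$ whose principal symbol is elliptic precisely on the space-like frequencies $\Gamma^{sp}$; this is where the Fourier multiplier $\phi(D)$ in the statement enters. Applying a microlocal parametrix for $N$ on $\WF$ contained in $\Gamma^{sp}$ yields $\|\phi(D)f\|_{H^2} \le C(\|N f\|_{H^3} + \|f\|_{H^1})$ or, tracking the trace regularity, $C\|u_0(T,\cdot,\cdot)\|_{H^{5/2}}$ after accounting for the half-derivative loss in restricting to $t=T$ and for the fact that $N$ gains one derivative. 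The compact-support assumption on $f$ together with $f\in H^2_{\comp}$ makes this a genuine (a priori) estimate rather than just an ellipticity statement.

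The scattering contribution is handled as a perturbation and this is where the genericity hypothesis $k\in\mcu$ is used. The remainder $R f \defeq \sum_{j\ge1} u_j(T,\cdot,\cdot)$ is a sum of compositions of $K$ with transport solution operators; each such composition is more regularizing in $\theta$ and, crucially, does not ``see'' the space-like frequencies in the same elliptic way, so one expects $Rf$ to be compact relative to the leading term, or at least lower order in the relevant scale. Thus $u_T = L_0 f + R f$ with $L_0$ the weighted light ray transform operator and $R$ compact; injectivity of $f\mapsto u_T$ then follows from injectivity of $L_0$ on $\phi(D)$-frequencies plus a Fredholm argument, provided one rules out a nontrivial kernel. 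The open dense set $\mcu\subset C^6$ is defined as the set of kernels $k$ for which the relevant finite-rank/analytic-Fredholm obstruction vanishes: one shows the ``bad set'' is the zero set of a real-analytic function on the (Banach) space of kernels that is not identically zero (e.g.\ it fails for $k=0$, the pure transport case, which is injective by \cite{VaWa}), hence its complement is open and dense. Combining injectivity with the a priori estimate for $L_0$ and absorbing $R$ gives the stability bound \eqref{eq-thmsta}.

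I expect the main obstacle to be making the perturbation argument quantitative enough to yield the \emph{stability} estimate rather than mere uniqueness. Controlling $Rf$ requires precise mapping properties of the iterated transport-plus-scattering operators on anisotropic Sobolev spaces in $(t,x,\theta)$, and one must verify that the half-derivative trace loss and the order of the normal operator combine to give exactly the $H^{5/2}$ norm on $u_T$ against $H^2$ on $\phi(D)f$; a secondary difficulty is verifying that the genericity condition defining $\mcu$ is indeed cut out by a nonzero real-analytic function, which needs the solution operator to depend analytically on $k$ and an explicit nonvanishing test case. The construction of $\mcu$ and the Fredholm/analytic-continuation step is the conceptual heart of the theorem; the light ray transform inversion is essentially imported from \cite{VaWa}.
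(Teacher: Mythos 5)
Your high-level strategy (data map $=$ weighted light ray transform $+$ scattering remainder, normal operator, the multiplier $\phi(D)$, compactness of the remainder, genericity via analytic dependence on $k$) is the same as the paper's, but as written the argument does not close at two essential points. First, your inversion of the leading term via a \emph{microlocal parametrix} produces an estimate of the form $\|\phi(D)f\|_{H^2}\le C(\|Nf\|_{H^3}+\|f\|_{H^1})$, with an error in the norm of the \emph{full} $f$. That error cannot be absorbed: the data $u_T$ only depends on $\phi(D)(\kappa f)$ (the transform annihilates time-like frequencies), so no measured quantity controls $\|f\|_{H^1}$, and the usual compactness--uniqueness removal fails because uniqueness over time-like frequencies is only available through analyticity of $\widehat{\kappa f}$, which is not quantitative. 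The paper avoids this entirely by exploiting that in $3+1$ dimensions $N=L^*L$ is an \emph{exact} Fourier multiplier $c\,\phi(\tau,\xi)/|\xi|$, hence exactly invertible by $Q$ on the projected space $\msh^2=\phi(D)H^2$, and that the whole data map factors as $Xf=L\phi(D)\kappa f+E\phi(D)\kappa f$ (the weight multiplies $f$ since $\sigma=\sigma(t)$; if you fold the weight into $L$ as you suggest, the multiplier structure is lost). This yields an equation of the form $(\id+Q\kappa^{-1}L^*E)\,\phi(D)\kappa f=QL^*u_T$ purely on $\msh^2$, i.e.\ identity plus compact with no residual $\|f\|$ term, which is what makes both the Fredholm argument and the stability estimate possible. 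Relatedly, your Fredholm/genericity step is anchored incorrectly: injectivity of the pure transport problem at $k=0$ (from \cite{VaWa}) is not a stable statement and cannot serve as the invertibility input for a compact-perturbation argument (compact perturbations preserve the index, not injectivity); what is actually used is that at $\la=0$ the projected operator is the identity, and then the analytic Fredholm theorem in $\la$ for the family obtained by replacing $k$ with $\la k$ gives invertibility off a discrete set, whence openness and density in $C^6$. Your ``zero set of a real-analytic function on the Banach space of kernels'' formulation would need this one-parameter reduction anyway to be made precise.

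Second, you never address how $f$ itself (as opposed to $\phi(D)f$) is uniquely determined, which is a genuinely separate step: since the data see only $\phi(D)(\kappa f)$, the paper concludes $\mcf(\kappa f)=0$ on the open cone $\Gamma^{sp}$ and then invokes compact support of $\kappa f$ (Paley--Wiener analyticity of the Fourier transform) to get $\kappa f\equiv 0$, hence $f=0$. Without this analyticity argument your scheme proves at best generic recovery of $\phi(D)f$, not the uniqueness claim of the theorem. Finally, the compactness of the scattering remainder is asserted heuristically; in the paper it is the technical core of the proof (spherical harmonics expansion of $k$ as in Stefanov--Uhlmann, the singular integral estimates of Propositions \ref{prop-sinest}--\ref{prop-sinest1}, and a Fourier series in $t$ to trade $t$-regularity for $x$-regularity), and it is exactly where the $C^6$ hypothesis on $k$ and the $H^2$ hypothesis on $f$ are consumed, so it cannot be waved through if the quantitative estimate \eqref{eq-thmsta} is the goal.
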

The type of stability estimate \eqref{eq-thmsta} seems to be new and it is particularly important for our analysis. In fact, we will use the stability estimate to recover $\phi(D)f$ then use the analyticity of the Fourier transform of $f$ to prove the uniqueness.  

Next, for the CMB inverse problem, the metric perturbations that describe the evolution of the universe are not arbitrary. In fact, they are solutions of the linearized Einstein equations, see Section \ref{sec-cmb}. This leads us to study the inverse problem  of \eqref{eq-boltz} when the source $f$ is a solution of certain wave equations. For $s\geq 0$, we denote $\mcm_s = \{s\}\times \mbr^n$. Consider  
\beqq\label{eq-hyper}
P(z, \p) = \square + \sum_{j = 0}^n A_j(z)\p_j + B(z)
\eeqq
where $A_j, B$ are real or complex valued smooth functions in $z$.  Consider the Cauchy problem
\beqq\label{eq-cauchy}
\begin{gathered}
P(z, \p)f(z) = 0, \text{ on } \mcm   \\
u = f_1, \quad \p_t u = f_2 \text{ on } \mcm_0 
\end{gathered}
\eeqq  
Our second result is the stable determination of $f$ from $u_T.$ To avoid some technical issues, we will replace the source term $f$ in \eqref{eq-boltz} by $\chi_0 f$ where $\chi_0$ be a smooth cut-off function in $C_0^\infty((0, T))$ not identically vanishing. For some applications, it might be preferable to take $\chi_0$ as the characteristic function of $(0, T)$ in $\mbr$. However, by the well-posedness of the linear Boltzmann equation, the difference of $u_T$ can be made arbitrarily small in a proper sense. 
Below, we take $\mcv$ sufficiently large so that the solution of \eqref{eq-cauchy} with initial data supported in a fixe compact set $\mcx$ of $\mbr^3$ is contained in $\mcv.$
 \begin{theorem}\label{thm-main2}
Let $f$ be the solution of \eqref{eq-cauchy} on $\mcm$ with Cauchy data $f_1 \in H^{2}(\mcm_0), f_2\in H^{1}(\mcm_0)$ supported in a compact set $\mcx$ of $\mcm_0$ such that $f$ is supported in $\mcv.$ Suppose that   the coefficients $A_j(z)$ in \eqref{eq-hyper} are real valued smooth functions. 
Let $u$ be the solution of \eqref{eq-boltz}, \eqref{eq-boltz1} with source $\chi_0 f$. 
Then there exists an open dense set $\mcu$ of $C^\infty(\mcv\times \mbs^2)\times C^6(\mcv\times \mbs^2\times \mbs^2)$ such that for $(\sigma, k)\in \mcu$, $f_1, f_2$ is uniquely determined by $u_T$ and there exists $C >0$ such that 
\beqq\label{eq-thm2sta}
\|f\|_{H^{2}(\mcm)} \leq C \|(f_1, f_2)\|_{H^{2}(\mbr^3)\times H^{1}(\mbr^3)}\leq C \|u_T\|_{H^{5/2}(\mbr^3\times \mbs^2)}
\eeqq
\end{theorem}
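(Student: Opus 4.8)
The plan is to reduce the statement to Theorem~\ref{thm-main1}, exploit the rigidity coming from the fact that $f$ solves the hyperbolic Cauchy problem \eqref{eq-cauchy}, and then run a compactness--uniqueness argument to upgrade the resulting conditional bound to the Lipschitz estimate \eqref{eq-thm2sta}. The first inequality $\|f\|_{H^2(\mcm)}\le C\|(f_1,f_2)\|_{H^2\times H^1}$ is the standard energy estimate for the strictly hyperbolic operator $P$ of \eqref{eq-hyper}: since the $A_j$ are real, $P$ carries the usual $H^{k+1}\times H^{k}$ energy identities, so \eqref{eq-cauchy} is well posed in both time directions, $f\in C([0,T];H^2)\cap C^1([0,T];H^1)\subset H^2(\mcm)$, and finite speed of propagation keeps $\supp f\subset\mcv$. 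In particular $\chi_0 f\in H^2_{\comp}(\mcm)$ with $\supp(\chi_0 f)\subset\mcv$, so Theorem~\ref{thm-main1} — more precisely its proof, which is insensitive to $\theta$-dependence of $\sigma$ — applies to \eqref{eq-boltz} with source $\chi_0 f$ and gives, for $(\sigma,k)$ in an open dense set, $\|\phi(D)(\chi_0 f)\|_{H^2(\mcm)}\le C\|u_T\|_{H^{5/2}(\mbr^3\times\mbs^2)}$. It then remains to prove $\|(f_1,f_2)\|_{H^2\times H^1}\le C\|\phi(D)(\chi_0 f)\|_{H^2(\mcm)}$.

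For this I would first exploit analyticity. Since $\chi_0\in C_0^\infty((0,T))$ and $f$ solves \eqref{eq-cauchy}, one has $\square(\chi_0 f)=h$, where $h\in H^1_{\comp}(\mcm)$ is a first-order expression in $f$ (arising from commuting $\square$ past $\chi_0$ and from the lower-order part of $P$) with $\|h\|_{H^1(\mcm)}\le C\|f\|_{H^2(\mcm)}$. In spacetime Fourier variables this reads $(|\xi|^2-\tau^2)\widehat{\chi_0 f}=\hat h$, and $\hat h$ is real-analytic on $\mbr^4$ by Paley--Wiener. Knowing $\phi(D)(\chi_0 f)$ means knowing $\widehat{\chi_0 f}$, hence $\hat h$, on the nonempty open cone $\Gamma^{sp}$, so analytic continuation recovers $\hat h$, hence $h$, on all of $\mbr^4$; then $\chi_0 f$ is recovered as the unique compactly-$t$-supported solution of $\square v=h$ (if $\square v=0$ and $v$ has compact support in $t$, then $\hat v\in L^2$ is supported on $\{\tau^2=|\xi|^2\}$, whence $v=0$). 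Thus $\phi(D)(\chi_0 f)$ determines $f$ on the open time-slab $\{\chi_0\neq0\}$, and solving \eqref{eq-cauchy} backward from a time in that slab determines $(f_1,f_2)$; in particular $\phi(D)(\chi_0 f)=0$ forces $(f_1,f_2)=0$, so the forward map $F:(f_1,f_2)\mapsto u_T$ is injective for $(\sigma,k)$ in the open dense set.

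Since this injectivity rests on analytic continuation and is a priori only conditionally stable, I would complement it with a coercivity estimate modulo compact operators,
\[
\|(f_1,f_2)\|_{H^2\times H^1}\ \le\ C\|\phi(D)(\chi_0 f)\|_{H^2(\mcm)}+C\|(f_1,f_2)\|_{H^{3/2}\times H^{1/2}} .
\]
Its content is that for solutions of \eqref{eq-cauchy} the space-like frequency part of $\chi_0 f$ controls the Cauchy data up to lower order: writing $f$ via the half-wave propagators, $\widehat{\chi_0 f}(\tau,\xi)$ is, modulo lower order, a superposition of $\hat\chi_0(\tau\mp|\xi|)$ times $\hat f_1(\xi)$ and $|\xi|^{-1}\hat f_2(\xi)$, hence concentrated in an $O(1)$ neighbourhood of the light cone $\tau^2=|\xi|^2$, which straddles $\Gamma^{sp}$; at high frequency $\|\chi_0 f\|_{H^2(\mcm)}^2\simeq\|f_1\|_{H^2}^2+\|f_2\|_{H^1}^2$ and $\phi(D)$ retains an elliptic part of it, while low frequencies give the compact remainder. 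Granting this, $F$ is a bounded linear map on the Hilbert space of Cauchy data supported in the fixed set $\mcx$, the two displayed inequalities yield $\|(f_1,f_2)\|_{H^2\times H^1}\le C\|F(f_1,f_2)\|_{H^{5/2}}+C\|(f_1,f_2)\|_{H^{3/2}\times H^{1/2}}$, and $F$ is injective; the standard contradiction argument (a unit-norm sequence with $u_T^{(n)}\to0$; a Rellich-convergent subsequence in $H^{3/2}\times H^{1/2}$; convergence in $H^2\times H^1$ by the coercivity-modulo-compact estimate to a nonzero element of $\ker F$, using well-posedness of \eqref{eq-boltz}, \eqref{eq-boltz1} to pass to the limit in the Boltzmann solutions and their traces at $t=T$) then gives \eqref{eq-thm2sta}.

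The main obstacle is the coercivity-modulo-compact estimate of the third step: one must make precise and uniform in frequency the heuristic about concentration near the characteristic cone of $\square$ — in effect that $(f_1,f_2)\mapsto\phi(D)(\chi_0 f)$ is elliptic modulo smoothing, a microlocal statement about the half-wave propagators composed with multiplication by $\chi_0$ and truncation to $\Gamma^{sp}$ — and one must handle the low-frequency regime where $\Gamma^{sp}$ and the light cone degenerate at the origin. Keeping the Sobolev indices consistent is also delicate: the half-derivative losses in the trace theorem and in passing between $u_T$, $\chi_0 f$ and $(f_1,f_2)$ are precisely what the wave equation — through the identity $\|\chi_0 f\|_{H^2(\mcm)}\simeq\|(f_1,f_2)\|_{H^2\times H^1}$ — is invoked to circumvent.
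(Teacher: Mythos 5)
There is a genuine gap, and it occurs at the very first reduction. You invoke Theorem \ref{thm-main1} (``its proof is insensitive to $\theta$-dependence of $\sigma$'') to get $\|\phi(D)(\chi_0 f)\|_{H^2(\mcm)}\le C\|u_T\|_{H^{5/2}}$, but Theorem \ref{thm-main2} allows $\sigma\in C^\infty(\mcv\times\mbs^2)$, i.e.\ $\sigma=\sigma(t,x,\theta)$, whereas Theorem \ref{thm-main1} and its proof require $\sigma$ independent of \emph{both} $x$ and $\theta$: that hypothesis is exactly what makes the weight $\kappa(s)=e^{-\int_s^T\sigma}$ a function of $s$ alone, so that $\rho_T T_1^{-1}f=L(\kappa f)$ is the \emph{unweighted} Minkowski light ray transform of $\kappa f$, Proposition \ref{prop-cut} applies ($L(\kappa f)=L(\phi(D)\kappa f)$), and the normal operator $N=L^*L$ is the explicit Fourier multiplier inverted by $Q$. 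If $\sigma$ depends on $x$ or $\theta$, then $\rho_T T_1^{-1}$ is a genuinely weighted transform $L_\kappa$ with $\kappa=\kappa(t,x,s,\theta)$; its kernel is not described by $\phi(D)$, $L_\kappa f\ne L_\kappa\phi(D)f$ in general, and $N_\kappa$ is no longer a Fourier multiplier. The paper states this obstruction explicitly at the start of Section \ref{sec-lraywei} and for that reason abandons the $\phi(D)$ route entirely, instead analyzing $N_\kappa$ as a paired Lagrangian distribution, composing it with the Duistermaat--H\"ormander parametrix $E_1^\pm,E_2^\pm$ of the Cauchy problem via clean FIO calculus (Lemmas \ref{lm-comp3}--\ref{lm-comp4}), and running an analytic Fredholm argument in $\la$ with $(\sigma,k)\mapsto(\la\sigma,\la k)$, whose $\la=0$ point is controlled by the injectivity of the unweighted transform on wave solutions. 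Your reduction could at best cover $\sigma=\sigma(t)$, which is nowhere dense in $C^\infty(\mcv\times\mbs^2)$, so it cannot produce the open dense set $\mcu$ of pairs $(\sigma,k)$ asserted in the theorem.

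Even setting that aside, the step you yourself flag as ``the main obstacle'' --- the coercivity-modulo-compact estimate, i.e.\ that $(f_1,f_2)\mapsto\phi(D)(\chi_0 f)$ is elliptic modulo compact --- is the analytic heart of the theorem and is not proved; your heuristic uses the constant-coefficient half-wave representation $\hat\chi_0(\tau\mp|\xi|)\hat f_\pm(\xi)$, which is unavailable for the variable-coefficient operator \eqref{eq-hyper}, where $E_1,E_2$ are FIOs associated to $C_{wv}$. Moreover the sharp cutoff $\phi$ is discontinuous precisely across the light cone, which is where the Lagrangians $\La_\pm=(C_{wv}^\pm)'$ live, so the composition $\phi(D)E_k^\pm$ is exactly the delicate object; the paper circumvents this by composing with $\chi N_\kappa\chi_0$ (with $\chi\chi_0=0$, so the kernel lies in $I^{-3/2}(\La_1)$ away from the diagonal) and showing $E_j^{\pm,*}\chi N_\kappa\chi_0 E_k^\pm$ is an elliptic pseudodifferential operator, which is what actually delivers the parametrix for $(f_1,f_2)$ and, after the compactness and analytic Fredholm steps, the Lipschitz estimate \eqref{eq-thm2sta}. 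Your first inequality (the energy estimate) and the analytic-continuation uniqueness observation are fine, but they do not bridge these two gaps.
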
 

In Section \ref{sec-pf2}, we will  prove a stronger version of the theorem to include certain pseudo-differential operators which are motivated by the CMB inverse problem, see Section \ref{sec-cmb}. For $\sigma = k = 0$, Theorem \ref{thm-main2} was proved in \cite{VaWa} and further generalized in \cite{Wan2}.   

In the literature, there are some work on stability of the radiative transport equations based on the method of Carleman estimates, see \cite{Kli, MaYa}. For our problem, it seems  natural to follow the spirit in Stefanov and Uhlmann \cite{StUh} for the stationary transport equation to treat the map $f\rightarrow u_T$ as a perturbation of the light ray transform on the Minkowski spacetime. The difficulty is that, unlike the geodesic ray transform in the Riemmanian setting, the normal operator of the light ray transform is not an elliptic pseudo-differential operator. In fact, the Schwartz kernel belongs to the class of paired Lagrangian distributions, see \cite{Wan1}. The key of our approach is to restore the ellipticity by using either $\phi(D)$ or the parametrix of the Cauchy problem.

We have a few remarks. First, our results should hold for general dimensions, however we study $\mbr^{3+1}$ for its physical relevance.   
Second, as we assume that $f$ is compactly supported in $[0, T]\times \Omega$, one can consider the problem with measurements  on the lateral boundary $[0, T]\times \p \Omega$ using the method we develop here. The problem then is the time-dependent version of the inverse source problem studied in  \cite{StUh}.  Finally, the stability estimates  suggest  that our results can be generalized via  perturbation arguments to other scenarios such as small metric perturbations of the Minkowski spacetime as in \cite{VaWa}, small perturbations of $\sigma$ for Theorem \ref{thm-main1} and nonlinear perturbations in the Boltzmann equation.

The paper is organized as follows. We begin with a discussion of the CMB kinetic theory in Section \ref{sec-cmb} and derive the inverse source problem. In Section \ref{sec-direct}, we prove the solvability of the forward problem via the Fredholm theory. We prove Theorem \ref{thm-main1} in Section \ref{sec-pfthm1} after collecting some results of the Minkowski light ray transform in Section \ref{sec-lray}. The last three sections are devoted to the proof of Theorem \ref{thm-main2}. We first study the light ray transform with weights in Section \ref{sec-lraywei}. Then we analyze the action of the transform on solutions of the Cauchy problem in Section \ref{sec-comp}. Finally, we prove Theorem \ref{thm-main2} in Section \ref{sec-pf2}.

{\em Acknowledgement:} This work is supported by National Science Foundation under grant DMS-2205266.

\section{The CMB kinetic theory}\label{sec-cmb}
In this section, we discuss the inverse problem of determining primordial perturbations from the anisotropies of CMB. Our goal is to show how the source problem for the Boltzmann equation naturally appears and how the source term is connected to the metric perturbations.  We will consider a simple setup,  emphasizing more on the mathematical structure of the problem. The perturbation theory for CMB anisotropies has been well-developed in cosmology literatures, which can be found in \cite{Dod, Dur} for instance. 

\begin{figure}[htbp]
\centering
\vspace{-5cm}
\includegraphics[scale = 0.55]{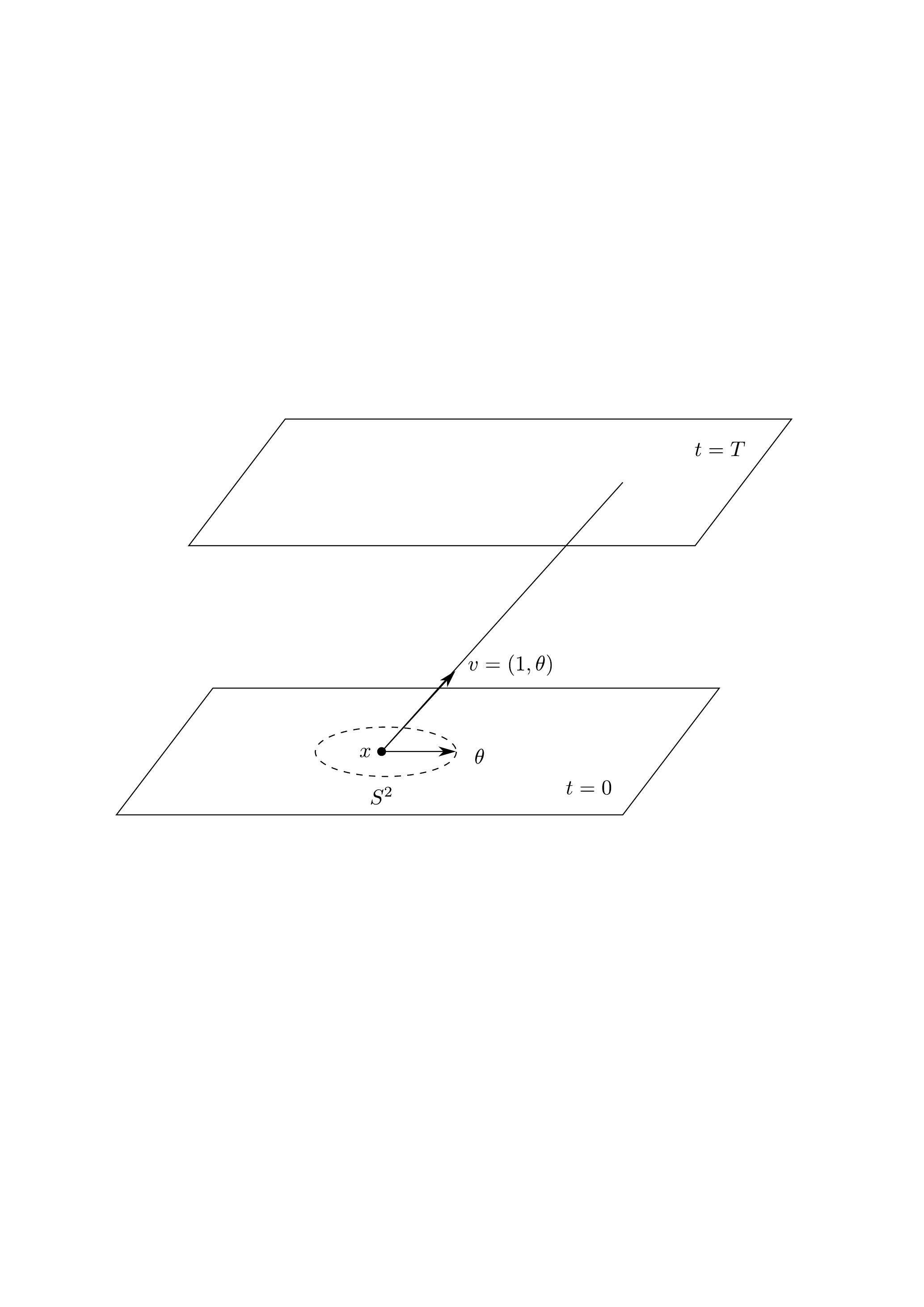}
\vspace{-6cm}
\caption{Parametrization of the light rays}
\label{fig-lray}
\end{figure}

Consider the FLRW spacetime $(\mcm, g)$ as the Universe model, where $\mcm = [0, \infty)\times \mbr^3$ and $g = -dt^2 + a^2(t) dx^2$ with $a>0.$ Note that $(\mcm, g)$ is conformal to the Minkowski spacetime and the conformal transformation does not change much of the analysis. So we work with the Minkowski spacetime $(\mcm, g)$ below (by simply taking $a = 1$). Let $\Phi, \Psi \in C^\infty(\mcm)$. For $\eps>0$ small, we consider a smooth family of Lorentzian metrics on $\mcm$
\beqq\label{eq-metric}
g_\eps = (-1 + \eps \Phi)dt^2 + (1 + \eps \Psi)dx^2 + \eps^2 h_\eps
\eeqq
where $h_\eps$ is a family of symmetric two tensors on $\mcm$ smooth for $\eps\in [0, \eps_0), \eps_0 >0.$ Later, we also use $z = (z_0, z_1, z_2, z_3) = (t, x_1, x_2, x_3)$ for local coordinates. Note that $g_0$ is the Minkowski metric, and we regard $(\mcm, g_\eps)$ as the perturbation of the Minkowski spacetime.  

Consider light-like geodesics $\gamma_\eps(s), s \geq 0$ on $(\mcm, g_\eps)$ originating  from $\mcm_0 = \{0\}\times \mbr^3$ which we think of as photon trajectories. They satisfy the geodesic equation
\beqq\label{eq-geod}
\ddot \gamma_\eps^k(s) + \Gamma_{\eps, ij}^k(s) \dot \gamma_\eps^i(s) \dot \gamma_\eps^j(s) = 0
\eeqq
with initial conditions
\beq
\gamma_{\eps}(0) =  \tilde z, \quad \dot \gamma_{\eps}(0) = \tilde \zeta.
\eeq
Here $\Gamma_{\eps, ij}^k(s)$ denotes the Christoffel symbols for $g_\eps$ along $\gamma_\eps(s)$.  Let $p_\eps^i(s) = \dot \gamma_\eps^i(s), i = 1, 2, 3$ be the momentum and $p_\eps^0(s) = \dot \gamma_\eps^0(s)$ be the energy of the photon. 
In particular, $p_\eps = (p^i_\eps)_{i = 0}^3$ is a  vector field along $\gamma_\eps.$ As we consider light-like geodesics for (massless) photons, $p_\eps$ are (future pointing) null vectors, namely $g_\eps(p_\eps, p_\eps) = 0$ 
along $\gamma_\eps$.   
It is convenient to denote $v = p_\eps^0 >0$ the energy and  $\theta^i = p_\eps^i/p_\eps^0, i = 1, 2, 3$.  In particular, we have $p_\eps = v(1, \theta)$.

Now let $f_\eps$ be the photon distribution function which  is  a function of $z, p$ variables where $z\in \mbr^{3+1}$ and $p$ is on the mass shell 
\beq
\Sigma_z = \{p \in T_z\mbr^{3+1}: g_\eps(p, p) = 0\}.
\eeq 
 We assume that $f_\eps$ satisfies the linear Boltzmann equation, see \cite[Section 4.5]{Dur}. This means that along $\gamma_\eps$ 
  \beqq\label{eq-bol}
 \frac{d }{ds}f_\eps(\gamma_\eps(s), p_\eps(s)) = C[f_\eps]
 \eeqq
 where $C[f]$ denotes the interaction term  
 \beqq\label{eq-intera}
 C[f] =- \sigma(z) f(z, p) + \int k(z, \theta, \theta') f(z, v(1, \theta'))d\theta'
 \eeqq
 where $\sigma$ denotes absorption coefficients, $k$ is the scattering kernel and the integration is over $\{\theta: v(1, \theta) \in \Sigma_z \text{ for } v > 0\}$. 
 The terms  in \eqref{eq-intera} accounts for photon interactions in Thomson scattering for example. We get from \eqref{eq-bol} and \eqref{eq-intera} the equation 
 \beqq\label{eq-boltz0}
 \begin{gathered}
  \sum_{i = 0}^3 \frac{\p f_\eps}{\p z^i}(z, p) \frac{\p \gamma^i_\eps}{\p s} +   \frac{\p f_\eps}{\p p}(z, p) \frac{\p p_\eps}{\p s} 
 = - \sigma(z) f_\eps(z, p) + \int k(z, \theta, \theta') f_\eps(z, v(1, \theta'))d\theta'
 \end{gathered}
 \eeqq
 Now we consider  $f_\eps$ as a perturbation of some background distribution with an expansion 
 \beqq\label{eq-asymp}
 f_\eps(z , p) = f_0(v) + \eps f_1(z, v, \theta) + O(\eps^2) 
 \eeqq
 Here, $f_0$ is the background photon distribution. When modeling the cosmic microwave background, one often assumes that  $f_0$ satisfies the Planck distribution  
 \beq
 f_0(v) = (e^{v/T_0} + 1)^{-1} 
 \eeq
 see page 149 of \cite{Dur}. Here, $T_0 >0$ be the background temperature of the universe.  $f_1$ in \eqref{eq-asymp} is the first order perturbation term and $\theta$ is taken over $\mbs^2$. In particular, $(1, \theta)$ is a future pointing light-like vector for the background Minkowski metric $g_0.$

 We find the $\eps$ derivative of the equation \eqref{eq-boltz0} at $\eps = 0$. 
  \beqq\label{eq-boltznew1}
  \begin{gathered}
 v \frac{\p f_1}{\p t}(z, v, \theta)  
  + v \sum_{j = 1}^3 \theta_j \frac{\p f_1}{\p z^j}(z, v, \theta) + \frac{\p f_1}{\p p_0^j} (z, v, \theta)\frac{\p p^j_0}{\p s}   \\
  +  \frac{\p f_0}{\p v}(v)   \p_\eps\frac{\p p^0_\eps}{\p s}|_{\eps = 0}   
   = - \sigma(z)  f_1(z, v, \theta)   + \int_{\mbs^2} k(z, \theta, \theta') f_1(z, v, \theta')d\theta'
  \end{gathered}
 \eeqq
 Here, $p_0(s) = \dot \gamma_0(s)$ is the vector filed along the geodesic $\gamma_0$ for the background metric. We observe that $\p_s p^j_0(s) = 0$ for $j = 0, 1, 2, 3$ which can be seen from the geodesic equation on $(\mbr^{3+1}, g_0)$ and the fact that $g_0$ is a constant metric.  
It remains to find $\p_\eps \frac{\p p^0_\eps}{\p s}|_{\eps = 0}$ in \eqref{eq-boltz1}. We use the geodesic equation \eqref{eq-geod} on $(\mbr^{3+1}, g_\eps)$
 \beq
 \dot p_\eps^0(s) + \Gamma_{\eps, ij}^0(\gamma_\eps(s)) p_\eps^i(s) p_\eps^j(s)  = 0
 \eeq
 thus 
 \beq
 \begin{gathered}
 \p_\eps (\frac{\p p_\eps^0}{\p s})|_{\eps = 0} = \p_\eps\Gamma_{\eps, ij}^0|_{\eps = 0} p^i_0p^j_0
 \end{gathered}
 \eeq
 In the calculation, we used the fact that the Christoffel symbols for the Minkowski spacetime all vanish. To find the linearization of the Christoffel symbol, recall that 
 \beq
 \Gamma_{jk}^i  = \ha g^{i\mu} (\frac{\p g_{\mu k}}{\p z^j} - \frac{\p g_{j k}}{\p z^\mu}  + \frac{\p g_{j \mu}}{\p z^k} )
 \eeq
 Therefore,
 \beq
 \begin{gathered}
 \p_\eps \Gamma_{\eps, ij}^0|_{\eps = 0}   =0 \text{ if $i\neq j$, and $i, j\neq 0$}\\
 \p_\eps \Gamma_{\eps, jj}^0|_{\eps = 0} = +\ha \frac{\p \Psi}{\p z^0}  \text{ if $j \neq 0$} \text{ and } 
   \p_\eps \Gamma_{\eps, 0j}^0|_{\eps = 0} =  -\ha \frac{\p \Phi}{\p z^j}  
 \end{gathered}
 \eeq
We deduce that 
  \beq
  \begin{gathered}
\p_\eps\frac{\p p^0_\eps}{\p s}|_{\eps = 0}    =   \ha \frac{\p \Psi}{\p z^0}  p^j_0p^j_0  - \ha \frac{\p \Phi}{\p z^j} p^0_0p^j_0
\end{gathered}
  \eeq
Using this in \eqref{eq-boltznew1} we get
  \beqq\label{eq-boltz2} 
  \begin{gathered}
 v\frac{\p f_1}{\p t}(z, v, \theta)  
  +  v\theta^j \frac{\p f_1}{\p x^j}(z, v, \theta)     +  \sigma(z)  f_1(z, v, \theta)   \\
  - \int_{\mbs^2} k(z, \theta, \theta') f_1(z, v, \theta')d\theta' = -  \frac{\p f_0}{\p v}(v) v  ( \ha \frac{\p \Psi}{\p t}  v - \ha \frac{\p \Phi}{\p z^j} v\theta^j)
  \end{gathered}
 \eeqq 
At this point, we will take $v >0$ to be a fixed constant and ignore it in $f_1$. We get from \eqref{eq-boltz2} that 
     \beqq\label{eq-boltz3}
  \begin{gathered}
 \frac{ \p f_1}{\p t}(z, \theta)  
  +  \theta^j \frac{\p f_1}{\p z^j}(z, \theta)     +  \sigma(z)  f_1(z, \theta)   - \int_{\mbs^2} k(z, \theta, \theta') f_1(z, \theta')d\theta'\\
= C ( \ha \frac{\p \Psi}{\p t}  - \ha \frac{\p \Phi}{\p z^j} \theta^j)
  \end{gathered}
 \eeqq
 where $C$ is a non-zero constant and $\sigma, k$ are changed by a scalar factor. This is essentially the Boltzmann equation we considered in the introduction, and the inverse problem is to determine $\Phi, \Psi$ from the observation of $f_1$ at $t = T.$  
 We remark that in cosmology literatures, one integrates \eqref{eq-boltz2} in $v$ and obtain an equation for a quantity independent of $v$. The quantity is related to the temperature perturbation or the redshift of the CMB, see Section 4.5.1 of \cite{Dur} for instance. However, the mathematical structure of the so-derived equation is identical to \eqref{eq-boltz3}.  Actually, it is more convenient to change $f_1$ in \eqref{eq-boltz3} to $\tilde f_1 = f_1 + \ha C\Phi$. Then we find from \eqref{eq-boltz3} that 
      \beqq\label{eq-boltz4}
  \begin{gathered}
 \frac{ \p \tilde f_1}{\p t}(z, \theta)  
  +  \theta^j \frac{\p \tilde f_1}{\p z^j}(z, \theta)     +  \sigma(z)  \tilde f_1(z, \theta)   - \int_{\mbs^2} k(z, \theta, \theta') \tilde f_1(z, \theta')d\theta'\\
= C ( \ha \frac{\p \Psi}{\p t}  + \ha \frac{\p \Phi}{\p t} + B(z)\Phi )
  \end{gathered}
 \eeqq
 where $B(z)$ is a smooth function of $z$ depending on $\sigma, k.$ When $\sigma = k = 0$, $B(z) = 0.$ The difference of $f_1$ and $\tilde f_1$ is independent of the direction and is in fact not measurable from CMB anisotropies, see \cite[Chapter 4]{Dur}.

Finally, let's consider the metric perturbations  $\Phi, \Psi$ in \eqref{eq-metric}. When modeling the evolution of the universe, one assumes that $g_\eps$ satisfies the Einstein equations with matters. In case of scalar fields matter, the linearized term $\Phi, \Psi$ are known to be equal and satisfy the Bardeen's equation, which is of the form 
\beqq\label{eq-bar}
\p_t^2 \Psi  +  A_0(t) \p_t \Psi +  \lap \Psi  + B_0(t)\Psi = 0. 
\eeqq 
See equation (6.48) of \cite{MFB}. Here, $A_0(t), B_0(t)$ are smooth functions and $\lap$ is the Laplacian on $\mbr^3$. The inverse problem now is to determine $\Psi$ in $\mcm$ satisfying \eqref{eq-bar} with measurement of $\tilde f_1$ of \eqref{eq-boltz4} at $t = T$. Note that this problem fits Theorem \ref{thm-main2} except that the source term in \eqref{eq-boltz4} involves an extra differential operator. This will be treated in the stronger version Theorem \ref{thm-main3} of Theorem \ref{thm-main2} in Section \ref{sec-pf2}.

\section{Solvability of the direct problem}\label{sec-direct}
For the solvability of the source problem, we will follow  \cite{StUh} to give a proof based on the analytic Fredholm theory. Compared with the results for the stationary transport equation in \cite{StUh}, we need higher regularity requirements for $\sigma, k$ and $f$. We assume $n\geq 2$ in this section.

\begin{theorem}\label{thm-exist}
For $\sigma\in C^5(\mcv)$, there exists an open and dense subset $\msu$ of $C^5(\mcv \times \mbs^{n-1}\times \mbs^{n-1})$ such that for any $k \in  \msu$ and $f\in H_{\comp}^2(\mcm)$, the equation \eqref{eq-boltz} with initial condition \eqref{eq-boltz1} has a unique solution $u\in H^2(\mcm \times \mbs^{n-1}).$
\end{theorem}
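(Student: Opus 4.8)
The plan is to solve \eqref{eq-boltz}--\eqref{eq-boltz1} by first handling the pure transport part with absorption and then treating the scattering integral operator as a perturbation via Fredholm theory, exactly in the spirit of \cite{StUh} but keeping careful track of Sobolev regularity. First I would introduce the transport operator $T_0 u = \p_t u + \theta\cdot\nabla_x u + \sigma u$ and write its solution operator. Because $\sigma$ depends only on $t$ (or at worst lies in $C^5$), integration along the characteristics $s\mapsto (t+s, x+s\theta, \theta)$ with zero initial data at $t=0$ gives an explicit operator
\[
(T_0^{-1} g)(t,x,\theta) = \int_0^t e^{-\int_\tau^t \sigma(r,\,x-(t-r)\theta)\,dr}\, g\big(\tau, x-(t-\tau)\theta, \theta\big)\,d\tau .
\]
The first technical step is to show $T_0^{-1}$ maps $H^2_{\comp}(\mcm)$ (viewed as functions on $\mcm\times\mbs^{n-1}$ constant in $\theta$, or more generally $H^2(\mcm\times\mbs^{n-1})$) boundedly into $H^2(\mcm\times\mbs^{n-1})$; this is where the $C^5$ hypothesis on $\sigma$ is consumed — differentiating the exponential weight and the characteristic flow twice in $(t,x,\theta)$ requires $\sigma\in C^{\geq 2}$ along rays, and the extra regularity is a safety margin needed later when composing with $K$ and its derivatives. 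One checks the relevant integrals converge on the bounded time interval $(0,T)$ and that the $\theta$-derivatives hitting $x-(t-\tau)\theta$ produce factors of $(t-\tau)$ that stay bounded.

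Next I would rewrite \eqref{eq-boltz} as $u = T_0^{-1}(Ku) + T_0^{-1} f$, where $(Ku)(t,x,\theta) = \int_{\mbs^{n-1}} k(t,x,\theta,\theta')u(t,x,\theta')\,d\theta'$, and set $S_k = T_0^{-1}K$, so that the equation becomes $(I - S_k)u = T_0^{-1} f$. The key step is to prove that $S_k$ is a compact operator on $H^2(\mcm\times\mbs^{n-1})$: the integral operator $K$ gains compactness in the $\theta$ variable by the smoothing of its $C^5$ kernel (Arzelà–Ascoli / Rellich in the angular variable since $\mbs^{n-1}$ is compact), and $T_0^{-1}$ provides a gain in the $(t,x)$-variables through the time integration along characteristics, so the composition is compact on the bounded region $\mcv$; again $k\in C^5$ is what lets $S_k$ preserve $H^2$ while still being compact. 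Since $k\mapsto S_k$ is (real-)analytic — indeed affine-linear — in $k\in C^5(\mcv\times\mbs^{n-1}\times\mbs^{n-1})$, the analytic Fredholm theorem applies: either $I - S_k$ is never invertible for all $k$ in a connected neighborhood, or it is invertible outside a closed set of measure zero (in fact a locally finite union of proper analytic subvarieties). Taking $k=0$ gives $S_0 = 0$ and $I - S_0 = I$ invertible, so we are in the second alternative; the set $\msu$ of $k$ for which $I - S_k$ is invertible on $H^2$ is therefore open and dense (even residual) in $C^5(\mcv\times\mbs^{n-1}\times\mbs^{n-1})$. For such $k$, $u = (I - S_k)^{-1}T_0^{-1} f \in H^2(\mcm\times\mbs^{n-1})$ is the unique solution, and uniqueness follows since any solution of the homogeneous problem satisfies $(I-S_k)u=0$.

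The main obstacle I anticipate is the bookkeeping in the two mapping claims — that $T_0^{-1}$ is bounded $H^2\to H^2$ and that $S_k$ is both bounded on $H^2$ and compact — rather than the Fredholm argument itself, which is standard once those are in place. Concretely one must verify that the finitely many $(t,x,\theta)$-derivatives of the characteristic flow and of the weight $e^{-\int\sigma}$ are bounded on $\mcv$ (here compact support of $\sigma, k, f$ in $\mcv$ and finiteness of $T$ are essential to avoid decay issues at infinity), and that differentiating $Ku$ in $x$ or $t$ only moves derivatives onto $k$ (good, since $k\in C^5$) while $\theta$-derivatives act only on the smooth kernel and not on the integration variable $\theta'$, so no loss occurs and one genuinely gains compactness from the angular integral. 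One should also note the slight mismatch that $f$ is only constant in $\theta$ while $u$ lives on $\mcm\times\mbs^{n-1}$; this is harmless since $H^2_{\comp}(\mcm)\hookrightarrow H^2(\mcm\times\mbs^{n-1})$ trivially. With these estimates dispatched, $\msu$ is as claimed and the theorem follows.
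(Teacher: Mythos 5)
There is a genuine gap at the heart of your argument: the claim that $S_k=T_0^{-1}K$ is compact on $H^2(\mcm\times\mbs^{n-1})$, justified by saying that $K$ smooths in $\theta$ and ``$T_0^{-1}$ provides a gain in the $(t,x)$-variables through the time integration along characteristics.'' The second assertion is false: the transport solution operator gains regularity only along the characteristic direction $\p_t+\theta\cdot\nabla_x$ and propagates singularities transversal to it, so it does not improve $(t,x)$-Sobolev regularity, and the angular smoothing of $K$ gains nothing in $(t,x)$ either. A single composition of $K$ with $T_1^{-1}$ is exactly the velocity-averaging situation, where one gains at best half a derivative in an averaged sense, not compactness on $H^2$ in all variables; this is precisely why Stefanov--Uhlmann, and the paper following them, do \emph{not} invert $\id-KT_1^{-1}$ directly but instead prove that the \emph{second} iterate is compact. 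Concretely, the paper shows $KT_1^{-1}K$ is compact (Lemma \ref{lm-cpt}): after the change of variables $y=x+s\theta'$ the two angular integrations produce a weakly singular kernel $\alpha(t,x,y,\theta,\theta')/|y-x|^{n-1}$ in the spatial variable, to which the singular-operator estimates (Propositions \ref{prop-sinest}, \ref{prop-sinest1}) apply and give a one-derivative gain. Your sketch contains no mechanism that replaces this step, so the Fredholm argument has nothing compact to act on.

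A second, related omission is the coupling of the time argument: in $KT_1^{-1}$ the input is evaluated at $(|y-x|,y,\frac{y-x}{|y-x|})$, so even $KT_1^{-1}K$ is not literally an operator of the Stefanov--Uhlmann form. The paper handles this by expanding $f$ in Fourier series in $t$; the oscillatory factor $e^{2\pi i n|y-x|/T}$ costs a factor $n^2$ per term, which is exactly where the hypotheses $f\in H^2$ and $k\in C^5$ (through $\|\alpha\|_{C^4}$) are consumed. Your proposal never confronts this and consequently gives no reason why $H^2$ data and $C^5$ kernels are the right classes. Finally, your appeal to an analytic Fredholm alternative over the Banach space $C^5$ of kernels is not the standard theorem; the paper's fix is to restrict to the complex line $k\mapsto\la k$, apply the one-variable analytic Fredholm theorem to $A(\la)=(\id-(\la KT_1^{-1})^2)^{-1}$ to get a discrete exceptional set in $\la$, and deduce density (openness coming from openness of invertibility). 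That repair is easy once compactness of the squared operator is in hand, but without it the proof does not go through.
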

 
 We remark that the theorem can also be stated for $(\sigma, k)$ in an open dense subset $\msu$ of $C^5(\mcv)\times C^5(\mcv \times \mbs^{n-1}\times \mbs^{n-1})$. 
 
For the proof, we let 
\beq
T_0 = \p_t  + \theta \cdot \nabla_x, \quad T_1 = T_0 + \sigma, \quad T = T_1 - K
\eeq
where $\sigma$ is regarded as the multiplication operator and $K$ is the integral operator in \eqref{eq-boltz}. For $k = 0$, the equation $T_1 u = f$ with $u = 0$ at $t = 0$ can be solved explicitly. For $\theta\in \mbs^{n-1}, t > 0, x\in \mbr^n$, consider $u(t, x, \theta) = u(t, x + t\theta)$ which satisifes 
\beqq\label{eq-u}
\frac{d}{dt} u(t, x + t\theta) + \sigma(t, x + t\theta) u(t, x + t \theta) = f(t, x + t\theta)
\eeqq
An integrating factor is $E(t, x, \theta) = e^{\int_0^t \sigma(s, x + s\theta)ds}$. We solve \eqref{eq-u} that 
\beq
\begin{gathered}
u(t, x + t\theta) = e^{-\int_0^t \sigma(s, x + s\theta)ds} \int_{0}^t  e^{\int_0^s \sigma(\tilde s, x + \tilde s\theta)d\tilde s} f(s, x+s\theta)ds\\
 = \int_{0}^t  e^{- \int_s^t \sigma(\tilde s, x + \tilde s\theta)d\tilde s} f(s, x+s\theta)ds
\end{gathered}
\eeq
Thus we can write $T_1^{-1}$ as 
\beqq\label{eq-T1}
\begin{gathered}
 T_1^{-1} f(t, x, \theta) = \int_{0}^t  \kappa(t, x, s, \theta)f(s, x+s\theta)ds,  \\
\text{with } \kappa(t, x, s, \theta) = e^{- \int_s^t \sigma(\tilde s, x + \tilde s\theta)d\tilde s} 
 \end{gathered}
\eeqq
Next, for $T u = (T_1 - K) u = f$, we apply $T_1^{-1}$ and get $(\id - T_1^{-1} K) u = T_1^{-1} f$. The main part of the proof is to show that $\id - T_1^{-1} K$ is invertible for suitable $k$ so that 
\beqq\label{eq-sol}
u = (\id - T_1^{-1}K)^{-1} T_1^{-1} f
\eeqq
Notice that this can be written as 
\beqq\label{eq-sol1}
u = T_1^{-1}(\id - K T_1^{-1})^{-1} f.
\eeqq
We will show that $\id - K T_1^{-1}$ is invertible. As in \cite{StUh}, we introduce 
\beq 
A = (\id - ( K T_1^{-1})^2)^{-1}
\eeq
and write
\beq 
(\id - K T_1^{-1})^{-1} = (\id + K T_1^{-1})A. 
\eeq
We will show that $(K T_1^{-1})^2$ is compact and apply the analytic Fredholm theory to conclude that $A$ is invertible.

For the proof, we will need the following proposition and its variant about singular operators.
\begin{prop}[Proposition 3.4 of \cite{StUh}]\label{prop-sinest}
Let $A$ be the operator
\beq
Af(x) = \int \frac{\alpha(x, y, |x - y|, \frac{x-y}{|x - y|})}{|x - y|^{n-1}} f(y) dy
\eeq
with $\alpha(x, y, r, \theta)$ compactly supported in $x, y\in \mbr^n$.  
\begin{enumerate}[(i)]
\item
 If $\alpha\in C^2$, then $A: L^2(\mbr^n)\rightarrow H^1(\mbr^n)$ is continuous with a norm not exceeding $C\|\alpha\|_{C^2}$.
\item Let $\alpha(x, y, r, \theta) = \alpha'(x, y, r, \theta)\phi(\theta)$, then 
\beq
\|A\|_{L^2\rightarrow H^1} \leq C\|\alpha'\|_{C^2} \|\phi\|_{H^1(\mbs^{n-1})}
\eeq
\end{enumerate}
\end{prop}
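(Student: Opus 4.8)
The plan is to reduce the statement to two classical facts about the Schwartz kernel $\Phi(x,y)=\alpha\big(x,y,|x-y|,\tfrac{x-y}{|x-y|}\big)\,|x-y|^{1-n}$: that it is weakly singular, hence $L^2$-bounded by Schur's test, and that one $x$-derivative of it is a Calderón--Zygmund kernel of order $-n$ whose angular part has mean zero, hence again $L^2$-bounded by singular integral theory. Conceptually, after the change of variables $y=x-r\omega$, $r>0$, $\omega\in\mbs^{n-1}$, $dy=r^{n-1}dr\,d\omega$, one gets $Af(x)=\int_{\mbs^{n-1}}\int_0^\infty \alpha(x,x-r\omega,r,\omega)\,f(x-r\omega)\,dr\,d\omega$, so $A$ is a weighted average of $f$ over the lines through $x$ --- morally the normal operator of a weighted restricted X-ray transform, an elliptic pseudodifferential operator of order $-1$. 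This is why the gain of one derivative is the right statement; the content of the proposition is the control of the constants by $\|\alpha\|_{C^2}$, respectively by $\|\alpha'\|_{C^2}\|\phi\|_{H^1(\mbs^{n-1})}$.

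First I would prove the $L^2\to L^2$ bound: since $\alpha$ is compactly supported in $x,y$ and $|z|^{1-n}\in L^1_{\mathrm{loc}}(\mbr^n)$, both $\sup_x\int|\Phi(x,y)|\,dy$ and $\sup_y\int|\Phi(x,y)|\,dx$ are finite and $\le C\|\alpha\|_{C^0}$, so Schur's lemma gives $\|A\|_{L^2\to L^2}\le C\|\alpha\|_{C^0}$; in case (ii) the same bound produces the factor $\|\alpha'\|_{C^0}\|\phi\|_{L^1(\mbs^{n-1})}$, which is $\le C\|\phi\|_{H^1(\mbs^{n-1})}$ by the Sobolev embedding on the sphere. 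Next, for the gradient, I would not differentiate under the integral sign (the formal $x$-derivative of $\Phi$ is not integrable) but work with truncations $\Phi_\varepsilon(x,y)=\Phi(x,y)\chi(|x-y|/\varepsilon)$, $\chi$ smooth vanishing near $0$ and $\equiv1$ on $[1,\infty)$, and let $\varepsilon\to0$. Differentiating $A_\varepsilon f$ and sorting terms by the power of $|x-y|$ they carry, the terms in which $\p_{x_j}$ falls on the first or the radial slot of $\alpha$ keep the bound $\lesssim\|\alpha\|_{C^1}|x-y|^{1-n}$ and are handled as before; the terms in which $\p_{x_j}$ falls on $|x-y|^{1-n}$ or, through $\omega=\tfrac{x-y}{|x-y|}$, on the angular slot of $\alpha$, together with the truncation commutator $\Phi(x,y)\chi'(|x-y|/\varepsilon)\tfrac{x_j-y_j}{\varepsilon|x-y|}$, combine --- using $\int_{\mbs^{n-1}}\omega_j\,d\omega=0$ and spherical integration by parts to rewrite the $\p_\omega\alpha$-piece through the tangential gradient --- into a principal-value singular integral with kernel $\Omega(x,y,\omega)|x-y|^{-n}$, $\int_{\mbs^{n-1}}\Omega(x,x,\omega)\,d\omega=0$. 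Such operators are $L^2$-bounded, and tracking constants yields $\|\p_{x_j}(Af)\|_{L^2}\le C\|\alpha\|_{C^2}\|f\|_{L^2}$, which is (i).

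For (ii) I would first take $\phi\in C^\infty(\mbs^{n-1})$ and rerun the argument with $\alpha$ replaced by $\alpha'\phi$, keeping track of the $\phi$-dependence: the weakly singular terms see $\phi$ only through $\|\phi\|_{L^1}$, whereas in the singular term the derivative may land on $\phi$, producing the angular factor $\nabla_{\mbs^{n-1}}\phi$, which enters through $\|\nabla_{\mbs^{n-1}}\phi\|_{L^2(\mbs^{n-1})}\le\|\phi\|_{H^1(\mbs^{n-1})}$; the $L^2$-boundedness of a singular integral whose angular profile is merely $L^2$ with mean zero is precisely the rough-kernel Calderón--Zygmund estimate. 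This gives $\|A\|_{L^2\to H^1}\le C\|\alpha'\|_{C^2}\|\phi\|_{H^1(\mbs^{n-1})}$ for smooth $\phi$, and the general case follows by approximating $\phi$ by $\phi_k\to\phi$ in $H^1(\mbs^{n-1})$ and passing to the limit, the operator norms being uniformly controlled. I expect the main obstacle to be this middle step: making rigorous at the diagonal $x=y$ the passage from the formally divergent $x$-derivative to a genuine principal-value singular integral --- extracting the mean-zero cancellation of the angular factors uniformly as $\varepsilon\to0$ and checking the Hörmander/smoothness conditions with exactly the stated number of derivatives of $\alpha$ --- which is where the hypothesis $\alpha\in C^2$, and in (ii) $\phi\in H^1(\mbs^{n-1})$, is really used.
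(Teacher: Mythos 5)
This proposition is not proved in the paper at all: it is quoted verbatim as Proposition 3.4 of \cite{StUh}, so the only in-paper "proof" is the citation (the paper's own work starts with Proposition \ref{prop-sinest1}, which is reduced back to this statement by differentiating in polar coordinates). Your attempt is therefore a genuinely independent, real-variable Calder\'on--Zygmund argument rather than a reproduction of anything in the text. Its core is sound: the $L^2$ bound by Schur's test with constant $C\|\alpha\|_{C^0}$ (resp.\ $C\|\alpha'\|_{C^0}\|\phi\|_{L^1(\mbs^{n-1})}$) is correct, and the crucial cancellation you invoke does hold --- the order $|x-y|^{-n}$ part of $\p_{x_j}$ of the kernel is $\Omega(x,y,\omega)|x-y|^{-n}$ with $\Omega=(1-n)\,\omega_j\,\alpha+\nabla_{\mbs^{n-1}}\alpha\cdot(e_j-\omega_j\omega)$, and since $\operatorname{div}_{\mbs^{n-1}}(e_j-(e_j\cdot\omega)\omega)=-(n-1)\omega_j$, integration by parts on the sphere gives $\int_{\mbs^{n-1}}\Omega(x,x,\omega)\,d\omega=0$ for each $x$. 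The $\varepsilon$-truncation limit also produces a zero-order term $c_j(x)f(x)$ with $|c_j|\le C\|\alpha\|_{C^0}$, which your "commutator combines into the principal value" phrasing glosses over but which is harmless.

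The one step that is asserted rather than proved, and that genuinely needs an extra ingredient, is the $L^2$ bound for the singular part in case (ii). There the kernel is a \emph{variable} kernel $\Omega(x,y,\omega)|x-y|^{-n}$ whose rough angular factor enters through both $\phi$ and $\nabla_{\mbs^{n-1}}\phi$, and the mean-zero property holds only for the full combination at the diagonal --- not termwise --- so the convolution-type rough-kernel theorem you appeal to does not apply as stated. The standard repair is exactly in the spirit of your sketch but must be said: freeze $(y,r)$ at $(x,0)$, i.e.\ write $\Omega(x,y,\omega)=\Omega(x,x,\omega)|_{r=0}+\bigl(\Omega(x,y,\omega)-\Omega(x,x,\omega)|_{r=0}\bigr)$; the difference is $O\bigl(|x-y|\,\|\alpha'\|_{C^2}(|\phi(\omega)|+|\nabla_{\mbs^{n-1}}\phi(\omega)|)\bigr)$, so that piece of the kernel is again weakly singular with angular weight in $L^1(\mbs^{n-1})$ and is absorbed by Schur with constant $\|\alpha'\|_{C^2}\|\phi\|_{H^1}$; the frozen piece is a singular integral with kernel $\Omega_0(x,\omega)|x-y|^{-n}$, mean zero in $\omega$ for every $x$ and $\sup_x\|\Omega_0(x,\cdot)\|_{L^2(\mbs^{n-1})}\le C\|\alpha'\|_{C^1}\|\phi\|_{H^1}$, and its $L^2$ boundedness with a constant controlled by that supremum is the Calder\'on--Zygmund theorem on singular integrals with variable kernels (e.g.\ via expansion of $\Omega_0(x,\cdot)$ in spherical harmonics), not the plain rough-kernel convolution estimate. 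With that theorem cited (and the same freezing used in case (i), where it only costs $\|\alpha\|_{C^2}$), your argument closes and yields exactly the stated bounds; it is a more self-contained route than the paper's citation of \cite{StUh}, at the price of importing the variable-kernel machinery.
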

We remark that the constant $C$ is independent of $\|\alpha\|_{C^2}$ but   depends on the support of $\alpha.$ It can be made uniform if $\alpha(x, y, \cdot, \cdot)$ is supported in a fixed compact set in $x, y$ variables.  
The proposition can be slightly improved for $H^m, m\geq 0$ functions. 
\begin{prop}\label{prop-sinest1}
Consider operator $A$ in Proposition \ref{prop-sinest}.  
\begin{enumerate}[(i)]
\item
 If $\alpha\in C^{m+2}, m  = 0, 1, 2, \cdots$, then $A: H^m(\mbr^n)\rightarrow H^{m+1}(\mbr^n)$ is continuous with a norm not exceeding $C\|\alpha\|_{\tilde C^{m+2}}$ where
 \beq
 \|\alpha\|_{\tilde C^{m+2}} = \sup \sum_{|\gamma| = 2, |\beta| = m}|\p_{x, y}^\beta \p_{x, y, r, \theta}^\gamma \alpha|
 \eeq
\item Let $\alpha(x, y, r, \theta) = \alpha'(x, y, r, \theta)\phi(\theta)$, then 
\beq
\|A\|_{H^m\rightarrow H^{m+1}} \leq C\|\alpha'\|_{\tilde C^{m+2}} \|\phi\|_{H^1(\mbs^{n-1})}
\eeq
\end{enumerate}
\end{prop}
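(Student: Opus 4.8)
The plan is to deduce the $H^m \to H^{m+1}$ bound from the $L^2 \to H^1$ bound of Proposition \ref{prop-sinest} by commuting $m$ derivatives past the operator $A$. Write $A f(x) = \int K(x,y) f(y)\, dy$ with singular kernel $K(x,y) = |x-y|^{-(n-1)} \alpha(x, y, |x-y|, (x-y)/|x-y|)$. Fix a multi-index $\beta$ with $|\beta| = m$. The key algebraic observation is that $\partial_x^\beta$ applied to $A f$ can be reorganized, using $\partial_{x_j} K(x,y) = -\partial_{y_j} K(x,y) + (\partial_{x_j} K)_{\text{coeff}}(x,y)$ where the ``coefficient part'' means the $x_j$-derivative that falls only on the $x$-slots of $\alpha$ (not through the $|x-y|$ and angular arguments, which are antisymmetric in $x \leftrightarrow y$). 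Iterating and integrating by parts in $y$ to move the $\partial_y$-pieces onto $f$, one obtains a finite sum
\[
\partial_x^\beta (A f)(x) = \sum_{\beta_1 + \beta_2 = \beta} c_{\beta_1,\beta_2} \int K_{\beta_1}(x,y)\, \partial_y^{\beta_2} f(y)\, dy
\]
where each $K_{\beta_1}(x,y) = |x-y|^{-(n-1)} \alpha_{\beta_1}(x,y,|x-y|,(x-y)/|x-y|)$ is again a kernel of the same type, with $\alpha_{\beta_1}$ a derivative of $\alpha$ of order $|\beta_1| \le m$ in the $x,y$-slots only, hence controlled by $\|\alpha\|_{\tilde C^{m+2}}$ once we also apply the two further derivatives needed to invoke Proposition \ref{prop-sinest}.

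The execution then goes as follows. First I would set up the integration-by-parts identity above carefully, tracking that every term either has all of its $x$-derivatives landing on $f$ (after transfer to $y$-derivatives) or landing purely on the non-singular $x,y$-arguments of $\alpha$ — the homogeneity $-(n-1)$ of the radial factor and the angular factor are never differentiated except through the $x\leftrightarrow y$ antisymmetric combination, which is exactly what makes the transfer $\partial_x \rightsquigarrow -\partial_y$ exact up to coefficient terms. Second, for each resulting term $\int K_{\beta_1}(x,y)\, \partial_y^{\beta_2} f(y)\, dy$ with $|\beta_2| \le m$, I apply Proposition \ref{prop-sinest}(i) with $\alpha$ replaced by $\alpha_{\beta_1}$: this needs $\alpha_{\beta_1} \in C^2$, i.e. $\alpha \in C^{m+2}$, and gives a bound $C \|\alpha_{\beta_1}\|_{C^2} \|\partial_y^{\beta_2} f\|_{L^2} \le C \|\alpha\|_{\tilde C^{m+2}} \|f\|_{H^m}$ — but since we only ever take two derivatives on the $(r,\theta)$-slots and up to $|\beta_1| \le m$ on the $(x,y)$-slots, the precise norm that appears is exactly $\|\alpha\|_{\tilde C^{m+2}}$ as defined in the statement rather than the full $C^{m+2}$ norm. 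Summing over the finitely many $(\beta_1,\beta_2)$ and over $|\beta| = m$ gives $\|A f\|_{H^{m+1}} \le C \|\alpha\|_{\tilde C^{m+2}} \|f\|_{H^m}$, proving (i). For (ii), the same argument applies with $\alpha = \alpha' \phi(\theta)$: the angular factor $\phi(\theta)$ is differentiated at most twice through the angular slot in the course of applying Proposition \ref{prop-sinest}(ii), so each term is bounded by $C \|\alpha'_{\beta_1}\|_{\tilde C^2} \|\phi\|_{H^1(\mbs^{n-1})} \|f\|_{H^m}$, and summing yields the claim.

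The main obstacle is the bookkeeping in the integration-by-parts step: one must verify that commuting $\partial_x^\beta$ through the singular kernel never produces a term that is more singular than $|x-y|^{-(n-1)}$. This is precisely why one transfers $x$-derivatives to $y$-derivatives rather than differentiating the kernel directly — differentiating $|x-y|^{-(n-1)}$ would raise the singularity order and break the hypothesis of Proposition \ref{prop-sinest}. The antisymmetry of the arguments $r = |x-y|$ and $\theta = (x-y)/|x-y|$ under $x \leftrightarrow y$ is what guarantees $(\partial_{x_j} + \partial_{y_j})$ annihilates these arguments, so that $\partial_{x_j} K = -\partial_{y_j} K + (\text{terms with } \partial_j \text{ on the first two slots of }\alpha \text{ only})$, and the latter terms are manifestly of the same type with one more $(x,y)$-derivative on $\alpha$. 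Once this identity is in hand the rest is a routine induction on $m$ (or a direct expansion), and the regularity count $C^{m+2}$ with the restricted norm $\|\cdot\|_{\tilde C^{m+2}}$ falls out automatically.
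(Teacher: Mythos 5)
Your proposal is correct and is essentially the paper's argument: commute the $x$-derivatives through $A$, using that $\partial_{x_j}+\partial_{y_j}$ annihilates the arguments $|x-y|$ and $(x-y)/|x-y|$, so each term has either a derivative on $f$ or a derivative on the $(x,y)$-slots of $\alpha$, and then apply Proposition \ref{prop-sinest} to each resulting kernel. The paper carries out exactly this transfer by first passing to polar coordinates $y=x+r\theta$ (which also settles the justification of differentiating through the singular kernel that you flag as the main obstacle) and proves the case $m=1$, the general case being identical.
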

\bpf
Assume that $f\in H_{\comp}^m(\mcm)$. We prove for $m=1$ and the other cases are similar.  Using polar coordinate, we write 
\beq
\begin{gathered}
Af(x) = \int \alpha(x, x + r\theta, r, \theta) f(x + r\theta) dr d\theta
\end{gathered}
\eeq
For $i = 1, 2, \cdots, n$, we get
\beq
\begin{gathered}
\p_{x_i}Af(x) = \int \p_{x^i}\alpha(x, x + r\theta, r, \theta) f(x + r\theta) + \alpha(x, x + r\theta, r, \theta) \p_{x^i} f(x + r\theta) dr d\theta\\
 =  \int   \frac{(\p_{x_i}\alpha + \p_{y_i}\alpha)(x, y, |x - y|, \frac{x-y}{|x - y|})}{|x - y|^{n-1}}  f(y) dy 
 + \int  \frac{\alpha(x, y, |x - y|, \frac{x-y}{|x - y|})}{|x - y|^{n-1}}  \p_{y^i} f(y) dy
\end{gathered}
\eeq
Now we can apply Proposition \ref{prop-sinest} to finish the proof. 
\epf

\begin{lemma}\label{lm-cpt}
The operator $K T_1^{-1}K$ is compact  on  $H^2(\mcv\times \mbs^{n-1})$.
\end{lemma}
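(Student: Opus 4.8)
The plan is to show that $KT_1^{-1}K$ gains one order of Sobolev regularity on $H^2(\mcv\times\mbs^{n-1})$, which combined with the Rellich compactness theorem (available because everything is supported in the relatively compact set $\mcv$) yields compactness. The central observation is that the composition $KT_1^{-1}K$, unlike the single factor $KT_1^{-1}$, has a Schwartz kernel of precisely the weakly singular form treated in Proposition~\ref{prop-sinest1}, with the $|x-y|^{-(n-1)}$ singularity coming from the change of variables in the explicit formula \eqref{eq-T1} for $T_1^{-1}$.

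First I would write out the kernel of $KT_1^{-1}K$ explicitly. Applying $K$ (which integrates against $k(t,x,\theta,\theta')$ in the $\theta'$ variable) then $T_1^{-1}$ from \eqref{eq-T1} (which is the line integral $\int_0^t \kappa(t,x,s,\theta)(\cdot)(s,x+s\theta)\,ds$ with $\kappa$ smooth, built from $\sigma\in C^5$) then $K$ again, one lands on an expression of the form
\beq
KT_1^{-1}K g(t,x,\theta) = \int_0^t\!\!\int_{\mbs^{n-1}}\!\!\int_{\mbs^{n-1}} \kappa(t,x,s,\theta)\, k(t,x,\theta,\theta')\, k(s, x+s\theta, \theta', \theta'')\, g(s, x+s\theta, \theta'')\, d\theta''\, d\theta'\, ds.
\eeq
For fixed $\theta,\theta''$ the map $g\mapsto \int_0^t \int_{\mbs^{n-1}} (\cdots) g(s,x+s\theta,\theta')\,d\theta'\,ds$ is, after the substitution $y = x+s\theta$ (so $s=$ a function of $x,y$ when $\theta = (y-x)/|y-x|$, and $ds\,d\theta'$ relates to $dy$ up to the Jacobian factor $|x-y|^{-(n-1)}$), exactly an operator of the type in Proposition~\ref{prop-sinest1} with amplitude $\alpha(x,y,r,\omega) = \kappa(t,x,r,\omega)k(t,x,\omega,\theta')k(r,y,\theta',\theta'')$; the $C^{m+2}$-regularity of $\alpha$ in the relevant variables is supplied by $\sigma\in C^5$, $k\in C^5$. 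Hence for each fixed pair of sphere variables the inner operator maps $H^2_x\to H^3_x$ boundedly, and integrating in $\theta',\theta''$ over the compact spheres and using the smooth dependence on $\theta$ keeps the $\mbs^{n-1}$ derivatives under control, so $KT_1^{-1}K: H^2(\mcv\times\mbs^{n-1})\to H^3(\mcv\times\mbs^{n-1})$ is bounded. Compactness on $H^2(\mcv\times\mbs^{n-1})$ then follows from the compact embedding $H^3(\mcv\times\mbs^{n-1})\hookrightarrow H^2(\mcv\times\mbs^{n-1})$.

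The main obstacle is bookkeeping rather than conceptual: I must verify that the first application of $K$ does not destroy the weakly-singular structure and, in particular, that the $t$-variable and the sphere variables $\theta,\theta',\theta''$ can be handled as harmless parameters while the genuine smoothing happens only in the spatial variable via the single $T_1^{-1}$ in the middle. One needs to check carefully that the cutoff to $\mcv$ keeps all amplitudes compactly supported in $x,y$ uniformly (so the constant in Proposition~\ref{prop-sinest1} is uniform), that the Jacobian of the polar-type change of variables is smooth and nonvanishing away from $s=0$ where $\kappa\equiv 1$ and no singularity arises, and that differentiating in the $\theta$ directions tangent to the sphere only produces additional bounded terms of the same structural type (this is where one might invoke part (ii) of Proposition~\ref{prop-sinest1} with $\phi$ an $H^1(\mbs^{n-1})$ factor coming from the $k$'s). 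Once these routine verifications are in place the lemma is immediate.
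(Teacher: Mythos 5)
Your skeleton (weakly singular kernel, Proposition \ref{prop-sinest1}, gain of one derivative, Rellich) is the same as the paper's, but there is a genuine gap at exactly the step you defer as ``bookkeeping'': the $t$-variable cannot be treated as a harmless parameter. First, a slip in your displayed formula: the line integral in $T_1^{-1}$ runs along the direction $\theta'$ that the \emph{outer} $K$ integrates over, not along the free direction $\theta$; the correct composition is
\beq
KT_1^{-1}Kg(t,x,\theta)=\int_{\mbs^{n-1}}\!\int_0^t\!\int_{\mbs^{n-1}} k(t,x,\theta,\theta')\,\kappa(t,x,s,\theta')\,k(s,x+s\theta',\theta',\theta'')\,g(s,x+s\theta',\theta'')\,d\theta''\,ds\,d\theta',
\eeq
and it is the pair $(s,\theta')$ that is converted to $y=x+s\theta'$, producing the $|x-y|^{-(n-1)}$ Jacobian. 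After this substitution the integrand is $g(|y-x|,y,\theta'')$: the \emph{time slot} of $g$ is evaluated at $|y-x|$, i.e.\ the operator reads $g$ along light cones, not on the slice $\{t\}\times\mbr^n$. Consequently you cannot fix $t$ and apply Proposition \ref{prop-sinest1} to $g(t,\cdot,\cdot)$: the function of $y$ actually being integrated, $y\mapsto g(|y-x|,y,\theta'')$, depends on $x$ itself, and its spatial Sobolev regularity involves time derivatives of $g$, so the slice-wise smoothing estimate does not apply as stated. This coupling of $t$ with $|y-x|$ is the whole difficulty of the lemma, not a routine verification.

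The paper supplies precisely the missing device: expand $g$ in Fourier series in $t$, $g=\sum_n f_n(x,\theta)e^{2\pi i nt/T}$, so that after the substitution the $n$-th term becomes $e^{2\pi i n|y-x|/T}f_n(y,\theta'')$ --- a genuine function of $y$ times an oscillatory factor that is absorbed into the amplitude $\alpha$. Each derivative of $e^{2\pi i n|y-x|/T}$ costs a factor $n$, so Proposition \ref{prop-sinest1} gives an $H^2\to H^3$ bound of size $Cn^2\|\alpha\|_{C^4}$ for each mode, and the resulting $\sum_n n^4\|f_n\|^2_{H^2}$ is paid for by the time regularity of $g$; a further count of $\p_t^\beta$ derivatives, $|\beta|\le 3$, using $\sigma,k\in C^5$, then yields boundedness $H^2(\mcv\times\mbs^{n-1})\to H^3(\mcv\times\mbs^{n-1})$ and hence compactness. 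Without this Fourier-series step (or some equivalent way to decouple the time slot from $|y-x|$), your argument does not close; the rest of your outline (compact support keeping the constants uniform, smooth dependence on the sphere variables, use of part (ii) for the spherical factors) is consistent with what the paper does.
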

\bpf
 We find that
\beq
\begin{gathered}
K T_1^{-1} f(t, x, \theta) = \int_{\mbs^{n-1}}k(t, x, \theta, \theta') \int_{0}^t  \kappa(t, x, s, \theta') f(s, x+s\theta', \theta')ds d\theta'
\end{gathered}
\eeq
where $\kappa$ is defined in \eqref{eq-T1}. 
Set $y = x + s\theta'$. We get $s = |y - x|, \theta' = (y - x)/|y - x|$ and 
\beq
\begin{gathered}
K T_1^{-1} f(t, x, \theta) = \int_{\mcv} \frac{k(t, x, \theta, \frac{y-x}{|y-x|}) \kappa(t, x, |y-x|, \frac{y-x}{|y-x|})}{|y - x|^{n-1}} f(|y-x|, y, \frac{y-x}{|y-x|})dy
\end{gathered}
\eeq
Next, we compute
\beqq\label{eq-KTK}
\begin{gathered}
K T_1^{-1} K f(t, x, \theta) = \int \frac{\alpha(t, x, y, \theta, \theta')}{|y - x|^{n-1}}  f(|y-x|, y, \theta')dyd\theta'
\end{gathered}
\eeqq
where 
\beq
\alpha(t, x, y, \theta, \theta') = k(t, x, \theta, \frac{y-x}{|y-x|}) \kappa(t, x, |y-x|, \frac{y-x}{|y-x|})k(|y-x|, y, \frac{y-x}{|y-x|}, \theta') 
\eeq
Note that $\alpha$ is $C^5$. Since $f$ is compactly supported in the $t$ variable, we can write $f$ in Fourier  series in $t$ as 
\beq
f(t, x, \theta) = \sum_{n = -\infty}^\infty f_n(x, \theta) e^{i2 \pi n t/T},
\eeq
where $f_n(x, \theta) = \frac{1}{T}\int_0^T f(t, x, \theta) e^{-i2 \pi n t/T} dt$. Also, for $f\in  H^2(\mcv\times \mbs^{n-1})$, Plancherel's theorem tells 
\beq
\|f\|^2_{H^2(\mcm\times \mbs^{n-1})} = \sum_{n=-\infty}^\infty \|f_n \|^2_{H^2(\mbr^n\times \mbs^{n-1})}
\eeq
and 
\beq
\|\p_t^2 f\|^2_{H^2(\mcm\times \mbs^{n-1})} = \sum_{n=-\infty}^\infty T^{-4} n^4 \|f_n \|^2_{H^2(\mbr^n\times \mbs^{n-1})}
\eeq
See for example \cite{Tor}. Here, $f_n$ are functions on $\Omega\times \mbs^{n-1}$ and we extended them trivially to $\mbr^n\times \mbs^{n-1}$ for convenience. 
Let $g_n(t, x, \theta) = f_n(x, \theta) e^{i2 \pi n t/T} $. We have 
\beq
\begin{gathered}
K T_1^{-1} K g_n (t, x, \theta) = \int \frac{\alpha(t, x, y, \theta, \theta')}{|y - x|^{n-1}} e^{i2 \pi n  \frac{ |y - x|}{T}} f_n(y, \theta')dyd\theta'
\end{gathered}
\eeq
For fixed $t$, it follows from Proposition \ref{prop-sinest1} (i) that the operator is bounded from $H^2(\mbr^n \times \mbs^{n-1})$ to $H^3(\mbr^n\times \mbs^{n-1})$ with norm not exceeding $C n^2 \|\alpha\|_{C^4}$ with $C$ depending on $\mcv$, namely
\beq
\|K T_1^{-1} K g_n\|_{H^3(\mbr^n \times \mbs^{n-1})} \leq C \|\alpha\|_{C^4} n^2 \|f_n\|_{H^2(\mbr^n\times \mbs^{n-1})}
\eeq
Note that when applying Proposition \ref{prop-sinest}, we need $k$ to be compactly supported in $t, x$ variable. Summing up in $n$, we get 
\beq
\|K T_1^{-1} K f(t, \cdot, \cdot)\|^2_{H^3(\mbr^n \times \mbs^{n-1})} \leq C \sum_{n=-\infty}^\infty n^4 \|f_n\|^2_{H^2(\mbr^n\times \mbs^{n-1})} \leq C \|f\|^2_{ H^2(\mcm\times \mbs^{n-1})}
\eeq
under our regularity assumption on $f$. This shows in particular that $K T_1^{-1} K f\in L^2([0, T], H^3(\mbr^3\times \mbs^{n-1})).$ 
By considering $\p_{t}^\beta (K T_1^{-1} K f)$ for $|\beta|\leq 3$, we see that $KT_1^{-1}K$ is bounded from $H^2(\mcv\times \mbs^{n-1})$ to $ H^3(\mcv\times \mbs^{n-1})$ by using Proposition \ref{prop-sinest1} and $\sigma, k \in C^5. $ Thus $KT_1^{-1}K$ is compact  on $H^2(\mcv\times \mbs^{n-1})$.
\epf

\bpf[Proof of Theorem \ref{thm-exist}]  We aim to find $k$ such that $T^{-1}$ exists. Let $\la\in \mbc$. We replace the scattering kernel $k$ in \eqref{eq-boltz} by $\la k$ and denote the corresponding operator by $\la K.$ Formally, we consider 
\beqq\label{eq-inv1}
A(\la) = (\id - (\la K T_1^{-1})^2)^{-1}
\eeqq
so that 
\beqq\label{eq-inv2}
(\id - \la K T_1^{-1})^{-1} = (\id + \la K T_1^{-1})A(\la). 
\eeqq
We need to justify the invertibility of the operator in $A(\la)$. 
Since $(\la KT_1^{-1})^2$ is compact from Lemma \ref{lm-cpt}, by the analytic Fredholm theorem \cite[Theorem VI.14]{ReSi1}, we know that there exist a discrete set $\mcs$ of $\mbc$ such that for $\la\notin \mcs$, $A(\la)$ exits and for such $\la$, \eqref{eq-inv2} is justified. We now use \eqref{eq-sol1} and that $T_1^{-1}$ is bounded on $H^2(\mcm)$ which might not be optimal. But this shows that the operator $T$ is invertible on $H^2(\mcm)$ for scattering kernel $\la k$ where $\la\in \mbc\backslash \mcs$ which implies that the set of such $k$ is dense in $C^5(\mcm\times \mbs^{n-1}\times \mbs^{n-1})$.
\epf
 
Let $u$ be the solution of \eqref{eq-boltz} with initial condition \eqref{eq-boltz1}. We set 
\beqq\label{eq-X}
Xf=  u|_{t = T}.
\eeqq
We can use Theorem \ref{thm-exist} to obtain a representation for $X$. Let $\rho_T$ be the restriction operator to $t = T.$ It follows from \eqref{eq-sol} that
\beqq\label{eq-X1}
X = \rho_T T_1^{-1} (\id - K T_1^{-1})^{-1}  
\eeqq
We use that $\rho_T T_1^{-1}$ is a light ray transform with weight and is  bounded from $H^2(\mcm)$ to $H^{5/2}(\mbr^n \times \mbs^{n-1})$, see Section \ref{sec-lraywei}. We  conclude that  $X: H^2(\mcm) \rightarrow H^{5/2}(\mbr^n \times \mbs^{n-1})$ is bounded. 

\section{The Minkowski light ray transform}\label{sec-lray}
To prove Theorem \ref{thm-main1}, we will treat $X$ in \eqref{eq-X1} as a perturbation of the light ray transform on Minkowski spacetime by compact operators.  In fact, when $\sigma = k = 0$, we see that
\beq
X f = \rho_T T_0^{-1} f = \int_{0}^T  f(s, x+s\theta)ds
\eeq
which is basically the light ray transform on  the Minkowski spacetime $(\mbr^{3+1}, g)$ where $g = -dt^2 + dx_1^2 + dx_2^2 + dx_3^2$.  We parametrize the light ray transform using null vectors at $\mcm_0$ as follows, see Figure \ref{fig-lray}.  
For $\theta\in \mbs^{2}, x\in \mbr^3$, the light-like geodesics from $(0, x)$ in the direction $(1, \theta)$ is given by $l_{x, \theta}(s) = (s, x + s\theta),  s\in \mbr.$  The set of light rays are parametrized by the set $\mcc \defeq \mbr^3\times \mbs^{2}.$   We parametrize the light ray transform  as 
\beqq\label{eq-lray}
L  f(x, \theta) = \int_\mbr  f(s, x + s\theta) ds
\eeqq
 When $\sigma\neq 0, k = 0$, $X$ is a light ray transform with weight which we study later in Section \ref{sec-lraywei}. If $\sigma(z) = \sigma(t)$ only depends on the $t$ variable, we have
\beqq\label{eq-X0}
\begin{gathered}
X f 
= \int_{0}^T \kappa(s) f(s, x+s\theta)ds = L(\kappa f)  
\text{ where } \kappa(s) =  e^{- \int_s^T \sigma(\tilde s)d\tilde s} 
\end{gathered}
\eeqq
In this case, it suffices to look at the light ray transform $L$. This is why we impose the assumption on $\sigma$ in Theorem \ref{thm-main1}.

It is known that $L$ is injective on $C_0^\infty$ functions. However, when acting on say Schwartz functions, $L$ has a non-trivial kernel consisting of functions whose Fourier transform is supported in $\Gamma^{tm}$, see for instance \cite{Ilm}.  Recall the operator $\phi(D)$ defined in the introduction.   Let $\phi \in \mcd'(\mbr^{3+1})$ such that $\phi(\zeta) = 1$ if $\zeta \in \Gamma^{sp}\cup \Gamma^{lt}\cup \{0\}$ and $\phi(\zeta) = 0$ if $\zeta\in \Gamma^{tm}$.  
It is easy to see that $\phi(D): H^s(\mbr^{3+1}) \rightarrow H^s(\mbr^{3+1}), s\in \mbr$ is bounded. Also, $\phi^2(D) = \phi(D)$ so $\phi(D)$ is a projection on $H^s(\mbr^{3+1})$. We denote the range of $\phi(D)$ on $H^s(\mbr^{3+1})$ by $\msh^s$ which is a closed subspace of $H^s(\mbr^{3+1})$, hence a Hilbert space.  The following simple result plays an important role.  
\begin{prop}\label{prop-cut}  
 For $f\in L^2_{\comp}(\mbr^{3+1}),$ we have 
\beqq\label{eq-fou}
L (\kappa f) = L (\phi(D) (\kappa f)).
\eeqq
\end{prop}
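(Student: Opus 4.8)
The plan is to pass to the Fourier side and exploit the fact that the multiplier $\phi(D)$ restricts to the identity on exactly the part of frequency space that the light ray transform $L$ sees.

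Set $g=\kappa f$. Since $\sigma$ is supported in $(0,T)$, the weight $\kappa(s)=e^{-\int_s^T\sigma(\tilde s)\,d\tilde s}$ is smooth, bounded, and constant outside $[0,T]$; hence $g\in L^2_{\comp}(\mbr^{3+1})$ whenever $f$ is, and by the Paley--Wiener theorem its full space-time Fourier transform $\mcf g$ extends to an entire function, so in particular $\mcf g$ is continuous on $\mbr^{3+1}$. I would first record the Fourier-slice formula for $L$: writing $\mcf_x$ for the partial Fourier transform in the spatial variable alone, Fubini together with the substitution $y=x+s\theta$ in \eqref{eq-lray} gives, for $\theta\in\mbs^2$ and $\xi\in\mbr^3$,
\[
\mcf_x\big(Lh\big)(\xi,\theta)\;=\;\int_\mbr e^{is\,\theta\cdot\xi}\,\mcf_x h(s,\xi)\,ds\;=\;(\mcf h)(-\theta\cdot\xi,\ \xi).
\]
This is immediate when $h$ is compactly supported (for fixed $\xi$ the function $\mcf_x h(\cdot,\xi)$ is then compactly supported in $s$), and for $h=\phi(D)g$ the identity still makes sense: for each fixed $\xi$, $\mcf\big(\phi(D)g\big)(\cdot,\xi)=\phi(\cdot,\xi)\,\mcf g(\cdot,\xi)$ is compactly supported in $\tau$ and coincides with the continuous function $\mcf g(\cdot,\xi)$ on the open interval $|\tau|<|\xi|$.

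The crux is then an elementary geometric observation: for every $\theta\in\mbs^2$ and $\xi\in\mbr^3$ we have $|\theta\cdot\xi|\le|\theta|\,|\xi|=|\xi|$, so the covector $(-\theta\cdot\xi,\xi)$ satisfies $\tau^2=(\theta\cdot\xi)^2\le|\xi|^2$; that is, $(-\theta\cdot\xi,\xi)\in\Gamma^{sp}\cup\Gamma^{lt}\cup\{0\}$, which is exactly the region on which $\phi\equiv1$. Applying the slice formula to $g$ and to $\phi(D)g$ and using $\phi(-\theta\cdot\xi,\xi)=1$,
\[
\mcf_x\big(L(\phi(D)g)\big)(\xi,\theta)=\phi(-\theta\cdot\xi,\xi)\,(\mcf g)(-\theta\cdot\xi,\xi)=(\mcf g)(-\theta\cdot\xi,\xi)=\mcf_x\big(Lg\big)(\xi,\theta),
\]
and since $\mcf_x$ is injective we conclude $L(\phi(D)g)=Lg$, which is \eqref{eq-fou}. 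Equivalently: $L$ annihilates $(1-\phi(D))g$, whose Fourier transform is supported in the open time-like cone $\Gamma^{tm}$, simply because the hyperplane $\{\tau=-\theta\cdot\xi\}$ over which $L$ integrates never meets $\Gamma^{tm}$.

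I expect the only genuinely technical point to be the justification that the slice formula, and hence the displayed manipulation, survives the fact that $\phi(D)g$ is no longer compactly supported, so that the line integral in \eqref{eq-lray} converges only conditionally. This is dispatched using the Paley--Wiener regularity of $\mcf g$ — which makes the restriction of $\mcf(\phi(D)g)$ to the hypersurface $\{\tau=-\theta\cdot\xi\}$ well defined and continuous in $\xi$ away from $\theta\parallel\xi$, a set of measure zero — together with the boundedness of $L$ on the spaces $\msh^s$; alternatively one may approximate $\phi\,\mcf g$ in $L^2$ by compactly supported functions and pass to the limit. Everything else is a direct computation.
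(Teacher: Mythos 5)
Your proposal is correct and follows essentially the same route as the paper: apply the Fourier slice theorem to both $L(\kappa f)$ and $L(\phi(D)\kappa f)$, observe that the slicing covector $(-\theta\cdot\xi,\xi)$ is never time-like so $\phi\equiv 1$ there, and use the continuity of $\mcf(\kappa f)$ (compact support) to identify the two transforms. Your extra care in justifying the slice formula for the non-compactly-supported function $\phi(D)(\kappa f)$ is a sensible refinement of a point the paper passes over silently, but it does not change the argument.
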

 \bpf
We use the Fourier slice theorem
\beqq\label{eq-fou1}
\begin{gathered}
\mcf_{y}(L (\kappa f))(\xi, \theta)   = \int_{\mbr^n} \int_{\mbr} e^{-iy\xi} \kappa(s)f(s, y + s\theta)dsdy  \\
= \mcf_{(t, x)} (\kappa f ) (-\theta \cdot \xi, \xi, \theta)
\end{gathered}
\eeqq
Here, $\kappa f\in L^2_{\comp}$. We apply the Fourier slice theorem to $L\phi(D) \kappa f$ and get 
\beqq\label{eq-fou2}
\begin{gathered}
\mcf_{y}(L  (\phi(D)\kappa f))(\xi, \theta)   = \phi(-\theta \cdot \xi, \xi )\mcf_{(t, x)} (\kappa f ) (-\theta \cdot \xi, \xi, \theta)
\end{gathered}
\eeqq
For any $\theta\in \mbs^{2}$, we see that $(-\theta \cdot \xi, \xi)$ is non time-like for all $\xi\in \mbr^3\backslash 0$. So $\phi(-\theta \cdot \xi, \xi) = 1$. Thus \eqref{eq-fou1} is equal to \eqref{eq-fou2} for $\xi\in \mbr^{3}$ and $\theta\in \mbs^2$ by the continuity of $\mcf_{(t, x)}(\kappa f)$. 
Taking  the inverse Fourier transform gives \eqref{eq-fou}.
\epf

Next, let $L^*$ be the $L^2$ adjoint of $L$ and $N  = L^*L$ be the normal operator. We recall from \cite{LOSU2} that   (for general dimension $n\geq 2$)
\beqq\label{eq-minnormal}
N f  = \int_{\mbr^{n+1}} K_N(t, x, t', x') f(t', x') dt'dx'
\eeqq
where the Schwartz kernel  
\beqq\label{eq-norker}
K_N(t, x, t', x') = \frac{\delta(t - t' - |x - x'|) + \delta(t - t' + |x - x'|)}{|x - x'|^{n - 1}}
\eeqq 
In particular, $N$ can be written as a Fourier multiplier 
\beqq\label{eq-normalfourier}
\begin{gathered}
N f(t, x) =   \int_{\mbr^{n+1}} e^{i(t, x)\cdot (\tau, \xi)} k(\tau, \xi) \hat f(\tau, \xi) d\tau d\xi
\end{gathered}
\eeqq
where 
\beqq\label{eq-ksym}
k(\tau, \xi) = C_n \frac{(|\xi|^2 - \tau^2)_+^{\frac{n - 3}{2}}}{|\xi|^{n-2}}, \quad C_n = 2\pi |\mbs^{n-2}|.
\eeqq
Here, for $t\in \mbr, \alpha \in \mbr$, we denote $t_+^\alpha$ the homogeneous distribution. We can write the normal operator as
\beq
N  f(t, x) =   \int e^{i(t - t', x - x')\cdot (\tau, \xi)} k(\tau, \xi)  f(t', x') d\tau d\xi dt'dx'
\eeq
This is a pseudo-differential operator away from $\tau^2 = |\xi|^2$ where the symbol $k(\tau, \xi)$ is singular. We will discuss later that the kernel belongs to the class of paired Lagrangian distributions, see also \cite{Wan1}.  However, it is not needed for Theorem \ref{thm-main1}. We recall the Sobolev estimate for $L, L^*$. 
\begin{lemma}[Theorem 1.2 of \cite{Wan1}]\label{prop-lrayest0}
For $n\geq 3, s\in \mbr$, the  light ray transform $L: H^s_{\comp}(\mbr^{n+1}) \rightarrow H_{\loc}^{s+ 1/2}(\mbr^n\times \mbs^{n-1}), s\in \mbr$ and its adjoint $L^*: H_{\comp}^{s}(\mbr^n\times \mbs^{n-1}) \rightarrow H^{s+1/2}_{\loc}(\mbr^{n+1}) $ are continuous. 
\end{lemma}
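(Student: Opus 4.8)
The plan is to reduce the whole statement to a single $L^2$-based bound for $L$ and then propagate it along the Sobolev scale and across the adjoint. The starting point is the Fourier slice identity (valid first for $f\in C_0^\infty(\mbr^{n+1})$, as in \eqref{eq-fou1} with the weight set to $1$),
\beq
\mcf_x(Lf)(\xi,\theta) = \hat f(-\theta\cdot\xi,\xi),
\eeq
where $\hat f$ is the full spacetime Fourier transform; thus for each $\theta$ the function $\widehat{Lf}(\cdot,\theta)$ is the restriction of $\hat f$ to the null hyperplane $\{\tau=-\theta\cdot\xi\}$, and the claimed $1/2$-smoothing is the standard half-derivative loss in restricting to a hypersurface, recovered after averaging over the $(n-1)$-parameter family of such hyperplanes. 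Note also that if $f$ is supported in $(0,T)\times B_R$ then $Lf$ is supported in $\overline{B_{R+T}}\times\mbs^{n-1}$, so the subscript $\loc$ is vacuous for $L$ and only enters for $L^*$ through a cutoff.

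First I would establish the base estimate $L:H^{-1/2}_{\comp}(\mbr^{n+1})\to L^2(\mbr^n\times\mbs^{n-1})$; by density it suffices to prove $\|Lf\|_{L^2}^2\le C\|f\|_{H^{-1/2}}^2$ for $f\in C_0^\infty$ supported in a fixed compact set. For such $f$, $Lf$ is smooth and compactly supported, so $\|Lf\|_{L^2}^2 = \langle Nf,f\rangle = \int k(\tau,\xi)|\hat f(\tau,\xi)|^2\,d\tau\,d\xi$ with $N=L^*L$ and $k$ as in \eqref{eq-ksym}. Here the hypothesis $n\geq3$ is used: the cone weight $(|\xi|^2-\tau^2)_+^{(n-3)/2}$ is then locally bounded, and since $k$ is supported in $\{|\tau|\leq|\xi|\}$ one gets $k(\tau,\xi)\leq C|\xi|^{n-3}/|\xi|^{n-2}=C|\xi|^{-1}\leq C'\langle(\tau,\xi)\rangle^{-1}$ on $\{|\xi|\geq1\}$, which bounds the high-frequency contribution by $C\|f\|_{H^{-1/2}}^2$. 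On $\{|\xi|<1\}$ (hence $|\tau|<1$) I would use the Paley--Wiener bound $|\hat f(\tau,\xi)| = |\langle f,\chi e^{-i(\cdot)\cdot(\tau,\xi)}\rangle|\leq C_{\supp f}\|f\|_{H^{-1/2}}$ with $\chi\equiv1$ near $\supp f$, together with $\int_{|\xi|<1}\int_{|\tau|\le|\xi|}k(\tau,\xi)\,d\tau\,d\xi<\infty$ (the inner $\tau$-integral equals a dimensional constant times $|\xi|^{n-2}$, cancelling the denominator of $k$). This is the only point where compact support of $f$ is used, and it is forced by the $|\xi|^{-1}$ blow-up of $k$ at $\xi=0$.

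Next, for the full Sobolev scale I would use the commutation identities obtained by differentiating \eqref{eq-lray} under the integral sign: $\p_{x_j}Lf=L(\p_{x_j}f)$ and, for tangential derivatives on $\mbs^{n-1}$, $\p_{\theta_j}Lf=\p_{x_j}L(\widetilde f)$ with $\widetilde f(t,x)=t\,f(t,x)$, whence $\p_x^\alpha\p_\theta^\beta Lf=\p_x^{\alpha+\beta}L(t^{|\beta|}f)$. Since $t^{|\beta|}f$ is compactly supported with $\|t^{|\beta|}f\|_{H^r}\le C\|f\|_{H^r}$ for every $r$ (multiply by a fixed cutoff equal to $t^{|\beta|}$ near $\supp f$), the base estimate yields $\|\p_x^\alpha\p_\theta^\beta Lf\|_{L^2}=\|L(\p_x^{\alpha+\beta}(t^{|\beta|}f))\|_{L^2}\le C\|f\|_{H^{|\alpha|+|\beta|-1/2}}$; taking $|\alpha|+|\beta|$ up to a nonnegative integer $N$ controls $\|Lf\|_{H^N}$ by $\|f\|_{H^{N-1/2}}$, and all $s\in\mbr$ follow by interpolation and by conjugating with powers of $(1-\Delta_x)$, which commute with $L$. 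The estimate for $L^*g(t,x)=\int_{\mbs^{n-1}}g(x-t\theta,\theta)\,d\theta$ is obtained by the analogous commutation (reducing $\p_z^\alpha L^*g$ to $L^*(Qg)$ with $Q$ a $z'$-differential operator of order $|\alpha|$ with $\theta$-smooth coefficients) followed by duality against \emph{compactly supported} test functions: for a fixed $\psi_0\in C_0^\infty(\mbr^{n+1})$ and $\psi\in L^2$ one has $\langle\psi_0 L^*(Qg),\psi\rangle=\langle Qg,L(\psi_0\psi)\rangle$, and since $\psi_0\psi$ is compactly supported the $L$-estimate gives $\|L(\psi_0\psi)\|_{H^{1/2}}\le C\|\psi\|_{L^2}$, so $\|\psi_0 L^*(Qg)\|_{L^2}\le C\|Qg\|_{H^{-1/2}}\le C\|g\|_{H^{|\alpha|-1/2}}$, and one sums as before.

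The hard part is conceptual rather than computational: $N=L^*L$ is \emph{not} an elliptic pseudodifferential operator. By the Fourier slice identity (cf.\ Proposition~\ref{prop-cut}) $L$ annihilates all time-like frequencies, the symbol $k(\tau,\xi)$ degenerates on the light cone $\tau^2=|\xi|^2$ (where its Schwartz kernel is only a paired Lagrangian distribution, see \cite{Wan1}), and $k$ has a $|\xi|^{-1}$ singularity at $\xi=0$. For the \emph{upper} $L^2$-bound, however, all three features are harmless: $n\geq3$ makes the cone weight locally bounded so that only the crude estimate $k\lesssim\langle\xi\rangle^{-1}$ on $\{|\xi|\ge1\}$ is needed, and the low-frequency blow-up is absorbed by compact support via Paley--Wiener. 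What remains is the routine transfer of $\mbs^{n-1}$-regularity into $\mbr^{n+1}$-regularity and the cutoff bookkeeping for $L^*$.
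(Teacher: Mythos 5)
Your base estimate and the bootstrap for $s\ge -1/2$ are sound, and they constitute a genuinely more elementary route than the one behind the cited result: the paper's own analogue (Proposition \ref{prop-sobo}, for the weighted transform) gets such bounds by exhibiting the normal operator as a paired Lagrangian distribution in $I^{-3/2,1/2}(\La_0,\La_1)$ and invoking the mapping properties of that class from \cite{DUV}, whereas you use only the explicit multiplier \eqref{eq-ksym}, the crude bound $k(\tau,\xi)\lesssim\langle(\tau,\xi)\rangle^{-1}$ valid for $n\ge 3$ on $|\xi|\ge 1$, a Paley--Wiener bound at low frequency (where compact support is genuinely needed because of the $|\xi|^{-1}$ singularity), and commutation identities. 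Two cosmetic repairs: tangential derivatives on $\mbs^{n-1}$ do not literally give $\p_x^\alpha\p_\theta^\beta Lf=\p_x^{\alpha+\beta}L(t^{|\beta|}f)$ but rather sums of terms $c(\theta)\,\p_x^{\gamma}L(t^{k}f)$ with $|\gamma|,k\le|\beta|$ and smooth bounded $c$, which is harmless; and to interpolate you should fix a cutoff $\chi$ and interpolate the operator $L\circ\chi$ between full Sobolev spaces rather than interpolating the compactly supported spaces themselves.

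The genuine gap is the range $s<-1/2$, which the statement includes. Interpolation only fills in between endpoints you actually have, and all of those have target at or above $L^2$; and conjugation by powers of $(1-\Delta_x)$ cannot lower $s$ further. To reduce $f\in H^s_{\comp}(\mbr^{n+1})$ with $s<-1/2$ to the proven range you would need $(1-\Delta_x)^{-r}$ to map $H^{s}(\mbr^{n+1})$ into $H^{s+2r}(\mbr^{n+1})$, i.e.\ $\langle(\tau,\xi)\rangle^{2r}\lesssim\langle\xi\rangle^{2r}$, which fails in the time-like region $|\tau|\gg|\xi|$: smoothing only in $x$ does not control roughness in $t$, and it also destroys the compact support your low-frequency step relies on. The missing range is recoverable inside your own framework, but by duality rather than conjugation: for $f$ supported in a fixed compact set and $g$ supported in a fixed compact set of $\mbr^n\times\mbs^{n-1}$, one has $\langle Lf,g\rangle=\langle f,\chi L^*g\rangle\le\|f\|_{H^s}\,\|\chi L^*g\|_{H^{-s}}\le C\|f\|_{H^s}\|g\|_{H^{-s-1/2}}$ by your $L^*$ estimate at order $-s-1/2>0$, and since $Lf$ is compactly supported, pairing against such $g$ controls $\|Lf\|_{H^{s+1/2}}$; symmetrically, $L^*$ at orders $s<-1/2$ follows from your $L$ bounds at order $-s-1/2>0$. (Alternatively, the identity $L\p_t=-\,\theta\cdot\nabla_x L$ on compactly supported distributions converts time derivatives into $x$-derivatives on the target and is the correct substitute for $(1-\Delta_x)$-conjugation.) As written, however, the case $s<-1/2$ is not established.
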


For $n = 3$, the analysis of $L$ and $N$ are simpler. Note that  
\beqq\label{eq-ksym1}
k(\tau, \xi) = 4\pi^2 \frac{\phi(\tau, \xi)}{|\xi|} 
\eeqq
It follows from \eqref{eq-normalfourier} that $N f  = N \phi(D) f$. 
Let $Q$ be defined by a Fourier multiplier 
\beq
\mcf(Q f)(\tau, \xi) = q(\tau, \xi) \hat f(\tau, \xi)
\eeq
where 
\beqq\label{eq-q}
q(\tau, \xi) = (4\pi^2)^{-1}   \phi(\tau, \xi)  |\xi|^{-1} 
\eeqq
Then we see that 
\beqq\label{eq-lrayinv}
QN \phi(D) f  = \phi(D) f
\eeqq
which means that $N$  is invertible on $\msh^s$.  
We need the mapping properties of $Q$ on Sobolev spaces. From \eqref{eq-q}, we can write $Q = \phi(D) Q_1$ where $Q_1$ is defined by Fourier multiplier as
\beq
Q_1 f= \mcf^{-1} ((2\pi)^{-2} |\xi|  \hat f(\xi)) 
\eeq
We see that $Q_1$ is an pseudo-differential operator of order $1$ on $\mbr^4$ (modulo a smoothing operator). We know that $\phi(D)$ is bounded on $H^s(\mbr^4), s\in \mbr$ thus 
\beq
Q: H_{\comp}^s(\mbr^{3+1})\rightarrow H_{\loc}^{s-1}(\mbr^{3+1})
\eeq
is bounded for $s\in \mbr.$

\section{Proof of Theorem \ref{thm-main1}}\label{sec-pfthm1}
We outline the proof of Theorem \ref{thm-main1}. We write \eqref{eq-X1} as 
\beq
X = \rho_T(\id - \id + (\id + T_1^{-1} K )^{-1}) T_1^{-1}  = L\kappa  + E
\eeq
where 
\beqq\label{eq-E}
\begin{gathered}
E = \rho_T(- \id + (\id + T_1^{-1} K )^{-1}) T_1^{-1}  
= \rho_T T_1^{-1}K (\id - T_1^{-1} K )^{-1}T_1^{-1}   
\end{gathered}
\eeqq
 To ``invert" $X$, we apply $L^*$ to $X$ to get $L^*X =   L^*L \kappa +  L^*E \kappa$. 
The operator $N = L^*L$ is invertible on $\msh^s$. In fact, it suffices to consider $L^*X$ on $\msh^s$ as follows. 

Consider the operator $T_1^{-1}$ defined in the proof of Theorem \ref{thm-exist}. For $\sigma$ depending only on $t$, we have
\beq
\begin{gathered}
T_1^{-1}f(t, x, \theta)  
 = \int_{0}^t \kappa(s) f(s, x+s\theta)ds
\end{gathered}
\eeq 
For any $t\in [0, T]$, let $\rho_t$ be the restriction operator to $\mcm_t = \{t\}\times \mbr^3$. We see that $\rho_t T_1^{-1}$ is also a  light ray transform with weight $\kappa$. The argument in Proposition \ref{prop-cut} implies that 
\beq
\rho_t T_1^{-1} f = \rho_t T_1^{-1}\phi(D) \kappa f.
\eeq
Because it holds for all $t$,  we have $X f = L \phi(D) \kappa f + E \phi(D)\kappa f$ 
which further gives 
\beqq\label{eq-XX}
\phi(D)L^*X f =  \phi(D) N \phi(D) \kappa f +    \phi(D)L^*E \phi(D)\kappa f
\eeqq
Now we can apply $Q$ and get 
\beqq\label{eq-QX}
Q\phi(D) L^*X f =  \phi(D) \kappa f + Q \phi(D) L^*E \phi(D)\kappa f
\eeqq
Recall that $\kappa$ is non-vanishing. It suffices to consider the right hand side of \eqref{eq-QX} as acting on functions in $\msh^s$, that is to consider an operator 
\beq
 \id + Q \kappa \phi(D) L^*E
\eeq
on $\msh^s$. The main part of the proof is to show that $Q \kappa \phi(D) L^*E : \msh^2\rightarrow \msh^2$ is compact. We know that $\kappa \phi(D)$ is bounded on $H^2(\mcm)$ and $Q$ is a pseudo-differential operator of order $1$. Thus, it suffices to show that $\p_t (L^*E), \p_{x_i} (L^*E): H^2 \rightarrow H^2, i = 1, 2, 3$ is compact which we prove below. 

We write $E$ in \eqref{eq-E} as
\beq
\begin{gathered}
E  = \rho_T T_1^{-1}K T_1^{-1} (\id - T_1^{-1} K )^{-1} 
\end{gathered}
\eeq
so that 
\beqq\label{eq-LE}
L^*E  = L^*  \rho_T T_1^{-1}  K T_1^{-1} (\id - T_1^{-1} K )^{-1}  
\eeqq
We observe that $\rho_T T_1^{-1}$ is almost $L$, however it is acting on $\mcm \times \mbs^2$ instead of $\mcm$. As in \cite{StUh}, we let $J$ be defined as $Jf(z, \theta) = f(z)$. Then $\rho_T T_1^{-1} J = L.$ Also, we note that $J$ is bounded $L^2(\mcm)\rightarrow L^2(\mcm\times \mbs^2)$ for example. To show the compactness, we adopt the idea in \cite{StUh} to decompose the operator \eqref{eq-LE}. 

For $k\in C^6(\mcm\times \mbs_{\theta}^2\times \mbs_{\theta'}^2)$, we write 
\beqq\label{eq-kaexpan}
k(z, \theta, \theta') = \sum_{j = 1}^\infty \Theta_j(\theta) k_j(z, \theta')
\eeqq
where $\Theta_j$ are spherical harmonics on $\mbs^2$ and $k_j$ are the corresponding Fourier coefficients. With our regularity assumptions, the series converges, see page 121 of \cite{StUh}.  
We remark that it suffices to impose weaker regularity assumptions on $k$ to get such expansion. But this is not our concern.  

Now we let $K_j, B_j$ be defined as
\beq
\begin{gathered}
K_j f(t, x, \theta) = \int \Theta_j(\theta) k_j(t, x, \theta') f(t, x, \theta') d\theta',  \\
B_j f(t, x) =  \int k_j(t, x, \theta) \kappa(t, x, s, \theta) f(s, x + s\theta, \theta) ds d\theta.
\end{gathered}
\eeq 
Using \eqref{eq-LE}, we get
\beqq\label{eq-LE1}
\begin{gathered}
L^*E = \sum_{j = 1}^\infty L^*  \rho_T T_1^{-1}  K_j T_1^{-1} (\id - T_1^{-1} K )^{-1}J  
= \sum_{j = 1}^\infty \underbrace{L^*  \rho_T T_1^{-1}  \Theta_j J}_{P_j^1} \underbrace{B_j   (\id - T_1^{-1} K )^{-1}J}_{P_j^2}
\end{gathered}
\eeqq
We analyze $P_j^1$ and $P_j^2$ separately.   
First,  we find the kernel of $P_j^1 = L^*L \Theta_j J$. We use the expression of $L^*$ in \cite{LOSU2} (see also Proposition \ref{prop-normal} in Section \ref{sec-lraywei} later) to get
\beq
\begin{gathered}
L^*L \Theta_j J f(t, x) =  \int_{\mbs^2} L\Theta_j J f(x - t\theta, \theta) d\theta  
 =  \int_{\mbs^2} \int_{\mbr} \Theta_j(\theta)  f(s, x + s\theta - t\theta) ds d\theta 
\end{gathered}
\eeq
Here, we split the integral in $s$ to $s\geq t$ and $s\leq t$. For $s\geq t$, we let $r = s- t$ and make a change of variable $y = x + r\theta$. The $s\leq t$ part can be treated similarly. We get 
\beq
\begin{gathered}
L^*L \Theta_j J f(t, x)  =  \int_{\mbr^3} [ \Theta_j(\frac{y-x}{|y-x|}) f(t + |x - y|, y) 
+  \Theta_j(\frac{x-y}{|x- y|}) f(t - |x - y|, y)] |x - y|^{-2}dy  
\end{gathered}
\eeq
Note that $L^*L\Theta_j f$ is compactly supported in $\mcm$ because $f$ is. 
Again, we write $f$ in Fourier series in $t$ as in Lemma \ref{lm-cpt} and use the notations there. We look at   
\beqq\label{eq-gn1}
\begin{gathered}
L^*L \Theta_j J g_n (t, x) =\int_{\mbr^3} \Theta_j(\frac{y-x}{|y-x|}) e^{i2 \pi n \frac{(t - |y - x|)}{T}} f_n(y) |x - y|^{-2}dy \\
+  \int_{\mbr^3}  \Theta_j(\frac{x-y}{|x- y|})e^{i2 \pi n\frac{ (t + |y - x|)}{T}} f_n(y) |x - y|^{-2}dy 
\end{gathered}
\eeqq
From Proposition \ref{prop-sinest1} (ii), we deduce that for fixed $t$ 
\beq
\|L^*L \Theta_j J g_n (t, \cdot)\|_{H^3(\mbr^3)}\leq C\|\Theta_j\|_{H^1} n^2\|f_n\|_{H^2(\mbr^3)}.
\eeq
thus
\beq
\|L^*L \Theta_j J f(t, \cdot) \|_{H^3(\mbr^3)} \leq C\|\Theta_j\|_{H^1}  \|f\|_{H^2(\mcm)}
\eeq
which implies that $L^*L \Theta_j J f \in L^2([0, T], H^3(\mbr^3))$. We see that $\p_{x^i} P_j^1, i = 1, 2, 3$ is bounded on $H^2(\mcm)$. We need to consider $\p_t P_j^1$ on $H^2$ for which we need three $t$ derivatives of $P_j^1$. For this purpose, note that each $t$ derivative of \eqref{eq-gn1} results in a factor $n$. So we need  $f\in H^5([0, T], H^2(\mbr^3))$ to get 
\beq
\|L^*L \Theta_j J f \|_{H^3(\mcm)} \leq C\|\Theta_j\|_{H^1}  \|f\|_{H^5([0, T], H^2(\mbr^3))}.
\eeq 
Thus $\p_t P_j^1, \p_x P_j^1$ are bounded from $H^5([0, T], H^2(\mbr^3))$ to $H^2(\mcm).$ 
This suggests us prove the following mapping property for $P_j^2.$ 
\begin{lemma}\label{lm-p2}
 $P_j^2: H^2(\mcm) \rightarrow H^5([0, T], H^2(\mbr^3))$ is compact.
\end{lemma}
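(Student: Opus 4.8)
The plan is to trace through the definition $P_j^2 = B_j(\id - T_1^{-1}K)^{-1}J$ and to exhibit, for each factor, a mapping property strong enough that the composition lands in $H^5([0,T],H^2(\mbr^3))$ with a gain of regularity, whence compactness follows from the Rellich--Kondrachov embedding on the relatively compact set $\mcv$. First I would recall that $(\id - T_1^{-1}K)^{-1}$ is bounded on $H^2(\mcv\times\mbs^2)$ by Theorem \ref{thm-exist} (this is where the genericity of $k$ enters, since the inverse exists only for $k$ in the open dense set $\mcu$), and that $J: H^2(\mcm)\to H^2(\mcv\times\mbs^2)$ is bounded. So the whole burden is on the operator $B_j$, which is given explicitly by
\beq
B_j f(t,x) = \int k_j(t,x,\theta)\,\kappa(t,x,s,\theta)\, f(s, x+s\theta,\theta)\, ds\, d\theta,
\eeq
and I would show $B_j: H^2(\mcv\times\mbs^2) \to H^5([0,T],H^2(\mbr^3))$ is \emph{bounded} with a gain of one spatial derivative — the extra compactness then comes from the compact support in $x$.

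The key step is the same change of variables already used in Lemma \ref{lm-cpt} and in the analysis of $P_j^1$: setting $y = x + s\theta$, so that $s = |y-x|$ and $\theta = (y-x)/|y-x|$, converts $B_j$ into a singular integral operator
\beq
B_j f(t,x) = \int_{\mbr^3} \frac{k_j(t,x,\frac{y-x}{|y-x|})\,\kappa(t,x,|y-x|,\frac{y-x}{|y-x|})}{|y-x|^{2}}\, f(|y-x|, y, \tfrac{y-x}{|y-x|})\, dy,
\eeq
to which Proposition \ref{prop-sinest1} applies with $n=3$. The point is that the numerator is $C^5$ in all its arguments (since $k\in C^6$, hence $k_j\in C^6$, and $\sigma\in C^6$ gives $\kappa\in C^6$), so for each fixed $t$ the map $g\mapsto B_j g(t,\cdot)$ gains one spatial derivative: it is bounded $H^2(\mbr^3\times\mbs^2)\to H^3(\mbr^3)$. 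To get control of the $t$-derivatives I would again expand $f$ in a Fourier series in $t$, $f = \sum_n f_n(x,\theta)e^{i2\pi nt/T}$, observe that $B_j$ acting on $g_n(t,x,\theta) = f_n(x,\theta)e^{i2\pi nt/T}$ produces an integrand with an extra oscillatory factor $e^{i2\pi n|y-x|/T}$ but with the same $C^5$ amplitude, so that each of the (up to five) $t$-derivatives of $B_j g_n$ costs one factor of $n$; thus $\|\p_t^\beta B_j g_n(t,\cdot)\|_{H^3(\mbr^3)}\le C\|k_j\|_{C^5} n^{|\beta|}\|f_n\|_{H^2}$ for $|\beta|\le 5$, and summing in $n$ (using Plancherel in $t$ as in Lemma \ref{lm-cpt}) gives $\|B_j f\|_{H^5([0,T],H^3(\mbr^3))}\le C\|f\|_{H^5([0,T],H^2(\mbr^3))}$. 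But wait — the input $f$ to $B_j$ is only in $H^2(\mcv\times\mbs^2)$, not in $H^5$ in $t$; the resolution, exactly as in Lemma \ref{lm-cpt}, is that a careful bookkeeping of which derivatives land where lets one trade spatial and angular regularity for temporal regularity through the relation $s=|y-x|$, so that $H^2$ regularity in $(x,\theta)$ of $f$ feeds enough regularity into the $t$-slices of $B_j f$; more precisely one differentiates $B_j f(t,x)$ in $t$ up to five times, each derivative hitting either $k_j$, $\kappa$, or (through $s = |y-x|$, which does not involve $t$ at all) nothing, so in fact the $t$-derivatives of $B_j f$ only cost derivatives of the $C^5$ amplitude and of $f$ in its \emph{first} slot, and the compact support in $s\in[0,T]$ plus $f\in H^2$ in that slot is exactly enough.

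The main obstacle, as in the companion estimates, is the bookkeeping at the singular kernel: one must verify that after the polar change of variables every derivative of $B_j f$ that is needed (five in $t$, three in $x$) falls under the hypotheses of Proposition \ref{prop-sinest1}, i.e. that the amplitude retains $C^2$ regularity in the variables $(x,y,r,\theta)$ after up to $m=2$ extra spatial derivatives are moved onto it or onto $f$, and that the resulting family of operators is uniformly bounded because the supports stay inside the fixed compact set $\mcv$. Once the bound $B_j: H^2(\mcv\times\mbs^2)\to H^5([0,T],H^3(\mbr^3))$ is established, composing with the bounded operators $(\id-T_1^{-1}K)^{-1}$ and $J$ yields $P_j^2: H^2(\mcm)\to H^5([0,T],H^3(\mbr^3))$ bounded; since $H^5([0,T],H^3(\mbr^3))\hookrightarrow H^5([0,T],H^2(\mbr^3))$ is compact on the relatively compact spatial domain (Rellich), $P_j^2$ is compact as a map into $H^5([0,T],H^2(\mbr^3))$, which is the claim. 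I would also remark that this is precisely the mapping property matched to the bound $\p_t P_j^1,\p_x P_j^1: H^5([0,T],H^2(\mbr^3))\to H^2(\mcm)$ proved just before the lemma, so that the composition $P_j^1 P_j^2$, and hence $\p_t(L^*E),\p_{x_i}(L^*E)$ after summing in $j$, is compact on $H^2$.
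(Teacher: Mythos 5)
Your argument has a genuine gap at its decisive step. You route all of the compactness through the claim that $B_j$ is bounded from $H^2(\mcv\times\mbs^2)$ into $H^5([0,T],H^3(\mbr^3))$ and that the embedding $H^5([0,T],H^3(\mbr^3))\hookrightarrow H^5([0,T],H^2(\mbr^3))$ is compact by Rellich. That embedding is \emph{not} compact, because there is no gain in the time variable: taking a fixed $g\in H^3$ supported in $\Omega$ and $u_n(t,x)=n^{-5}e^{i2\pi nt/T}g(x)$, the sequence $u_n$ is bounded in $H^5([0,T],H^3(\mbr^3))$, while $\|\p_t^5(u_n-u_m)\|_{L^2([0,T],H^2)}$ is bounded below for $n\neq m$ by orthogonality of the exponentials, so no subsequence converges in $H^5([0,T],H^2(\mbr^3))$. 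Since your factorization $P_j^2=B_j(\id-T_1^{-1}K)^{-1}J$ contains only bounded factors, compactness does not follow. There is also a secondary problem with the bound you assert for $B_j$: the integrand contains $f(|y-x|,y,\frac{y-x}{|y-x|})$, so Proposition \ref{prop-sinest1} does not apply with $f$ as the integrated density; after the Fourier-series reduction the oscillatory factor $e^{i2\pi n|y-x|/T}$ makes every spatial derivative used for the gain cost a power of $n$, which the $H^2$ regularity of the input can only partially absorb. This is why one should only expect $B_j:H^2(\mcv\times\mbs^2)\to H^5([0,T],H^2(\mbr^3))$ (boundedness, no spatial gain), which by itself cannot give compactness.

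The paper's proof avoids both issues by decomposing further, $P_j^2=B_jJ+B_j(\id-T_1^{-1}K)^{-1}KT_1^{-1}J$, and extracting compactness from different places in the two terms. In the second term the compact factor is $KT_1^{-1}J$: by the change of variables $y=x+s\theta'$, the Fourier series in $t$ and Proposition \ref{prop-sinest1}, it is bounded from $H^2(\mcm)$ into $H^3(\mcm\times\mbs^2)$, hence compact into $H^2(\mcm\times\mbs^2)$; the operator $B_j$ then only needs to be \emph{bounded} into $H^5([0,T],H^2(\mbr^3))$, which uses exactly your (correct) structural observation that $t$-derivatives of $B_j$ fall only on $k_j$ and $\kappa$. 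In the first term, $B_jJ$ acts on a function with no angular slot, no $n$-losses occur, and one obtains the stronger bound into $H^6([0,T],H^3(\mbr^3))$; the embedding of that space into $H^5([0,T],H^2(\mbr^3))$ is compact precisely because there is a simultaneous gain in $t$ and $x$. So your observation about the $t$-derivatives is the right key, but without the extra decomposition (or some other compact factor) the compactness claim is unsupported.
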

 \bpf
We decompose $P_j^2$ as follows
\beqq\label{eq-p2}
P^2_j = B_jJ + B_j (\id - T_1^{-1} K )^{-1} K T_1^{-1}J 
\eeqq
We first show that $K T_1^{-1}J$ is compact from $H^2(\mcm)$ to $H^2(\mcm\times \mbr^2)$. We compute 
\beq
\begin{gathered}
K T_1^{-1} J f(t, x, \theta) = \int_{\mbs^{n-1}}k(t, x, \theta, \theta') \int_{0}^t  e^{- \int_s^t \sigma(\tilde s, x + \tilde s\theta', \theta')d\tilde s} f(s, x+s\theta')ds d\theta'
\end{gathered}
\eeq
Here, we included a general weight in $T_1^{-1}$ instead of $\kappa(s)$ because we need it later for Section \ref{sec-pf2}. 
Set $y = x + s\theta'$. We get $s = |y - x|, \theta' = (y - x)/|y - x|$ and 
\beq
\begin{gathered}
K T_1^{-1} J f(t, x, \theta) = \int_{\mcv} \frac{k(t, x, \theta, \frac{y-x}{|y-x|}) \kappa(t, x, |y-x|, \frac{y-x}{|y-x|})}{|y - x|^{n-1}} f(|y-x|, y)dy
\end{gathered}
\eeq
We use the Fourier  series of $f$ in $t$   as in Lemma \ref{lm-cpt} and the notations there to get 
\beq
\begin{gathered}
K T_1^{-1} J g_n (t, x, \theta) = \int_{\mcv} \frac{\alpha(t, x, y, \theta)}{|y - x|^{n-1}} e^{i2 \pi n \frac{|y - x|}{T}} f_n(y)dy 
\end{gathered}
\eeq
For fixed $t, \theta$,  it follows from Proposition \ref{prop-sinest1} (i) that  
\beq
\|K T_1^{-1} J g_n\|_{H^1} \leq C n^2 \|f_n\|_{L^2(\mbr^n\times \mbs^{n-1})}
\eeq
Summing up in $n$, we get for fixed $t, \theta$
\beq
\|K T_1^{-1} J f(t, \cdot, \theta)\|^2_{H^3(\mbr^3)} \leq C \sum_{n=-\infty}^\infty n^4 \|f_n\|^2_{L^2(\mbr^n\times \mbs^{n-1})} \leq C \|f\|^2_{H^2(\mcm)}.
\eeq
By considering $\p_{t, \theta}^\beta (K T_1^{-1} J f)$ for $|\beta|\leq 3$, we see that $KT_1^{-1}J$ is bounded from $H^2(\mcm)$ to $H^3(\mcm\times \mbs^2)$ thus compact to $H^2(\mcm\times \mbs^2)$.  We know that $(\id - T_1^{-1} K )^{-1}$ is bounded on $H^2$. It remains to consider $B_j$ on $H^2(\mcm\times \mbs^2)$ which  is
\beq
\begin{gathered}
B_j f(t, x) = \int k_j(t, x, \theta) \kappa(t, x, s, \theta) f(s, x + s\theta, \theta) ds d\theta\\
 =  \int \frac{k_j(t, x, \frac{y - x}{|y-x|}) \kappa(t, x, |x - y|, \frac{y - x}{|y-x|})}{|x - y|^2} f(|x - y|, y, \frac{y - x}{|y-x|}) dy
 \end{gathered}
\eeq
By considering the $t$ derivative which hits $k_j, \kappa$, we can show that $B_j$ is bounded from $H^2(\mcm\times \mbs^2)$ to $H^5([0, T]\times H^2(\mbr^3))$ using Proposition \ref{prop-sinest1}. The bound depends on $\|k_j\|_{C^5}$. It is very important that the $t$ derivatives do not produce $n$ factors as in the analysis for $P_j^1$. This is why we can gain regularity in $t$.  We thus proved that the second term of \eqref{eq-p2} is compact. 

Next, consider $B_j J$ in \eqref{eq-p2} which is  
\beq
B_j Jf(t, x) =   \int \frac{k_j(t, x, \frac{y-x}{|y-x|}) \kappa(t, x, |y-x|, \frac{y-x}{|y-x|})}{|y - x|^{2}} f(|y-x|, y)dy
\eeq
We use Proposition \ref{prop-sinest} and consider the $t$ derivative of $B_j J$ as above to conclude that $B_j J$ is bounded from $H^2(\mcm)$ to $H^6([0, T], H^3(\mbr^3))$, hence compact from $H^2(\mcm)$ to $H^5([0, T], H^2(\mbr^3))$. This shows the compactness of $P_2^j$  with a bound $C\|k_j\|_{C^6}$. This completes the proof of the lemma. 
\epf

Using these estimates, we see that $P_j^1, P_j^2$ are bounded on $H^2(\mcm)$ with a bound $\|\kappa_j\|_{C^6}\|\Theta_j\|_{H^1}$. Now consider derivatives of $k$ in $t, x, \theta$ variables in the series \eqref{eq-kaexpan}.  Using the argument in the end of page 121 of \cite{StUh}, we see that the series 
\beq
\sum_{j=1}^\infty \|\kappa_j\|_{C^6}\|\Theta_j\|_{H^1} <\infty
\eeq
with the regularity assumption of $k$.  Thus the series in \eqref{eq-LE} converges uniformly. This shows that $L^*E$ is compact on $H^2. $

\bpf[Completion of the proof of Theorem \ref{thm-main1}]
We use \eqref{eq-XX}
\beq
 L^*X f =  \phi(D) L^*L  \phi(D) \kappa f +   L^*E \phi(D)\kappa f 
\eeq
and apply $Q$ to get 
\beq
QL^*X f =    \phi(D) \kappa f + Q \kappa^{-1}L^*E \phi(D)\kappa f
\eeq
We recall that the function $\kappa$ depends on $\sigma.$ For $\la\in \mbc$ in a neighborhood of $[0, 1]$, we replace the scattering kernel $k$ by $\la k$ and denote the operator $K$ by $K(\la)$. Then we see that the operator in \eqref{eq-E}, now denoted by $E(\la)$ 
\beqq\label{eq-E1}
\begin{gathered}
E(\la)  
= \rho_T T_1^{-1}K(\la) (\id - T_1^{-1} K(\la) )^{-1}T_1^{-1}   
\end{gathered}
\eeqq
is a family of operators meromorphic in $\la.$ Suppose $E(\la)$ is holomorphic in $\mcu \subset \mbc$. For $\la = 0$, we know that $\id + \mce(0)$ is invertible. Thus by the analytic Fredholm theorem \cite[Theorem VI.14]{ReSi1}, we conclude that $\id + \mce(\la)$ with $\mce(\la) =  Q L^*E(\la)$ is invertible for $\la$ in $\mcu\backslash \mcs$ where $\mcs$ is a discrete set.  This implies that $Q L^*X$ is invertible for an open dense subset of $k$. Also, we obtain the stability estimate 
\beq
\|\phi(D) \kappa f\|_{H^2(\mcm)} \leq \|L^*X f\|_{H^3(\mcm)} \leq C\|Xf\|_{H^{5/2}(\mcc)}
\eeq
using the estimate of $L^*$ and $Q$ in Section \ref{sec-lray}.

Finally, for $f\in H^2_{\comp}(\mcm)$, we know that $Xf\in H^{5/2}(\mcc)$. If $X f = 0$, we get $\phi(D) \kappa f = 0$. By taking Fourier transform, we see that $\mcf(\kappa f)(\zeta) = 0$ for $\zeta\in \Gamma^{sp}$. But $\kappa f$ is compactly supported so $\mcf(\kappa f)(\zeta)$ is analytic in $\zeta$. We conclude that $\kappa f  = 0$ so $f = 0.$ This proves the uniqueness. 
\epf

\section{The light ray transform with weights}\label{sec-lraywei}
The rest of the paper is devoted to the proof of Theorem \ref{thm-main2}. It is worth pointing out the differences of the proof to Theorem \ref{thm-main1}.  
Following the notations in Section \ref{sec-direct}, we can represent $X$ as a perturbation of a weighted light ray transform $L_\kappa$ but it cannot be reduced to the Minkowski light ray transform when $\sigma$ depends on the $x$ variable. There are some results on the injectivity of the weighted light ray transform for analytic weights, see \cite{Ste1}. But the description of the kernel is unclear so the argument for Theorem \ref{thm-main1}, especially Proposition \ref{prop-cut} does not seem to work. 

The way we prove Theorem \ref{thm-main2} is to use the constraint of the Cauchy problem to obtain some stability estimate for the weighted light ray transform. Roughly speaking, for the Cauchy problem \eqref{eq-cauchy}, we can find a parametrix and  represent the solution as $f = E(f_1, f_2)$. We then show that the composition $L\chi_0 E$ can be microlocally inverted with a compact remainder. Then the analytic Fredholm argument can be applied. We remark that for the light ray transform without weights, similar stability estimates have been studied in \cite{VaWa, Wan1}. 

The desired stability estimate relies on the microlocal structure of the light ray transform which we study in this section. 
Let $\kappa(t, x, \theta)\in C^\infty(\mbr\times \mbr^3\times \mbs^2)$ be real valued with compact support in $t, x$ variables. We consider a weighted light ray transform 
\beqq\label{eq-lrayw}
L_\kappa f(x, \theta) =  \int_{-\infty}^\infty \kappa(s, x+ s\theta, \theta) f(s, x+s\theta)ds
\eeqq
for $f\in C_0^\infty(\mbr^4)$. 
Note that  \eqref{eq-T1} 
is of the form \eqref{eq-lrayw}. We first compute the Schwartz kernel of the normal operator.
\begin{prop}\label{prop-weiker}
Let $N_\kappa  = L_\kappa^*L_\kappa$. The Schwartz kernel of $N_\kappa$ as a distribution on $\mbr^4\times \mbr^4$ is given by
\beqq\label{eq-lrayker}
K(t, x, s, y) = \frac{\kappa(t, x, \frac{y-x}{|y - x|}) \kappa(s, y, \frac{x - y}{|x - y|})}{|y - x|^2} (\delta(t - s - |y - x|) + \delta(t - s + |y - x|)).
\eeqq
\end{prop}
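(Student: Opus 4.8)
The plan is to compute $N_\kappa = L_\kappa^* L_\kappa$ directly from the defining integral \eqref{eq-lrayw}, following the same scheme used for the unweighted normal operator recalled in \eqref{eq-norker}, but keeping track of the two weight factors. First I would write down the $L^2$-adjoint: pairing $L_\kappa f$ against a test function $h(x,\theta)$ on $\mbr^3\times\mbs^2$ and interchanging the order of integration shows that
\beq
L_\kappa^* h(t, x) = \int_{\mbs^2} \kappa(t, x, \theta)\, h(x - t\theta, \theta)\, d\theta,
\eeq
since the light ray through $(t,x)$ with direction $\theta$ is parametrized as $l_{y,\theta}(s) = (s, y + s\theta)$ with $y = x - t\theta$ at the time level $s = t$. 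This is consistent with the formula for $L^*$ quoted from \cite{LOSU2} when $\kappa \equiv 1$.

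Next I would compose: applying $L_\kappa^*$ to $L_\kappa f$ gives
\beq
N_\kappa f(t, x) = \int_{\mbs^2}\int_{-\infty}^\infty \kappa(t, x, \theta)\,\kappa\big(r, x - t\theta + r\theta, \theta\big)\, f\big(r, x - t\theta + r\theta\big)\, dr\, d\theta.
\eeq
Here the inner light ray is $s\mapsto (s, y + s\theta)$ with $y = x - t\theta$, so the spatial point along it at parameter $r$ is $x + (r-t)\theta$ and the direction argument of the second weight is $\theta$. Now I substitute $y' = x + (r-t)\theta$ and pass to "polar-type" coordinates adapted to the two branches $r \gtrless t$: writing $\rho = |r - t| \ge 0$ and $\theta = \pm (y' - x)/|y' - x|$, the Jacobian of $(r,\theta)\mapsto y'$ over $\mbr^3$ produces the factor $|y' - x|^{-2}$, exactly as in the standard spherical-means computation. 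Splitting into $r > t$ (where $y' - x = \rho\theta$, so $\theta = (y'-x)/|y'-x|$ and $r = t + |y'-x|$) and $r < t$ (where $\theta = (x - y')/|x - y'|$ and $r = t - |y'-x|$), and recording that $\kappa(t,x,\theta)$ becomes $\kappa(t,x,\frac{y'-x}{|y'-x|})$ in the first case and $\kappa(t,x,\frac{x-y'}{|x-y'|})$ in the second, with the second weight $\kappa(r, y', \cdot)$ evaluated at the opposite direction, assembles precisely the kernel \eqref{eq-lrayker} with the two delta functions $\delta(t - s \mp |y - x|)$ encoding the two branches.

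The main obstacle, such as it is, is bookkeeping: getting the direction arguments of the two weight factors right on each of the two branches (they are antipodal to each other on a given branch, and swap roles between branches), and confirming that the change of variables $(r,\theta)\mapsto y'$ is a diffeomorphism onto $\mbr^3\setminus\{x\}$ on each branch with Jacobian $\rho^{2}\, dr\, d\theta = d y'$ up to sign, which is what converts $dr\, d\theta$ into $|y-x|^{-2}\,\delta(t - s \mp |y-x|)\, ds\, dy$ as a Schwartz kernel on $\mbr^4\times\mbr^4$. Since $\kappa$ is smooth with compact support in $t,x$ and the singularity is only at $y = x$ of the integrable/distributional order already present in the unweighted case, no new analytic difficulty arises; the computation is entirely parallel to \eqref{eq-minnormal}--\eqref{eq-norker} with the symmetric weight $\kappa(t,x,\cdot)\kappa(s,y,\cdot)$ carried along. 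I would present the branch $r > t$ in detail and remark that $r < t$ is identical after replacing $\theta$ by $-\theta$.
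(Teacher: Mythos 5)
Your strategy coincides with the paper's own proof: compute $L_\kappa^*$ by duality, compose with $L_\kappa$, split the line integral at $s=t$, and pass to the spatial variable $y=x+(s-t)\theta$ with Jacobian $|y-x|^{-2}$, reading off one delta factor per branch. Your adjoint formula and the diffeomorphism/Jacobian step are correct. The flaw sits exactly at the step you flag as the main obstacle, the direction bookkeeping. In your composed expression $\int_{\mbs^2}\int_{\mbr}\kappa(t,x,\theta)\,\kappa\bigl(r,\,x+(r-t)\theta,\,\theta\bigr)\,f\bigl(r,\,x+(r-t)\theta\bigr)\,dr\,d\theta$ both weight factors carry the \emph{same} direction argument $\theta$: the $\theta$ appearing in $L_\kappa^*$ is the very variable at which $L_\kappa f(\cdot,\theta)$ is evaluated, and the weight inside $L_\kappa$ is taken at the ray direction, which is again $\theta$. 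Hence, substituting, on the branch $r>t$ (where $\theta=\frac{y-x}{|y-x|}$) you get $\kappa(t,x,\frac{y-x}{|y-x|})\,\kappa(s,y,\frac{y-x}{|y-x|})$, and on the branch $r<t$ (where $\theta=\frac{x-y}{|x-y|}$) you get $\kappa(t,x,\frac{x-y}{|x-y|})\,\kappa(s,y,\frac{x-y}{|x-y|})$: within each branch the two direction arguments are \emph{equal}, and they flip together from one branch to the other. They are not antipodal to one another on a given branch, as you assert, so your computation does not in fact produce a single common factor $\kappa(t,x,\frac{y-x}{|y-x|})\,\kappa(s,y,\frac{x-y}{|x-y|})$ multiplying the sum of the two deltas.

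This is precisely what the paper's proof obtains: its two terms $I_+$ and $I_-$ carry \emph{different} weight products, namely $\kappa(t,x,\frac{y-x}{|y-x|})\,\kappa(s,y,\frac{y-x}{|y-x|})$ paired with $\delta(s-t-|y-x|)$ and $\kappa(t,x,\frac{x-y}{|x-y|})\,\kappa(s,y,\frac{x-y}{|x-y|})$ paired with $\delta(s-t+|x-y|)$. The single-factor antipodal form \eqref{eq-lrayker} agrees with this outcome only when $\kappa$ is even in its direction argument (for instance when $\kappa$ does not depend on $\theta$, as for the weight coming from a $\sigma$ depending only on $t$); for a general weight the branch-dependent form is what the computation yields, and steering the bookkeeping toward the displayed formula is exactly the error to avoid. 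Once you record the weights branch by branch as above, your argument is complete and is essentially identical to the paper's.
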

\bpf
We start by computing $L^*_\kappa$. Let $g\in C_0^\infty(\mbr^3\times \mbs^2)$. We have
\beq
\begin{gathered}
\langle L_\kappa f, g\rangle = \int_{\mbs^2}  \int_{\mbr^3} \int_\mbr \kappa(s, x + s\theta, \theta) f(s, x + s\theta) g(x, \theta) ds dx d\theta\\
 = \int_{\mbr^3} \int_\mbr  \int_{\mbs^2} \kappa(s, y, \theta) f(s, y) g(y - s\theta, \theta)  d\theta ds dy 
\end{gathered}
\eeq 
where we made a change of variable via  $x + s\theta = y$. Thus, we get 
\beq
L_\kappa^* g(s, y) = \int_{\mbs^2} \kappa(s, y, \theta)  g(y - s\theta, \theta) d\theta
\eeq
Next, we compute 
\beqq\label{eq-int1}
\begin{gathered}
L_\kappa^*L_\kappa f(t, x) = \int_{\mbs^2} \kappa(t, x, \theta) L_\kappa f(x - t\theta, \theta) d\theta\\
 = \int_{\mbs^2} \int_\mbr \kappa(t, x, \theta)  \kappa(s, x - t\theta + s\theta, \theta)f(s, x - t\theta + s\theta)  ds d\theta 
\end{gathered}
\eeqq
Then we split the integral to $s\geq t$ and $s\leq t.$ For $s \geq t,$ we let $r = s - t\geq 0$ and use polar coordinates for $\mbr^3$ centered at $x$: $y =x + r\theta$.  We see that $\theta = (y - x)/|y - x|$ and $r = |y - x|$. In this case, the integral \eqref{eq-int1} denoted by $I_+$ below is 
\beq
\begin{gathered}
I_+ (t, x)  = \int_{\mbs^2} \int_0^\infty \kappa(t, x, \theta)  \kappa(r+ t, x + r\theta, \theta)f(r+ t, x+ r\theta)  dr d\theta \\
 = \int_{\mbr^3} \frac{\kappa(t, x, \frac{y - x}{|y-x|})  \kappa(t + |y-x|, y, \frac{y - x}{|y-x|})}{|y - x|^2}f(t+ |y-x|, y) dy\\
  = \int_{\mbr^4} \frac{\kappa(t, x, \frac{y - x}{|y-x|})  \kappa(s, y, \frac{y - x}{|y-x|})}{|y - x|^2} \delta(s - t - |y-x|) f(s, y) ds dy
\end{gathered}
\eeq

For $s\leq t$ in \eqref{eq-int1}, we follow the same procedure. Let $r = t - s\geq 0$ and we change $\theta$ to $-\theta$ in the integral. Then we use polar coordinate $y = x + r\theta$. If we denote the integral of \eqref{eq-int1} by $I_-$, we get 
\beq
\begin{gathered}
I_- (t, x)  = \int_{\mbs^2} \int_0^\infty \kappa(t, x, -\theta)  \kappa(s, x + r\theta, -\theta)f(s, x + r\theta)  dr d\theta \\
 = \int_{\mbr^3} \frac{\kappa(t, x, \frac{x - y}{|x-y|})  \kappa(t - |x-y|, y, \frac{x - y}{|x-y|})}{|y - x|^2}f(t - |y-x|, y) dy\\
  = \int_{\mbr^4} \frac{\kappa(t, x, \frac{x - y}{|x-y|})  \kappa(s, y, \frac{x - y}{|x-y|})}{|y - x|^2} \delta(s - t + |x-y|) f(s, y) ds dy
\end{gathered}
\eeq

Adding up $I_+$ and $I_+$, we obtain the kernel $K$. 
\epf
   
Next, we will show that the Schwartz kernel of $N_\kappa$ is a paired Lagrangian distribution from which we will derive Sobolev estimate for $N_\kappa$ and $L_\kappa$. For globally hyperbolic Lorentzian manifolds without conjugate points,  this was recently proved in \cite{Wan2} for the light ray transform without weight using the calculation of the Schwartz kernel of the normal operator. The method works the same for  the weighted light ray transform given Proposition \ref{prop-weiker}.  
 
We briefly recall the notion of paired Lagrangian distribution, see e.g.\ \cite{DUV}. Let $\mcx$ be a $C^\infty$ manifold of dimension $n$. 
Let $\La_0, \La_1$ be conic Lagrangian submanifolds of $T^*(\mcx\times \mcx)\backslash 0$.  Suppose that $\La_1$ intersects $\La_0$ cleanly at a co-dimension $k$, $1\leq k\leq 2n-1$ submanifold $\Sigma = \La_0\cap \La_1$, namely $T_p(\La_0\cap \La_1) = T_p(\La_0)\cap T_p(\La), \forall p\in \Sigma.$  It is known that all such intersecting pairs $(\La_0, \La_1)$ are locally symplectic diffeomorphic to each other. It suffices to consider the following model problem. Let $\tilde \mcx = \mbr^n = \mbr^k\times \mbr^{n-k}, 1\leq k\leq n-1$, and use coordinates $x = (x', x''), x'\in \mbr^k, x''\in \mbr^{n-k}$. Let $\tilde \La_0 = \{(x, \xi, x, -\xi)\in T^*(\tilde \mcx\times \tilde \mcx)\backslash 0 : \xi\neq 0\}$ be the punctured conormal bundle of $\diag$ in $T^*(\tilde \mcx \times \tilde \mcx)$, and 
\beq
\tilde \La_1 = \{(x, \xi, y, \eta)\in T^*(\tilde \mcx\times \tilde \mcx)\backslash 0: x'' = y'', \xi' = \eta' = 0, \xi'' = \eta'' \neq 0\} 
\eeq
which is the punctured conormal bundle to $\{(x, y)\in \tilde \mcx\times \tilde \mcx: x'' = y''\}$.  The two Lagrangians intersect cleanly at $\tilde \Sigma = \{(x, \xi, y, \eta)\in T^*(\tilde \mcx\times \tilde \mcx)\backslash 0 : x'' = y'', \xi'' = \eta'', x' = y', \xi' = \eta' = 0\}$ which is of co-dimension $k.$ For this model pair, the paired Lagrangian distribution $I^{p, l}(\mbr^n\times \mbr^n; \tilde \La_0, \tilde \La_1)$ consists of oscillatory integrals 
\beqq\label{eq-upair}
u(x, y) = \int e^{i[(x' - y' - s)\cdot \eta' + (x'' - y'')\cdot \eta'' + s\cdot \sigma]}a(s, x, y, \eta, \sigma) d\eta d\sigma ds
\eeqq
where $a$ is a product type symbol  which is a $C^\infty$ function and satisfies 
\beqq\label{eq-symord}
|\p_\eta^\alpha \p_\sigma^\beta \p_s^\theta \p^\gamma_x \p^\delta_y a(s, x, y, \eta, \sigma)| \leq C (1 + |\eta|)^{p + k/2 -|\alpha|}(1 + |\sigma|)^{l   - k/2 - |\beta|}
\eeqq
for multi-indices $\alpha, \beta, \theta, \gamma, \delta$ over each compact set $\mck$ of $\mbr^n\times \mbr^n \times \mbr^k.$ The constant $C$ depends on the indices and $\mck.$ The set of product type symbols is denoted by $S^{p, l}(\mbr^n\times \mbr^n; \mbr^n; \mbr^k)$. 
We use the notation $I^{p, l}(\mbr^n\times \mbr^n; \tilde \La_0, \tilde \La_1)$ to denote the space of operators $A: \mce'(\mbr^n; \Omega^\ha_{\mbr^n}) \rightarrow \mcd'(\mbr^n; \Omega^\ha_{\mbr^n})$ where $\Omega^\ha_{\mbr^n}$ denotes the line bundle of half-densities on $\mbr^n$,  whose Schwartz kernel $K_A$ is a paired Lagrangian distribution with values in $\Omega_{\mbr^n\times \mbr^n}^\ha$. Away from $\tilde \Sigma = \tilde \La_0\cap \tilde \La_1$, $u \in I^{p+l}(\mbr^n\times \mbr^n; \tilde \La_0)$ and  $u \in I^{p}(\mbr^n\times \mbr^n; \tilde \La_1)$ using H\"ormander's notion of Lagrangian distributions, see \cite[Section 25.1]{Ho4}.

 There is an equivalent description of the paired Lagrangian distribution \eqref{eq-upair} introduced in \cite[Section 5]{DUV} that is convenient for our purpose. Modulo $C_0^\infty(\mbr^n\times \mbr^n)$, \eqref{eq-upair} can be written as 
 \beqq\label{eq-upair1}
u(x, y) = \int e^{i[(x' - y')\cdot \eta' + (x'' - y'')\cdot \eta'']}b(x, y, \eta) d\eta 
\eeqq
where $b$ satisfies the following estimates. First, in the region $|\eta'|\leq C |\eta''|, |\eta''|\geq 1$, $b$ satisfies
\beq
|(Qb)(x, y, \eta)|\leq C\langle \eta''\rangle^{p+k/2} \langle \eta'\rangle^{l - k/2}
\eeq
for all $Q$ which is a finite product of differential operators of the form $D_{\eta'}, \eta'_j D_{\eta'_m}, $ $ \eta''_j D_{\eta_m''}$. Second, in the region $|\eta''|\leq C |\eta'|, |\eta'|\geq 1$, $b$ satisfies the standard regularity estimate 
\beq
|(Qb)(x, y, \eta)|\leq C\langle \eta'\rangle^{p+ l}
\eeq
for all $Q$ which is a finite product of differential operators of the form $\eta_j'D_{\eta'_m},$ $\eta'_j D_{\eta''_m}.$ We refer the readers to \cite[Section 5]{DUV} for the argument of the equivalence of \eqref{eq-upair} and \eqref{eq-upair1}, and a description of the principal symbols \cite[Lemma 5.3]{DUV}. 

We will show that $N_\kappa$ is a paired Lagrangian distribution associated with two Lagrangians $\La_0, \La_1$ described as follows. First, let 
 \beqq\label{eq-lag1}
 \begin{gathered}
 \La_0 = \{(t, x, \tau, \xi; t', x', \tau', \xi')\in T^*\mbr^{n+1}\backslash 0\times T^*\mbr^{n+1}\backslash 0: \\
 t = t', x = x', \tau = -\tau', \xi = -\xi'\}
 \end{gathered}
 \eeqq
which is the punctured conormal bundle of the diagonal in $\mbr^{n+1}\times \mbr^{n+1}$ and 
 \beqq\label{eq-lag2}
 \begin{gathered}
 \La_1 = \{(t, x, \tau, \xi; t', x', \tau', \xi')\in T^*\mbr^{n+1}\backslash 0\times T^*\mbr^{n+1}\backslash 0: \\
 x = x' + (t - t')\xi/|\xi|, \tau = \pm|\xi|,  \tau' = -\tau,  \xi' = -\xi\}.
  \end{gathered}
 \eeqq 
The two Lagrangians intersect cleanly at 
\beqq\label{eq-sigma}
\begin{gathered}
\Sigma = \{(t, x, \tau, \xi; t', x', \tau', \xi')\in T^*\mbr^{n+1}\backslash 0\times T^*\mbr^{n+1}\backslash 0: t = t', x = x', \\
\tau = -\tau', \xi = -\xi', \tau^2 = |\xi|^2\}
\end{gathered}
\eeqq
In fact, $\La_1$ is the flow out of $\Sigma$ under the Hamilton vector field $H_f$ of $f(\tau, \xi) = \ha(\tau^2 - |\xi|^2).$  Indeed, we have 
\beq
H_f = \tau \frac{\p}{\p t} + \sum_{i = 1}^3 \xi_i \frac{\p}{\p x^i}
\eeq
Let $\gamma(s) = (t(s), x(s), \tau(s), \xi(s))$ be a null bi-characteristic which satisfy 
\beq
\begin{gathered}
\dot t(s) = \tau, \quad \dot x_i(s) = \xi_i, \quad \dot \tau(s) = 0, \quad \dot \xi_i(s) = 0\quad s\geq 0\\
 t(0) = t', \quad x(0) = x', \quad \tau(0) = \tau', \quad \xi(0) = \xi'
\end{gathered}
\eeq
with $f(\tau', \xi') = 0.$ We solve that 
\beq
t(s) = t' +  s \tau', \quad x(s) = x' + s \xi', \quad \tau(s) = \tau', \quad \xi(s) = \xi'.  
\eeq
which up to a re-parametrization gives \eqref{eq-lag2}.

\begin{prop}\label{prop-normal} 
For the weighted light ray transform $L_\kappa$ in \eqref{eq-lrayw}, the Schwartz kernel of the normal operator $N_\kappa = L_\kappa^\ast L_\kappa$ belongs to $I^{-3/2, 1/2}(\mbr^{3+1}\times \mbr^{3+1}; \La_0, \La_1)$, in which $\La_0, \La_1$ are two cleanly intersection Lagrangians defined in \eqref{eq-lag1}, \eqref{eq-lag2}.  The principal symbol  of $N_\kappa$ on $\La_1\backslash \Sigma$ is non-vanishing if $\kappa$ is a positive smooth function.  
\end{prop}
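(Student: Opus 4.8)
The plan is to follow the argument of \cite{Wan2}, where the paired Lagrangian structure of the normal operator of the (unweighted) light ray transform on a globally hyperbolic Lorentzian manifold without conjugate points is extracted directly from its Schwartz kernel; here the weight is carried along throughout, the starting point being the explicit kernel \eqref{eq-lrayker} of Proposition \ref{prop-weiker}. It is in fact cleanest to work from the operator form of $N_\kappa$ rather than \eqref{eq-lrayker}: substituting the Fourier inversion formula for $f$ into \eqref{eq-int1} and putting $r = s - t$ gives
\beqq\label{eq-nmplan}
\begin{gathered}
N_\kappa f(t,x) = \frac{1}{(2\pi)^{4}}\int e^{i(t\tau + x\cdot\xi)}\, m(t,x,\tau,\xi)\,\hat f(\tau,\xi)\,d\tau\,d\xi,\\
m(t,x,\tau,\xi) = \int_{\mbs^2}\int_{\mbr} \kappa(t,x,\theta)\,\kappa(t+r, x+r\theta,\theta)\, e^{ir(\tau + \xi\cdot\theta)}\,dr\,d\theta .
\end{gathered}
\eeqq
Here the two $\kappa$ factors are evaluated along the integration variable $\theta\in\mbs^2$, so they are smooth and no directional singularity is created at the diagonal.

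The key step is to analyze $m$. The inner $r$--integral is the Fourier transform in $r$ of a compactly supported $C^\infty$ function, hence Schwartz in the variable $\tau + \xi\cdot\theta$; reducing the $\theta$--integral by the coarea formula shows that $m$ is smooth and rapidly decreasing where $\tau^2 > |\xi|^2$ (then $\{\theta\in\mbs^2:\ \xi\cdot\theta = -\tau\}$ is empty), while on $\{\tau^2 < |\xi|^2\}$ it is a symbol of order $-1$ with leading term a nonzero multiple of $|\xi|^{-1}$ times an average of $\kappa(t,x,\cdot)^2$ over the circle $\{\theta\in\mbs^2:\ \xi\cdot\theta = -\tau\}$; the transition across $\{\tau^2 = |\xi|^2\}$, where that circle collapses to a point, is of conormal type. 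In other words $m$ is a product--type symbol adapted to the splitting of frequency space by the light cone, in the sense of \eqref{eq-upair1}, and the characterisation \eqref{eq-upair}--\eqref{eq-upair1}, with $k = \codim_{\La_0}\Sigma = 1$, exhibits the Schwartz kernel of $N_\kappa$ as an element of $I^{p,l}(\mbr^{3+1}\times\mbr^{3+1};\La_0,\La_1)$. The orders are pinned down by the two endpoints: away from $\Sigma$, $N_\kappa$ is a pseudodifferential operator of order $p + l = -1$ on $\La_0$ --- the same order as the unweighted normal operator with full symbol $4\pi^2\phi(\tau,\xi)/|\xi|$, cf.\ \eqref{eq-ksym1} --- while on $\La_1$ the kernel gains the extra half derivative of conormal regularity corresponding to the shift $k/2 = 1/2$ in \eqref{eq-symord}, and together these give $p = -3/2$, $l = 1/2$. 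The hard part is the bookkeeping needed to verify the product--type estimates \eqref{eq-symord} uniformly up to $\Sigma$, where the two frequency scales genuinely interact; this is exactly the computation of \cite{Wan2}, which goes through once the weight $\kappa$ is carried along as in \eqref{eq-nmplan}.

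Finally, one computes the principal symbol on $\La_1\setminus\Sigma$. By \eqref{eq-lrayker}, away from the diagonal the Schwartz kernel of $N_\kappa$ differs from that of the unweighted normal operator only by the smooth factor obtained from the values of $\kappa$ at the two spacetime endpoints in the null direction joining them, so on $\La_1\setminus\Sigma$ the principal symbol of $N_\kappa$ equals the principal symbol of the unweighted $N$ --- nonvanishing, as it is governed by the jump of $4\pi^2\phi(\tau,\xi)/|\xi|$ across $\{\tau^2 = |\xi|^2\}$ (cf.\ \eqref{eq-ksym1}) --- multiplied by that factor. If $\kappa$ is a positive smooth function the factor is strictly positive, so the principal symbol of $N_\kappa$ on $\La_1\setminus\Sigma$ does not vanish, as claimed. (The Sobolev mapping properties of $N_\kappa$ and of $L_\kappa$ used in the next section then follow from the standard continuity theory for paired Lagrangian distributions.)
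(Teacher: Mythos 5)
Your proposal is correct and follows essentially the same route as the paper: both write $N_\kappa$ as an explicit oscillatory integral with the weight carried along in the amplitude, identify the product-type symbol behaviour relative to the light cone $\tau^2=|\xi|^2$ (rapid decay in the timelike region, order $-1$ with a positive circle-average of $\kappa^2$ in the spacelike region, conormal transition across the cone) to place the kernel in $I^{-3/2,1/2}(\La_0,\La_1)$, and read off the nonvanishing principal symbol on $\La_1\setminus\Sigma$ from the explicit kernel away from the diagonal together with the positivity of $\kappa$. The only differences are cosmetic: you use a left-quantized amplitude $m(t,x,\tau,\xi)$ with the $r$-integral over all of $\mbr$, while the paper keeps $t'$ and splits the kernel into the two delta terms, and the paper carries out explicitly the symplectic change of variables $s=\tau-|\xi|$, $\tilde x = x+t\xi/|\xi|$ reducing to the model form \eqref{eq-upair1}, a step you defer to the computation in \cite{Wan2}.
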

\bpf
To see that the kernel  is a paired Lagrangian distribution, we will find its oscillatory integral representation. 
We start from 
\beq
N_\kappa f(t, x) = \int K(t, x, t', x') f(t', x') dt'dx'
\eeq
where $K$ is given by Proposition \ref{prop-weiker}. 
 Using the Fourier transform of the delta distribution, we have
\beq
\begin{gathered}
N_\kappa f(t, x) =C \int_{\mbr^3}\int_\mbr  \int_\mbr ( e^{i(t - t' - |x'-x|)\tau}   + e^{i(t - t' + |x' -x|)\tau} )\\
\cdot \frac{\kappa(t, x, \frac{x'-x}{|x' - x|}) \kappa(t', x', -\frac{x' - x}{|x' - x|})}{|x' - x|^2}  f(t', x')d\tau  dt'dx' \\
= \int_{\mbr^3}\int_\mbr  \int_\mbr e^{i(t - t')\tau} B(t, x, t', x', \tau)f(t', x')d\tau  dt' dx'
\end{gathered}
\eeq
where $C$ is a non-zero constant due to the Fourier transform which we do not keep track of, and 
\beq
\begin{gathered}
B(t, x, t', x', \tau) = (e^{i|x'-x|\tau} + e^{-i|x'-x|\tau}) \frac{\kappa(t, x, \frac{x'-x}{|x' - x|}) \kappa(t', x', -\frac{x' - x}{|x' - x|})}{|x' - x|^2}\\
 = (e^{i|z|\tau} +e^{-i|z|\tau}) \frac{\kappa(t, x, \frac{z}{|z|}) \kappa(t', x+z, -\frac{z}{|z|})}{|z|^2}
 \end{gathered}
\eeq
Here, we used $z = x'-x$.  We take Fourier transform in $z$ variable to get 
\beqq\label{eq-nka}
\begin{gathered}
N_\kappa f(t, x)  
 = C \int_{\mbr^3}\int_\mbr  \int_{\mbr^3} \int_\mbr e^{i(t - t')\tau} e^{i(x'-x)\xi} A(t, x, t', \tau, \xi)f(t', x')d\tau  d\xi dt' dx'   
\end{gathered}
\eeqq
where  
\beq
A(t, x, t', \tau, \xi) = \int_{\mbr^3} (e^{-iz' \xi- i|z'|\tau} + e^{-iz' \xi + i|z'|\tau})
   \frac{\kappa(t, x, \frac{z'}{|z'|}) \kappa(t', x + z', -\frac{z'}{|z'|})}{|z'|^2}  dz'
\eeq
Let $z' = r\theta$ in polar coordinate. We get 
\beqq\label{eq-tA}
\begin{gathered}
A(t, x, t', \tau, \xi) =  \int_{\mbs^2}\int_{0}^\infty (e^{-ir\theta \cdot \xi- ir\tau} + e^{-ir\theta \cdot \xi + ir\tau}) 
   \kappa(t, x, \theta) \kappa(t', x + r\theta, -\theta) dr d\theta
   \end{gathered}
\eeqq
Without loss of generality, we assume that $\kappa(t', x+ r\theta, -\theta)$ is compactly supported in $r$. Using Fourier transform in $r$ variable and integration by parts, we see that 
\beqq\label{eq-nka1}
\begin{gathered}
A(t, x, t', \tau, \xi)      =  \int_{\mbs^2}  F(t, t', x, \theta\cdot \xi + \tau, \theta) + F(t, t', x, \theta\cdot \xi - \tau, -\theta) d\theta
   \end{gathered}
\eeqq
where $F(t, t', x, \sigma, \theta)$ is smooth and compactly supported in $t, t', x$ variables. Furthermore, for $|\sigma|$ large and $k\geq 0,$ we have
\beqq\label{eq-FF}
|\p_\sigma^k F|\leq C_k |\sigma|^{-k-1} 
\eeqq
where $C_k$ depends on $k.$ 
We  compute the integral in \eqref{eq-nka1} in $\theta$.  It suffices to consider $\xi = |\xi|(0, 0, 1)$ then $\theta \cdot \xi = |\xi|\theta_3$. We let $v = |\xi|\theta_3$, and use $w^2 + \theta_3^2 = 1$ to get 
\beqq\label{eq-a1}
\begin{gathered}
A(t, x, t', \tau, \xi)     =  \int_{-1}^1 \int_{-1}^1  (F(t, t', x, \theta_3 |\xi| + \tau, \theta)  
+ F(t, t', x, \theta_3 |\xi| - \tau, -\theta)) (1 - \theta_3^2)^\ha d\theta_3 dw
   \end{gathered}
\eeqq

  Now that we computed the amplitude function in \eqref{eq-nka},  we are ready to show that the kernel of $N_\kappa$ is a paired Lagrangian distribution.  We will transform \eqref{eq-nka} to the model form \eqref{eq-upair1}.  We make a symplectic change of variables   on $T^*\mbr^{n+1}\backslash 0$
\beq
\tilde x = x + t\xi/|\xi|, \quad \tilde t = t, \quad s = \tau - |\xi|, \quad  \xi = \xi.
\eeq 
We can choose an Fourier integral operator with symbol of order $0$ which quantizes the symplectic change of variable to transform $K$ to
\beqq\label{eq-kker}
\begin{gathered}
K(\tilde t, \tilde x, t', x')   =  \int_{\mbr^{n+1}} e^{i\tilde t s + i \tilde x\cdot \xi} A(t, x, t', s + |\xi|, \xi) ds d\xi 
 \end{gathered}
\eeqq 
modulo a smooth term. See also \cite{Wan1}. We claim that $A(t, x, t', s + |\xi|, \xi)$ satisfies the product type estimate with $p = -3/2, l = 1/2$. From \eqref{eq-a1}, we have
\beqq\label{eq-a2}
\begin{gathered}
A(t, x, t', s +|\xi|, \xi) 
 = \int_{\mbs^1} \int_{-1}^{1} F(t, t', x, (\alpha + 1) |\xi| + s , \alpha, w)  (1 - \alpha^2)^\ha  d\alpha dw\\
+  \int_{\mbs^1} \int_{-1}^{1} F(t, t', x, (\alpha -1)|\xi| - s, -\alpha, -w)) (1 - \alpha^2)^\ha d\alpha dw 
   \end{gathered}
\eeqq
First, for $|\xi| \leq C|s|, |s|\geq 1$,  we  use the estimate \eqref{eq-FF} to get  
\beq
|A(t, x, t', s+ |\xi|, \xi)| \leq   C|s|^{-1}
\eeq
One can verify the same estimate for $Qk$ where $Q$ is the finite product of differential operators of the form 
$sD_{s}, s D_{\xi_m}$. For $|s|\leq C|\xi|, |\xi|\geq 1$, we have 
\beq
|A(t, x, t', s + |\xi|, \xi)| \leq   C  |\xi|^{-1} 
\eeq
 and one can verify the estimate for $Qk$ where $Q$ is the finite product of differential operators of the form $D_s, s D_{s}, \xi_j D_{\xi_m}$. So $K$ is a paired Lagrangian distribution according to \eqref{eq-upair1}. 
 
 Finally, we compute  the principal symbol on $\La_1\backslash \La_0$. For this purpose, we can actually use the kernel representation in Proposition \ref{prop-weiker} and find that 
\beqq\label{eq-lrayker1}
\begin{gathered}
N_\kappa (t, x, t', x') = \int_{\mbr} \frac{\kappa(t, x, \frac{x'-x}{|x' - x|}) \kappa(t', x', \frac{x - x'}{|x - x'|})}{|t- t'|^2} 
\cdot (e^{i(t - t' - |x - x'|)\tau} + e^{i(t - t' + |x - x'|)\tau}) d\tau 
\end{gathered}
\eeqq 
which is valid for $t'\neq t.$ 
This gives an oscillatory representation of the Fourier integral operator on $\La_1$ with a real phase function, see \cite[Chapter VI]{Tre}. We see that the  symbol   is non-vanishing and positive because $\kappa$ is positive.  
\epf

We derive the Sobolev estimates for $L_\kappa$ and $L_\kappa^*$. 
\begin{prop}\label{prop-sobo}
The weighted light ray transform $L_\kappa$ in \eqref{eq-lrayw} is bounded from $H^s_{\comp}(\mbr^{3+1})$ to  $H_{\loc}^{s+ 1/2}(\mbr^3\times \mbs^{2})$. Its adjoint $L_\kappa^*$ is bounded from $H_{\comp}^{s}(\mbr^3\times \mbs^{2})$ to $H^{s+1/2}_{\loc}(\mbr^{3+1})$. 
\end{prop}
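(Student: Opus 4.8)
The plan is to deduce both statements from the structure of the normal operator $N_\kappa = L_\kappa^*L_\kappa$ obtained in Proposition \ref{prop-normal}, following the argument for the unweighted transform in \cite{Wan1}. The point to verify is that a smooth $\kappa$ with compact support in $(t,x)$ enters only the \emph{symbols} and leaves the Lagrangians $\La_0,\La_1$, their clean intersection $\Sigma$ (of codimension $k=1$ in $\La_0$ by \eqref{eq-sigma}) and the orders $(-3/2,1/2)$ unchanged, so the scheme applies verbatim. Because $\kappa$ is compactly supported in $(t,x)$, the Schwartz kernels of $L_\kappa$, $L_\kappa^*$ and $N_\kappa$ are all compactly supported, so I work with fixed compact sets and suppress the $\comp/\loc$ subscripts below.

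\emph{Step 1: $N_\kappa\colon H^s(\mbr^{3+1})\to H^{s+1}(\mbr^{3+1})$ for all $s$.} In the description \eqref{eq-upair1} of the paired Lagrangian kernel, the amplitude $b$ of $N_\kappa$ read off from \eqref{eq-a2} satisfies, in the region $|\eta'|\le C|\eta''|$, estimates of order $p+k/2=-1$ in $\eta''$ and $l-k/2=0$ in $\eta'$, and in the region $|\eta''|\le C|\eta'|$ estimates of order $p+l=-1$; in either region $|Qb|\lesssim\langle\eta\rangle^{-1}$ for the admissible products $Q$ of conic derivatives. The Sobolev continuity of paired Lagrangian operators (cf.\ \cite{Wan1,DUV}) then gives the gain of one derivative; the flowout part associated to $\La_1$ is of lower order $-3/2$ and does not worsen the estimate.

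\emph{Step 2: from $N_\kappa$ to $L_\kappa$.} By $L^2$ duality the two claims of the proposition are equivalent, so it suffices to bound $L_\kappa$. For $s=0$ this is a $TT^*$ estimate: $\|L_\kappa f\|_{L^2(\mcc)}^2 = \langle N_\kappa f,f\rangle \le \|N_\kappa f\|_{H^{1/2}}\|f\|_{H^{-1/2}} \le C\|f\|_{H^{-1/2}}^2$, i.e.\ $L_\kappa\colon H^{-1/2}\to L^2$ and hence $L_\kappa^*\colon L^2\to H^{1/2}$. For general $s$, note that the canonical relation of $L_\kappa$ is the same conormal, Bolker-type relation as that of the unweighted $L$ (the weight modifies only the symbol). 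Hence, for $\Lambda=(1-\Delta_{\mcc})^{1/2}$ an elliptic first-order $\Psi$DO on $\mcc$, the composition calculus of \cite{Wan1,DUV} shows $L_\kappa^*\Lambda^{2s+1}L_\kappa$ is again a paired Lagrangian operator with gain $1-(2s+1)=-2s$, hence bounded $H^r\to H^{r-2s}$. Writing $B=\Lambda^{s+1/2}L_\kappa\langle D_{t,x}\rangle^{-s}$, one has $B^*B=\langle D_{t,x}\rangle^{-s}L_\kappa^*\Lambda^{2s+1}L_\kappa\langle D_{t,x}\rangle^{-s}\colon L^2\to L^2$, so $B\colon L^2\to L^2$, i.e.\ $L_\kappa\colon H^s_{\comp}(\mbr^{3+1})\to H^{s+1/2}_{\loc}(\mbr^3\times\mbs^2)$; the estimate for $L_\kappa^*$ follows by duality. (Equivalently, the microlocal argument of \cite{Wan1} for $L$ applies line by line.)

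The only delicate point -- already handled by Proposition \ref{prop-normal} -- is the \emph{sharpness} of the exponent, i.e.\ that $L_\kappa$ gains exactly $1/2$ derivative. This is precisely where the non-ellipticity of $N_\kappa$ at the light cone $\tau^2=|\xi|^2$ enters: an operator associated to the flowout $\La_1$ alone of order $-3/2$ would be smoother than order $-1$, but the clean intersection with the diagonal $\La_0$ pins the effective gain at $1$ for $N_\kappa$, hence $1/2$ for $L_\kappa$. Once the paired Lagrangian calculus is in place, the weighted case needs only the routine check that $\kappa$ leaves $\La_0,\La_1$, the codimension $k=1$ and the orders unchanged.
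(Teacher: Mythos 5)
Your proposal is correct and takes essentially the same route as the paper: Proposition \ref{prop-normal} places $N_\kappa$ in $I^{-3/2,1/2}(\La_0,\La_1)$ with $\La_1$ the flowout of $\Sigma$, and the paper's proof simply invokes Proposition 5.6 of \cite{DUV} for the mapping property of $N_\kappa$ and then deduces the bounds for $L_\kappa$ and $L_\kappa^*$. Your Steps 1--2 merely make explicit, via the $TT^*$ argument and conjugation by elliptic powers, the deduction from the $N_\kappa$ estimate that the paper leaves implicit (the same standard argument underlying Lemma \ref{prop-lrayest0} for the unweighted transform).
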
 
\bpf
 
Because $\La_1$ is the flow out of $\La_0\cap \La_1$ see above Proposition \ref{prop-normal}, we can apply Proposition 5.6 of \cite{DUV} to get the estimates for $N_\kappa$ which further gives estimates for $L_\kappa, L_\kappa^*.$ 
\epf

\section{Analysis of the composition}\label{sec-comp}
We follow the approach in \cite{Wan2} to analyze the composition of the weighted light ray transform and the parametrix of the Cauchy problem. 
We briefly recall the parametrix of the Cauchy problem \eqref{eq-cauchy}. The operator $P(z, \p)$ in \eqref{eq-hyper} is strictly hyperbolic of multiplicity one with respect to the Cauchy hypersurfaces $\mcm_t$, see Definition 5.1.1 of \cite{Dui}. This means that all bi-characteristic curves of $P$ are transversal to $\mcm_t$ and for $(\bar z, \bar \zeta) \in T^*\mcm_t \backslash 0$
\beq
\mcp(\bar z, \zeta) = 0, \quad \zeta|_{T_{\bar z}\mcm_t} = \bar \zeta
\eeq
has exactly one solution.   It is convenient to use $D_j = -\imath \p_j, j = 0, 1, 2, \cdots, n$ in which $\imath^2 = -1.$ Also, it is convenient to work on a larger set. We let $\mcn = (-\tilde T, \tilde T)\times \mbr^3$ for $\tilde T > T$ and consider the Cauchy problem on $\mcn$
\beqq\label{eq-cauchy1}
\begin{gathered}
P(z, D)u(z) = 0, \text{ on } \mcn \\
u = f_1, D_t u = f_2 \text{ on } \mcn_0.
\end{gathered}
\eeqq
We use Duistermaat-H\"ormander's parametrix construction, see e.g.\ \cite{Dui}. 
The restriction operator $\rho_0: C^\infty(\mcn) \rightarrow C^\infty(\mcn_0)$ is an FIO  in $I^{1/4}(
\mcn, \mcn_0; C_0)$ with canonical relation  
\beqq
C_0 = \{(z, \zeta, \bar z, \bar \zeta) \in T^* \mcn \backslash 0 \times T^*\mcn_0 \backslash 0 : \bar z = z, \bar \zeta = \zeta|_{T_{\bar z}\mcn_0}\}
\eeqq
We consider the canonical relation $C_{wv}$ defined by
\beqq\label{eq-canowv}
\begin{gathered}
C_{wv} = \{ (w, \iota, \bar z, \bar \zeta) \in T^*\mcn \backslash 0 \times T^*\mcn_0 \backslash 0: \text{$(w, \iota)$ is on the bicharacteristic }\\
\text{strip through some $(\bar z, \zeta)$ such that } \bar \zeta = \zeta|_{T_{\bar z}\mcn_0} \text{ and } \mcp(\bar z, \zeta) = 0\}
\end{gathered}
\eeqq
The  next result is straight forward from Theorem 5.1.2 of \cite{Dui}.
\begin{prop}\label{prop-wvpara}
There exists $E_1 \in I^{-1/4}(\mcn, \mcn_0; C_{wv}), E_2 \in  I^{-5/4}(\mcn, \mcn_0; C_{wv})$ such that 
\beqq\label{eq-parawv}
\begin{gathered}
P(z, D)E_k \in C^\infty(\mcm), \quad k = 1, 2\\
\rho_0 E_1 - \id \in C^\infty(\mcn_0), \quad \rho_0 E_2 \in C^\infty(\mcn_0)\\
\rho_0 D_t E_1 \in C^\infty(\mcn_0), \quad  \rho_0 D_t E_2  - \id \in C^\infty(\mcn_0)
\end{gathered}
\eeqq
\end{prop}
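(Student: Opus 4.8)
The plan is to invoke the Duistermaat--H\"ormander parametrix construction for the Cauchy problem of a strictly hyperbolic operator of multiplicity one, exactly as in Theorem~5.1.2 of \cite{Dui}, and to verify that its output specializes to the four mapping statements in \eqref{eq-parawv}. First I would recall the geometry: since $P(z,\p)=\square + \sum_j A_j(z)\p_j + B(z)$ is strictly hyperbolic of multiplicity one with respect to the level sets $\mcn_t$, the characteristic variety of the principal symbol $\mcp(z,\zeta)=-\zeta_0^2+|\zeta'|^2$ splits globally into two disjoint smooth sheets $\{\zeta_0 = \pm|\zeta'|\}$, each of which projects diffeomorphically onto $T^*\mcn_0\setminus 0$. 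Consequently, for each $(\bar z,\bar\zeta)\in T^*\mcn_0\setminus 0$ there are exactly two solutions $\zeta$ of $\mcp(\bar z,\zeta)=0$, $\zeta|_{T_{\bar z}\mcn_0}=\bar\zeta$, and the canonical relation $C_{wv}$ in \eqref{eq-canowv} is the union of two components $C_{wv}^{\pm}$, each a canonical graph obtained by flowing the graph of $\rho_0$ (i.e.\ $C_0$) along the Hamilton flow of the corresponding root. This is the setting in which \cite[Thm.~5.1.2]{Dui} applies verbatim.

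Next I would construct $E_1$ and $E_2$ as superpositions of first-order FIOs. For a strictly hyperbolic operator one writes the solution of $P u =0$, $u|_{\mcn_0}=f_1$, $D_t u|_{\mcn_0}=f_2$ in the form $u = E_1 f_1 + E_2 f_2$, where $E_k = \sum_{\pm} E_k^{\pm}$ and each $E_k^{\pm}$ is an FIO associated with $C_{wv}^{\pm}$ whose phase function parametrizes $C_{wv}^{\pm}$ and whose amplitude is determined recursively by transport equations along the bicharacteristics, with Cauchy data for the amplitudes fixed at $t=0$. The order bookkeeping is the standard one: the solution operator $f\mapsto u$ of the \emph{homogeneous} Cauchy problem mapping $H^s(\mcn_0)\to H^s(\mcn)$ corresponds to an FIO of order $0$ when the data is $u|_{\mcn_0}$; accounting for the half-density normalization on $\mcn$ versus $\mcn_0$ (a shift of $1/4$, as already recorded for $\rho_0\in I^{1/4}$) gives $E_1\in I^{-1/4}(\mcn,\mcn_0;C_{wv})$; and since $E_2$ acts on the \emph{second} piece of Cauchy data it gains one extra order of smoothing, landing in $I^{-5/4}(\mcn,\mcn_0;C_{wv})$. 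The four relations in \eqref{eq-parawv} are then the defining properties of this construction: $P(z,D)E_k\in C^\infty$ because the amplitudes solve the transport equations exactly (the error is smoothing); and the boundary identities $\rho_0 E_1-\id$, $\rho_0 E_2$, $\rho_0 D_t E_1$, $\rho_0 D_t E_2-\id$ all lying in $C^\infty(\mcn_0)$ are exactly the prescribed Cauchy conditions for the amplitudes, which hold modulo smoothing because the initialization of the recursion at $t=0$ matches the identity (resp.\ zero) symbol up to lower order and one sums an asymptotic series via Borel.

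For the write-up I would keep this short: state that $P(z,D)$ is strictly hyperbolic of multiplicity one with respect to $\{\mcn_t\}$ (already observed above Proposition~\ref{prop-wvpara}), so that \cite[Thm.~5.1.2]{Dui} provides FIOs $E_1, E_2$ associated with the canonical relation $C_{wv}$ of \eqref{eq-canowv} satisfying \eqref{eq-parawv}, and then only verify the two order claims $E_1\in I^{-1/4}$ and $E_2\in I^{-5/4}$ by the half-density/order count sketched above, cross-referencing the order $I^{1/4}$ already asserted for $\rho_0$ so the indices are internally consistent (one checks $\rho_0 E_1\in I^{1/4}\circ I^{-1/4}\subset I^{0}$, i.e.\ $\rho_0 E_1-\id$ is of order $-\infty$ once the symbol is normalized, and similarly $\rho_0 D_t E_2$, where the extra $D_t$ contributes $+1$ and compensates the $-1$ gain in $E_2$).

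The main obstacle is essentially bookkeeping rather than substance: one must pin down the precise FIO orders so that the composition identities in \eqref{eq-parawv} come out with the stated (integer or smoothing) orders, which requires being careful about the half-density bundles on $\mcn$ versus $\mcn_0$ and about the fact that $\rho_0$ itself already carries order $1/4$. A secondary point worth a sentence is that $C_{wv}$ is not a single canonical graph but a disjoint union of two, one for each root $\zeta_0=\pm|\zeta'|$; since the two components are disjoint the symbol calculus and order counting proceed componentwise, so this causes no real difficulty. There is no hard analysis here beyond citing \cite{Dui}; the content is in correctly identifying the orders and the canonical relation, which is why I would present the proof as little more than a careful application of Theorem~5.1.2 of \cite{Dui}.
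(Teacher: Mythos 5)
Your proposal is correct and follows exactly the same route as the paper, which simply derives the proposition directly from Theorem 5.1.2 of \cite{Dui} for strictly hyperbolic operators of multiplicity one; your additional order bookkeeping and the splitting of $C_{wv}$ into the two graph components are consistent with how the paper subsequently decomposes $E_k = E_k^+ + E_k^-$.
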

Now we represent the solution of \eqref{eq-cauchy1} as $
u = E_1 f_1 + E_2 f_2
$ modulo a smooth term.  
It is natural to decompose $C_{wv}$ as the disjoint union of $C_{wv}^+$ and  $C_{wv}^-$ which are 
\beqq\label{eq-canowvpm}
\begin{gathered}
C^\pm_{wv} =  \{(w, \iota, \bar z, \bar \zeta) \in T^*\mcn \backslash 0 \times T^*\mcn_0 \backslash 0: \text{$\iota$ is future/past }\\
\text{pointing light-like and lies on the bicharacteristic strip  through}\\
\text{ some $(\bar z, \zeta)$ such that }\bar \zeta = \zeta|_{T_{\bar z}\mcn_0} \text{ and } \mcp(\bar z, \zeta) = 0\}
\end{gathered}
\eeqq
We can decompose (for $k = 1, 2$)
\beq
E_k = E_k^+ + E_k^-, \quad E_k^\pm \in I^{1- k - 1/4}(\mcn, \mcn_0; C_{wv}^\pm).
\eeq
We need the relation of the principal symbols of $E_1^\pm, E_2^\pm$. We remark that the Maslov bundle and the half density bundle can be trivialized because the Lagrangians involved allow global parametrization. We will not show these factors in the notations. 
\begin{lemma}[Lemma of \cite{Wan2}]\label{lm-wvsym}
Let $e_k^\pm, k = 1, 2$ be the principal symbol of $E_k^\pm$ on $\La^\pm = (C^{\pm}_{wv})'$ respectively. Suppose that the sub-principal symbol of $P(z, \p)$ is purely imaginary, in which case $P(z, \p)$  is of the form
\beqq\label{eq-hyper1}
P(z, \p) = \square + \sum_{j = 0}^n A_j(z)\p_j + B(z)
\eeqq
where $A_j(z)$ are real valued smooth functions. Then $e_k^\pm, k = 1, 2$ are  real valued and 
\beq
e_1^+ > 0, \quad e_2^+ >0, \quad e_1^- >0, \quad e_2^- <0.
\eeq
\end{lemma}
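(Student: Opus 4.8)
The plan is to combine the transport equation satisfied by the principal symbols $e_k^\pm$ along the null bicharacteristics of $\square$ with the values of those symbols on the initial surface $\mcn_0$ prescribed by Proposition \ref{prop-wvpara}; this is essentially the computation behind the cited Lemma of \cite{Wan2}. Since $\La^\pm = (C_{wv}^\pm)'$ is globally parametrized by $\mcn_0$ through the Hamilton flow of $\square$, which has no conjugate points, the Maslov factor and the half-density bundle over $\La^\pm$ can be trivialized by a fixed nowhere-vanishing reference half-density, so each $e_k^\pm$ is a genuine scalar function on $\La^\pm$; this is the trivialization already mentioned before the statement.

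First I would record the transport equation. Because $E_k^\pm$ is a parametrix we have $P(z, D)E_k^\pm \in C^\infty$, so by the symbol calculus for Fourier integral operators (see \cite[Ch.~5]{Dui}, \cite[Sec.~25.2]{Ho4}) the principal symbol obeys, along each bicharacteristic strip of $\square$,
\[
\mathcal{L}_{H_{p_2}}\bigl(e_k^\pm\,|d\mu|^{1/2}\bigr) + \imath\, p^{\mathrm{sub}}\, e_k^\pm\,|d\mu|^{1/2} = 0,
\]
where $p_2(\tau,\xi) = \tau^2 - |\xi|^2$ is the principal symbol of $\square$, $H_{p_2}$ its Hamilton vector field, $|d\mu|^{1/2}$ the reference half-density, and $p^{\mathrm{sub}}$ the sub-principal symbol of $P$; in this normalization it is a scalar linear ODE along each bicharacteristic whose zeroth-order coefficient is $\imath\, p^{\mathrm{sub}}$ plus the real logarithmic derivative of $|d\mu|^{1/2}$ along the flow (the precise sign in front of $\imath$ plays no role below). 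When $P$ has the form \eqref{eq-hyper1} with real $A_j$, a direct computation gives $p^{\mathrm{sub}}(z,\zeta) = \imath\sum_{j=0}^n A_j(z)\zeta_j$, which is purely imaginary, so $\imath\, p^{\mathrm{sub}}$ is real; hence the transport ODE has real coefficients, and therefore $e_k^\pm$ remains real along the flow once it is real on $\mcn_0$, and its sign is preserved, since its value equals its initial value times a positive exponential factor.

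It thus remains only to read off the data on $\mcn_0$. Composing $E_k^\pm$ with $\rho_0$ yields a pseudodifferential operator on $\mcn_0$ with principal symbol $e_k^\pm|_0$, the restriction of $e_k^\pm$ to the part of $\La^\pm$ over $\mcn_0$, while $\rho_0 D_t E_k^\pm$ has principal symbol $\tau_\pm\, e_k^\pm|_0$, where $\tau_+ = |\xi|$ and $\tau_- = -|\xi|$ are the two roots of $p_2(\tau,\xi) = 0$, labelled so that $\tau_\pm$ corresponds to the future/past light cone, i.e.\ to $C_{wv}^\pm$. Equating principal symbols in the four relations of Proposition \ref{prop-wvpara} (up to a fixed positive normalization) gives
\[
e_1^+|_0 + e_1^-|_0 = 1, \qquad \tau_+ e_1^+|_0 + \tau_- e_1^-|_0 = 0,
\]
\[
e_2^+|_0 + e_2^-|_0 = 0, \qquad \tau_+ e_2^+|_0 + \tau_- e_2^-|_0 = 1.
\]
Solving these two $2\times 2$ systems gives $e_1^+|_0 = e_1^-|_0 = \ha$, $e_2^+|_0 = \tfrac{1}{2|\xi|}$ and $e_2^-|_0 = -\tfrac{1}{2|\xi|}$, all real, with $e_1^\pm|_0 > 0$, $e_2^+|_0 > 0$ and $e_2^-|_0 < 0$. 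Propagating along the bicharacteristics as above then yields $e_1^+ > 0$, $e_2^+ > 0$, $e_1^- > 0$, $e_2^- < 0$ on all of $\La^\pm$, which is the claim.

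The main obstacle is the bookkeeping of the transport equation: one must confirm that its zeroth-order coefficient is exactly $\imath$ times the sub-principal symbol of $P$ — so that the purely-imaginary hypothesis forces it to be real — and that the half-density and Maslov contributions along the conjugate-point-free flow of $\square$ neither vanish nor change sign, so that the sign of $e_k^\pm$ is well defined and propagates. This is the energy-type estimate behind the Duistermaat--H\"ormander parametrix; the perturbation enters the transport equation only through the sub-principal symbol $\imath\sum_j A_j\zeta_j$, and the reality of the $A_j$ is precisely what keeps the symbol transport in the real category, while the zeroth-order term $B$ is invisible at this level.
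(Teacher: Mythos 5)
The paper does not prove this lemma here --- it is quoted from \cite{Wan2} --- and your reconstruction is the standard Duistermaat--H\"ormander computation that the cited source uses: the transport equation whose zeroth-order coefficient is $\imath\,p^{\mathrm{sub}}=-\sum_j A_j\zeta_j$ (real, since the $A_j$ are real) propagates reality and sign along the conjugate-point-free flow, and the two $2\times 2$ systems obtained by matching principal symbols in the four relations of Proposition \ref{prop-wvpara} give $e_1^\pm|_0=\tfrac12$, $e_2^\pm|_0=\pm\tfrac{1}{2|\xi|}$. Your worry about stray factors of $\imath$ is benign: because the Cauchy data are $(u, D_t u)$ rather than $(u,\partial_t u)$, the explicit half-wave decomposition $E_2=\tfrac{1}{2|D|}\bigl(e^{\imath t|D|}-e^{-\imath t|D|}\bigr)$ already has real symbols with exactly the claimed signs, so the argument is correct as written.
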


 Now we can outline the steps for inverting the composition $L_\kappa \chi_0 E$ where $E = E_k^\pm, k = 1, 2$ in Proposition \ref{prop-wvpara}.   The idea is to consider the normal operator $(L_\kappa E)^*(L_\kappa E)$ and fine-tune it so the operator is well-behaved.  Let $\chi$ be a smooth function supported in $(T, T')\times \mbr^3.$  
 We analyze $E^*\chi N _\kappa  \chi_0 E$ and show it is an elliptic pseudo-differential operator on $\mcn_0.$ There are three main ingredients.     
\begin{enumerate}[(i)]
\item As $\chi \cdot  \chi_0 = 0$, we know that $\chi N_\kappa \chi_0 \in I^{-3/2}(\mbr^{4}, \mbr^{4}; \La_1)$. The role of $\chi$ is to keep the kernel of $N_\kappa$ away from the diagonal $\La_0$ where the principal symbol is singular. 
\item Let $\La_\pm = (C_{wv}^\pm)'$. It was shown in \cite{Wan2} that $\La_1$ intersect $\La_\pm$ cleanly with excess one so the composition $\chi N_\kappa  \chi_0 E \in I^{\ast}(\mcn, \mcn_0; C_{wv})$ as  a result of Duistermaat-Guillemin's clean FIO calculus with the order $\ast$ to be determined.  
\item We can compose the operator in (ii) with $E^*$ by using clean FIO calculus again to conclude that $E^*\chi N_\kappa \chi_0 E \in \Psi^{\ast}(\mcn_0)$. The operator can be shown to be elliptic and a parametrix can be constructed.   
\end{enumerate}
 
%
%

We consider the compositions in (ii) and (iii). In fact, we will include some pseudo-differential operators such as those showed up in Section \ref{sec-cmb}. 
\begin{lemma}\label{lm-comp3}
Let $A(D)$ be a pseudo-differential operator on $\mcm$ of order $m.$ We assume that the principal symbol of $A(D)$ is non-vanishing on $\La_\pm$.  
Then the composition $\chi N_\kappa  \chi_0 A(D) E_k^\pm \in I^{-1/4 - k + m}(\mcn, \mcn_0; C^\pm_{wv})$ and the principal symbol is non-vanishing. 
\end{lemma}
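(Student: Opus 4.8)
The plan is to realize $\chi N_\kappa\,\chi_0 A(D) E_k^\pm$ as an iterated composition, built up from the right, and to track the order through the clean intersection calculus for Fourier integral operators while tracking the principal symbol through the positivity statements already at hand (Proposition \ref{prop-normal} and Lemma \ref{lm-wvsym}).

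First I would dispose of $A(D) E_k^\pm$. Since $A(D)$ is a pseudo-differential operator of order $m$, it is an FIO whose canonical relation is the diagonal of $T^*\mcm$, so its composition with $E_k^\pm\in I^{1-k-1/4}(\mcn,\mcn_0;C_{wv}^\pm)$ is transversal with excess $0$. Hence $A(D) E_k^\pm\in I^{m+1-k-1/4}(\mcn,\mcn_0;C_{wv}^\pm)$, and its principal symbol on $\La_\pm=(C_{wv}^\pm)'$ is the product of the principal symbol of $A(D)$ restricted to $\La_\pm$ --- non-vanishing by hypothesis --- with the principal symbol $e_k^\pm$ of $E_k^\pm$, which is real and of a fixed nonzero sign by Lemma \ref{lm-wvsym}. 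Multiplication on the left by $\chi_0\in C_0^\infty$ is harmless: it only localizes the kernel and affects neither the class nor the microlocal principal symbol over its support.

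Next I would compose with $\chi N_\kappa\,\chi_0$. By Proposition \ref{prop-normal} the kernel of $N_\kappa$ lies in $I^{-3/2,1/2}(\mbr^{3+1}\times\mbr^{3+1};\La_0,\La_1)$ with principal symbol on $\La_1\setminus\Sigma$ positive --- here $\kappa$ is a positive integrating factor, so Proposition \ref{prop-normal} applies. Since $\chi\cdot\chi_0\equiv 0$, the $\La_0$-component is killed, so $\chi N_\kappa\,\chi_0\in I^{-3/2}(\mbr^{3+1}\times\mbr^{3+1};\La_1)$ with microsupport disjoint from $\Sigma=\La_0\cap\La_1$. By \cite{Wan2}, $\La_1$ and $\La_\pm$ intersect cleanly with excess $1$ and $(\La_1)'\circ(C_{wv}^\pm)'=C_{wv}^\pm$; geometrically $\La_1$ is the flow-out of $\Sigma$ along the Minkowski null flow, which coincides with the null bicharacteristic flow of $P=\square+\cdots$, so translating along a light ray stays on the same bicharacteristic strip. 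The clean composition calculus (cf.\ \cite[Ch.~25]{Ho4}) then gives $\chi N_\kappa\,\chi_0 A(D) E_k^\pm\in I^{-3/2+(m+1-k-1/4)+1/2}(\mcn,\mcn_0;C_{wv}^\pm)=I^{m-k-1/4}(\mcn,\mcn_0;C_{wv}^\pm)$, the asserted order.

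For the non-vanishing of the principal symbol I would invoke the clean composition symbol formula: at a point of $\La_\pm$ the principal symbol of the composition is the integral over the one-dimensional excess fibre of the product of the principal symbol of $\chi N_\kappa\,\chi_0$ on $\La_1$ with that of $A(D)E_k^\pm$ on $\La_\pm$. The excess fibre is the one-parameter family of translations along the common null geodesic, along which the light-like covector stays fixed; since $A(D)$ is a multiplier, its principal symbol depends only on that covector and factors out of the fibre integral as a nonzero constant. The remaining integral is exactly the one computed in \cite{Wan2} for the case without the extra multiplier, and is nonzero because its integrand is a product of the positive principal symbol of $N_\kappa$ on $\La_1\setminus\Sigma$ (Proposition \ref{prop-normal}) with $e_k^\pm$, which has a fixed sign (Lemma \ref{lm-wvsym}) --- so there is no cancellation. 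The main obstacle will be precisely this: ruling out cancellation in the fibre integral of the clean composition formula, which is where the sign information is indispensable; a secondary nuisance is keeping the FIO order conventions straight so that the excess-$1$ composition produces exactly the $+1/2$ shift.
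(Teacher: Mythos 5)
Your proposal is correct and takes essentially the same route as the paper: note $A(D)E_k^\pm\in I^{m+1-k-1/4}(\mcn,\mcn_0;C^\pm_{wv})$, use $\chi\cdot\chi_0=0$ to place $\chi N_\kappa\chi_0$ in $I^{-3/2}(\La_1)$, invoke the clean (excess one) intersection of $\La_1$ with $\La_\pm$ from \cite{Wan2} and H\"ormander's clean FIO calculus to get the order $-1/4-k+m$, and deduce non-vanishing of the principal symbol from the fixed signs of $\sigma(N_\kappa)$ on $\La_1\setminus\Sigma$ and of $e_k^\pm$ in the fibre integral. Your order bookkeeping and your remark that the symbol of $A(D)$ is constant along the excess fibre (so it factors out and no cancellation can occur) make explicit points the paper treats more tersely, but the argument is the same.
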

\bpf 
Note that $A(D) E_k^\pm \in I^{1 - k + m}(\mcn, \mcn_0; C^\pm_{wv})$ with non-vanishing principal symbol. 
Because $\chi(t) \chi_0 (t) = 0$, we know  that $\chi N_\kappa \chi_0 \in I^{-3/2}(\mcn, \mcn; \La_1)$. It is proved in Lemma 6.1 of \cite{Wan2} that $\La_1$ intersects $\La_\pm$ cleanly. 
One can apply the clean FIO calculus \cite[Theorem 25.2.3]{Ho4} directly to see that $\chi N_\kappa  \chi_0 A(D) E_k^\pm \in I^{-3/2 + 1/4 + 1 - k + m}(\mcn, \mcn_0; C^\pm_{wv})$.  For $p = (t, x, \tau, \xi, y, \eta)\in \La^\pm$, let $C_p$ be the fiber over $p$ in $T^*\mcm\times T^*\mcm \times T^*\mcn_0$ which is connected and compact. Then the principal symbol of the composition at $p$ is given by 
\beqq\label{eq-fiber}
\int_{C_p} \sigma(\chi N_\kappa  \chi_0)(t, x, \tau, \xi, t', x', \tau', \xi')\sigma(A(D)E_k^\pm)( t', x', \tau', \xi', y, \eta)
\eeqq
where $\sigma(\chi N_\kappa  \chi_0), \sigma(A(D)E_k^\pm)$ denote the principal symbols of $\chi N_\kappa \chi_0, E_k^\pm$ respectively and the integration is over the fiber $C_p$, see \cite[Theorem 25.2.3]{Ho4}. With proper choice of the phase function (modulo the Maslov factor), both symbols are  real valued and non-vanishing on the fiber, see Lemma \ref{lm-wvsym} and the proof of Proposition \ref{prop-normal}. We see that the principal symbol of the composition is  non-vanishing.
\epf

\begin{lemma}\label{lm-comp4}
Let $A(D)$ be as in Lemma \ref{lm-comp3}. For $j, k = 1, 2$, we have
\begin{enumerate}
\item $E^{\pm, \ast}_j \chi N_\kappa  \chi_0 A(D) E_k^\pm  \in \Psi^{1 - j - k + m}(\mcn_0)$ are elliptic. 
\item $E^{+, \ast}_j \chi N_\kappa  \chi_0 A(D) E^-_k, E^{-, \ast}_j \chi N_\kappa  \chi_0 A(D)E^+_k$ are smoothing operators on $\mcn_0.$
\end{enumerate}
\end{lemma}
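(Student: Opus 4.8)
The plan is to deduce both claims from the clean-composition result established in Lemma~\ref{lm-comp3} together with the symbol relations of Lemma~\ref{lm-wvsym}. For part (1), start from the factorization
\[
E^{\pm,\ast}_j \chi N_\kappa \chi_0 A(D) E_k^\pm = E^{\pm,\ast}_j \bigl( \chi N_\kappa \chi_0 A(D) E_k^\pm \bigr),
\]
where by Lemma~\ref{lm-comp3} the operator in parentheses lies in $I^{-1/4 - k + m}(\mcn, \mcn_0; C^\pm_{wv})$ with non-vanishing principal symbol. Since $E^{\pm}_j \in I^{1 - j - 1/4}(\mcn, \mcn_0; C^\pm_{wv})$, its adjoint $E^{\pm,\ast}_j \in I^{3/4 - j}(\mcn_0, \mcn; (C^\pm_{wv})^{-1})$. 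The canonical relation $(C^\pm_{wv})^{-1}$ composed with $C^\pm_{wv}$ is the diagonal in $T^*\mcn_0 \times T^*\mcn_0$ minus the zero section; because $\rho_0$ (hence $C_0$, hence $C^\pm_{wv}$) is associated to a graph-type canonical relation arising from the bicharacteristic flow transversal to $\mcm_0$, the composition $C^\pm_{wv} \circ (C^\pm_{wv})^{-1}$ is transversal (it is in fact a canonical graph composition), so the standard transversal FIO calculus \cite[Theorem~25.2.3]{Ho4} applies directly rather than the clean version. One obtains a pseudodifferential operator on $\mcn_0$ of order $(3/4 - j) + (-1/4 - k + m) = 1 - j - k + m$. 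Ellipticity follows because the composed principal symbol is (pointwise on $T^*\mcn_0 \setminus 0$) the product of the non-vanishing symbol of $\chi N_\kappa \chi_0 A(D) E_k^\pm$ with the non-vanishing symbol $\overline{e^\pm_j}$ of $E^{\pm,\ast}_j$; by Lemma~\ref{lm-wvsym} both factors are real and non-zero, so their product is non-zero.

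For part (2), the mechanism is a wavefront-set disjointness argument. The operator $E^{+}_k$ has canonical relation $C^+_{wv}$, which maps into the future light cone over $\mcn$; likewise $E^{-}_j$ maps into the past light cone. The middle factor $\chi N_\kappa \chi_0 A(D)$ has canonical relation contained in $\La_1$, which is the flow-out of $\Sigma$ under the null Hamilton flow and hence preserves the time-orientation of the light-like covector (the flow-out equations $\dot\tau = 0$, $\tau = \pm|\xi|$ show the sign of $\tau$ is conserved). Consequently $\chi N_\kappa \chi_0 A(D) E^-_k$ has canonical relation contained in a set whose left factor is a \emph{past}-pointing light-like covector over $\mcn$, while $E^{+,\ast}_j$ has canonical relation $(C^+_{wv})^{-1}$ whose right factor is a \emph{future}-pointing light-like covector over $\mcn$. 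These are disjoint subsets of $T^*\mcn \setminus 0$, so the composition is empty as a set of wavefront relations and the operator is smoothing on $\mcn_0$; the argument for $E^{-,\ast}_j \chi N_\kappa \chi_0 A(D) E^+_k$ is identical with the roles of future and past reversed.

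The main obstacle I anticipate is verifying cleanly that the composition $(C^+_{wv})^{-1} \circ \La_1 \circ C^+_{wv}$ is transversal (not merely clean) so that the resulting operator is honestly pseudodifferential of the stated order with the stated principal symbol, rather than a more general Fourier integral operator or an operator with a loss in the order from an excess. This requires tracking how the excess-one intersection of $\La_1$ with $\La_\pm$ (used in Lemma~\ref{lm-comp3}) interacts with the second composition against $E^{\pm,\ast}_j$; one expects the excess to be "absorbed" by the fiber integration in Lemma~\ref{lm-comp3} so that the second composition is genuinely transversal, but this should be checked by writing the three canonical relations in local coordinates near a point of $\La_\pm$ and confirming the transversality of the relevant fiber-product map. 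Once that is in place, the order bookkeeping and the non-vanishing of the symbol are routine given Lemmas~\ref{lm-comp3} and~\ref{lm-wvsym}.
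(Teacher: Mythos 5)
Your part (2) argument (time-orientation preservation along $\La_1$ plus disjointness of future/past cones, hence empty wavefront relation) is essentially the paper's argument and is fine. The problem is in part (1): your key claim that $C^{\pm}_{wv}$ is a graph-type canonical relation and that the composition with $(C^{\pm}_{wv})^{-1}$ is transversal is false. $C^{\pm}_{wv}$ is a flow-out relation: over each point $(\bar z,\bar\zeta)\in T^*\mcn_0\setminus 0$ it contains the entire bicharacteristic strip through the lifted covector, so the projection $C^{\pm}_{wv}\to T^*\mcn_0$ has one-dimensional fibers and the relation is not a graph. Consequently the composition $(C^{\pm}_{wv})^{-1}\circ C^{\pm}_{wv}$ is a \emph{clean} composition with excess one (this is exactly Lemma 6.3 of \cite{Wan2}, which the paper invokes), and one must use the clean FIO calculus of \cite[Theorem 25.2.3]{Ho4}, with the excess contributing $e/2=1/2$ to the order and the principal symbol given by integration over the one-dimensional fibers.

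Your own order bookkeeping betrays this: with orders $3/4-j$ and $-1/4-k+m$, a transversal composition would give $1/2-j-k+m$, not the $1-j-k+m$ you wrote (note $3/4-1/4=1/2$); the stated order $1-j-k+m$ is recovered precisely because of the excess-one shift. So the "main obstacle" you flag at the end is not something that gets absorbed into transversality — the second composition genuinely is clean with excess one, and the correct fix is to cite (or reprove) the clean intersection of $\La^{\pm,-1}$ with $\La^{\pm}$ and run the clean calculus, exactly as the paper does. The ellipticity claim then still goes through, since the fiber integration of a product of real, non-vanishing symbols of fixed sign (Lemma \ref{lm-wvsym} and the positivity of the symbol of $N_\kappa$ on $\La_1\setminus\La_0$) is non-vanishing.
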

\bpf
 First, $E^{\pm, *}_j \in I^{-1/4 + 1-j}(\mcn_0, \mcn; C_{wv}^{\pm, -1})$ and $\chi N_\kappa  \chi_0 A(D)E_k^\pm \in I^{-1/4  -k + m}$ $(\mcn, \mcn_0; C_{wv}^\pm)$. Let $\La^\pm = (C_{wv}^\pm)'$ and $\La^{\pm, -1} = (C_{wv}^{\pm, -1})'$.  It is proved in Lemma 6.3 of \cite{Wan2} that  $\La^{\pm, -1}$ intersect $\La^\pm$  cleanly with excess one.   Now we can use the clean FIO calculus \cite[Theorem 25.2.3]{Ho4} to conclude that $E^{\pm, \ast}_j \chi N_\kappa  \chi_0 A(D)E_k^\pm$ $ \in \Psi^{1 - j - k + m}(\mcn)$. As both principal symbols of $E^{\pm, \ast}_j$ and $  \chi N_\kappa  \chi_0 A(D)E_k^\pm$ are real  and   non-vanishing (modulo the Maslov factor), the principal of the composition is the integration of the product of principal symbols so is also non-vanishing. This proves part (1).  
Part (2) can be seen from a wave front set analysis using e.g.\ \cite[Theorem 1.3.7]{Dui}. 
 \epf

\section{Proof of Theorem \ref{thm-main2}} \label{sec-pf2}
 We will prove a stronger version of Theorem \ref{thm-main2}. 
 \begin{theorem}\label{thm-main3} 
{\bf (Assumption)} Let $f$ be the solution of \eqref{eq-cauchy} on $\mcm$ with Cauchy data $f_1 \in H^{2}(\mcm_0), f_2\in H^{1}(\mcm_0), s\in \mbr$ supported in a compact set $\mcx$ of $\mcm_0$ such that $f$ is supported in $\mcv.$ Suppose that  
\begin{enumerate}
 \item the coefficients $A_j(z)$ in \eqref{eq-hyper} are real valued smooth functions. 
 \item $A(D)$ is a pseudo-differential operator as in Lemma \ref{lm-comp3} 
\item  When $\sigma = k = 0$, $L\chi_0 A(D)f = 0$  implies $f = 0.$ 
 \end{enumerate}
 Let $u$ be the solution of \eqref{eq-boltz} with zero initial condition and source $\chi_0A(D)f$. 
 
{\bf (Conclusion)} There exists an open dense set $\mcu$ of $C^\infty(\mcv\times \mbs^2)\times C^6(\mcv\times \mbs^2\times \mbs^2)$ such that for $(\sigma, k)\in \mcu$, 
 $f$ is uniquely determined by $u_T$. Furthermore, there exists $C >0$ such that  
\beq
\|u\|_{H^{2+m}(\mcm)} \leq C \|(f_1, f_2)\|_{H^{2+m}(\mcm_0)\times H^{1+m}(\mcm_0)}\leq C \|u_T\|_{H^{5/2}(\mcc)}
\eeq 
\end{theorem}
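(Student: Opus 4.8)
The plan is to adapt the method of Theorem~\ref{thm-main1} and of \cite{VaWa,Wan2}: write the measurement map as a weighted light ray transform plus a compact remainder, compose it with a parametrix of the Cauchy problem, invert the resulting normal operator microlocally, and remove the remainder by an analytic Fredholm argument. I treat only Theorem~\ref{thm-main3}, since Theorem~\ref{thm-main2} is the case $A(D)=\id$, $m=0$. Following Section~\ref{sec-direct}, write $X=\rho_T T_1^{-1}(\id-KT_1^{-1})^{-1}J$; by the formula for $T_1^{-1}$ there, $L_\kappa:=\rho_T T_1^{-1}J$ applied to a $(t,x)$-function supported in $(0,T)$ is a weighted light ray transform \eqref{eq-lrayw} with a positive smooth weight $\kappa$ determined by $\sigma$. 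Expanding $(\id-KT_1^{-1})^{-1}=\id+KT_1^{-1}(\id-KT_1^{-1})^{-1}$ gives
\beq
\begin{gathered}
u_T=X\chi_0A(D)f=L_\kappa\,\chi_0A(D)f+\widetilde E\,(\chi_0A(D)f),\\
\widetilde E:=\rho_T T_1^{-1}KT_1^{-1}(\id-KT_1^{-1})^{-1}J .
\end{gathered}
\eeq
Then substitute the Duistermaat--H\"ormander parametrix $f=E_1f_1+E_2f_2$ of Proposition~\ref{prop-wvpara} (modulo $C^\infty$) and the splitting $E_k=E_k^++E_k^-$ of Section~\ref{sec-comp}.

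Next I would form the localized normal operator. Fix $\chi\in C^\infty$ supported in $(T,T')\times\mbr^3$ with $T'>T$; since $\chi\chi_0=0$ the diagonal Lagrangian $\La_0$ drops out, so $\chi N_\kappa\chi_0\in I^{-3/2}(\mbr^{3+1}\times\mbr^{3+1};\La_1)$ with non-vanishing real symbol (Proposition~\ref{prop-normal}). Applying $L_\kappa^\ast$, then $\chi$, then $E_j^{\pm,\ast}$ ($j=1,2$) to $u_T$, and using $N_\kappa=L_\kappa^\ast L_\kappa$ and $f=\sum_kE_kf_k$, one obtains $E_j^{\pm,\ast}\chi L_\kappa^\ast u_T=\sum_{k,\pm'}E_j^{\pm,\ast}\chi N_\kappa\chi_0A(D)E_k^{\pm'}f_k+E_j^{\pm,\ast}\chi L_\kappa^\ast\widetilde E\,(\chi_0A(D)(E_1f_1+E_2f_2))$. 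By Lemma~\ref{lm-comp4} the mixed-sign terms ($\pm'=\mp$) are smoothing, while $E_j^{\pm,\ast}\chi N_\kappa\chi_0A(D)E_k^{\pm}\in\Psi^{1-j-k+m}(\mcn_0)$ are elliptic with real symbol; I assemble these into the $2\times2$ operator $\msn=(\msn_{jk})$, $\msn_{jk}=\sum_\pm E_j^{\pm,\ast}\chi N_\kappa\chi_0A(D)E_k^{\pm}$, acting on $(f_1,f_2)$. To see $\msn$ is an elliptic system on $\mcn_0$: by Lemma~\ref{lm-wvsym} the principal symbols $e_j^\pm$ of $E_j^\pm$ are real with $e_1^+,e_1^->0$ and $e_2^+>0>e_2^-$; by clean FIO calculus the principal symbol of $E_j^{\pm,\ast}\chi N_\kappa\chi_0A(D)E_k^{\pm}$ is a fibre integral of $e_j^\pm\,\sigma(N_\kappa)\,\sigma(A(D))\,e_k^\pm$ with $\sigma(N_\kappa)>0$ and $\sigma(A(D))\neq0$, hence has the sign of $e_j^\pm e_k^\pm$, and the principal symbol matrix $\sigma(\msn)$ is the sum of the two sign-definite contributions coming from $C_{wv}^+$ and $C_{wv}^-$. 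A Cauchy--Schwarz estimate within each of the two sectors, combined with the opposite relative signs $e_1^+e_2^+>0>e_1^-e_2^-$, then yields $\det\sigma(\msn)>0$. Hence a parametrix $Q$ with $Q\msn=\id+R_0$, $R_0$ smoothing, exists; since all Cauchy data is supported in the fixed compact $\mcx$, $R_0$ is compact on $H^{2+m}(\mcn_0)\times H^{1+m}(\mcn_0)$.

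For the last step, the $\widetilde E$-contribution, after composition with $L_\kappa^\ast$, $\chi$, $E_j^{\pm,\ast}$ and $Q$, defines a compact operator $\msk$ on $H^{2+m}(\mcn_0)\times H^{1+m}(\mcn_0)$ --- established exactly as in Theorem~\ref{thm-main1}, by expanding $k$ in spherical harmonics, applying Proposition~\ref{prop-sinest1}, and using the Sobolev bounds of Proposition~\ref{prop-sobo} and of the $E_j^\pm$. Thus $Q(E_j^{\pm,\ast}\chi L_\kappa^\ast u_T)_{j,\pm}=(\id+R_0+\msk)(f_1,f_2)$. Replacing $(\sigma,k)$ by $(\la\sigma,\la k)$, $\la\in\mbc$: the weight $\kappa$ is an entire function of $\la$, so $N_\kappa$ depends analytically on $\la$, while $(\id-\la KT_1^{-1})^{-1}$ is meromorphic, so $\la\mapsto R_0(\la)+\msk(\la)$ is a meromorphic family of compact operators. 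At $\la=0$ we have $\sigma=k=0$, $\kappa\equiv1$, $\widetilde E=0$, $L_\kappa=L$; Assumption~(3) (injectivity of $L\chi_0A(D)$ on solutions of \eqref{eq-cauchy}) together with the elliptic parametrix forces $\id+R_0(0)+\msk(0)$ to be injective, hence invertible (Fredholm of index zero). By the analytic Fredholm theorem \cite[Theorem~VI.14]{ReSi1} there is a discrete $\mcs\subset\mbc$ off which $\id+R_0(\la)+\msk(\la)$ is invertible; as $(\sigma,k)$ varies, the exceptional set meets every complex line through the origin in a discrete set, so has empty interior, and is closed by continuity, so its complement $\mcu$ is open and dense. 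For $(\sigma,k)\in\mcu$ we invert and chain the Sobolev bounds of $L_\kappa^\ast$, $E_j^{\pm,\ast}$, $Q$ and $\chi$ to obtain $\|(f_1,f_2)\|_{H^{2+m}(\mcn_0)\times H^{1+m}(\mcn_0)}\le C\|u_T\|_{H^{5/2}(\mcc)}$; the remaining inequality is the forward well-posedness/energy estimate for the wave and Boltzmann equations (Proposition~\ref{prop-wvpara}, Theorem~\ref{thm-exist}). Uniqueness follows since $u_T=0$ forces $(f_1,f_2)=0$, hence $f=0$.

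The step I expect to be the main obstacle is the ellipticity of the assembled system $\msn$: the off-diagonal blocks $\msn_{12},\msn_{21}$ may individually have vanishing principal symbol, and it is precisely the sign flip $e_2^-<0$ (against $e_1^->0$) of Lemma~\ref{lm-wvsym} that prevents the $C_{wv}^+$ and $C_{wv}^-$ contributions from cancelling in $\det\sigma(\msn)$. A secondary difficulty is the bookkeeping of FIO and $\Psi$DO orders along the chain $L_\kappa^\ast\to\chi\to E_j^{\pm,\ast}\to Q$ so as to land exactly in $H^{2+m}(\mcn_0)\times H^{1+m}(\mcn_0)$, and checking (via Proposition~\ref{prop-sinest1}, as for Theorem~\ref{thm-main1}) that $\widetilde E$ composed into $\msk$ is genuinely compact in this topology and meromorphic in $\la$.
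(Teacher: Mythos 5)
Your overall architecture matches the paper's proof: the decomposition $X\chi_0 A(D)f=L_\kappa\chi_0 A(D)f+\widetilde E\,\chi_0 A(D)f$, the substitution $f=E_1f_1+E_2f_2$ with the splitting $E_k^\pm$, the localized normal operator $\chi N_\kappa\chi_0$ with $\chi\chi_0=0$, the composition Lemmas \ref{lm-comp3}--\ref{lm-comp4}, the compactness of the scattering remainder as in Theorem \ref{thm-main1}, and the analytic Fredholm argument in $\la$ seeded at $\la=0$ by assumption (3) are all as in Section \ref{sec-pf2}. The genuine gap is the step you yourself flag: ellipticity of the sector-summed system $\msn_{jk}=\sum_\pm E_j^{\pm,*}\chi N_\kappa\chi_0 A(D)E_k^\pm$. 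The ``Cauchy--Schwarz within each sector'' is not available here. The sector blocks $G^\pm_{jk}=\sigma\bigl(E_j^{\pm,*}\chi N_\kappa\chi_0 A(D)E_k^\pm\bigr)$ are not Gram matrices: the left factor carries $\chi$ while the right carries $\chi_0 A(D)$, and, by the clean-composition formula, each symbol is a double fiber integral over the two intermediate times $t\in\supp\chi$, $t'\in\supp\chi_0$ along the bicharacteristic against the symbol of $N_\kappa$ on $\La_1$, which contains the coupling factor $|t-t'|^{-2}$ (see \eqref{eq-lrayker1}). That kernel is pointwise positive but not of positive type: for $t_1<t_2$ in $\supp\chi$ and $t_1'<t_2'$ in $\supp\chi_0$ one has $K(t_1,t_1')K(t_2,t_2')-K(t_1,t_2')K(t_2,t_1')<0$ for $K=|t-t'|^{-2}$, so the inequality $G^\pm_{12}G^\pm_{21}\le G^\pm_{11}G^\pm_{22}$ is not automatic and can fail. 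Consequently $\det\sigma(\msn)=\det(G^++G^-)$ is not shown to be nonvanishing: the cross terms are indeed positive because $G^+_{12},G^+_{21}>0>G^-_{12},G^-_{21}$ (the sign flip $e_2^+>0>e_2^-$ of Lemma \ref{lm-wvsym}), but they need not dominate possibly negative within-sector determinants. So the central ellipticity claim is unproven as stated.

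The repair is exactly the paper's organization, and it is available to you since you already apply the sector pieces separately: do not sum over $\pm$. Keep the two equations obtained from $E_1^{+,*}\chi L_\kappa^*$ and $E_1^{-,*}\chi L_\kappa^*$ (the operators $E_2^{\pm,*}$ are not needed). Each diagonal operator $E_1^{\pm,*}\chi N_\kappa\chi_0 A(D)E_1^\pm$ is scalar elliptic with positive principal symbol, being an iterated fiber integral of a one-signed integrand, so it admits a parametrix $Q^\pm$; then $B_\pm=Q^\pm E_1^{\pm,*}\chi N_\kappa\chi_0 A(D)E_2^\pm$ have principal symbols of opposite signs, hence $B_+-B_-$ is elliptic, the difference of the two equations recovers $f_2$ modulo smoothing, and back-substitution recovers $f_1$. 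In your matrix language: choose the rows indexed by the sectors $(1,+)$ and $(1,-)$ rather than summing over $\pm$; the determinant of that system is a product of a positive and a negative symbol minus a product of two positive symbols, hence nonvanishing by signs alone, with no Cauchy--Schwarz or total-positivity input. With this modification the remainder of your argument --- compactness of the $\widetilde E$-terms, meromorphy in $\la$, invertibility at $\la=0$ via assumption (3) and the compactness--uniqueness argument of \cite{VaWa}, and the Sobolev bookkeeping giving \eqref{eq-thm2sta} --- coincides with the paper's proof.
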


We make a few remarks. First, because the parametrix construction in Section \ref{sec-comp} is microlocal, it is convenient to work with smooth $\sigma$. However, it is likely to lower the regularity requirement if one is willing to keep track of the dependency of the constant $C$ on $\sigma$ throughout the arguments.   Next, about  assumption (3). This condition only involves the Minkowski light ray transform or the transport regime ($\sigma = k = 0$). Let's consider the example in Section \ref{sec-cmb} in which $A(D) = \frac{\p}{\p t} + B(z)$. When $\sigma = k = 0$, $B(z) = 0$ so $A(D) = \p_t.$ Note that $f$ satisfies \eqref{eq-bar}. If $L(\chi_0A(D)f) = 0,$ because $\chi_0A(D)f$ is compactly supported in $\mbr^4$  and say $\chi_0A(D)f\in L^2_{\comp}(\mcm)$, we know from the injectivity of Minkowski light ray transform that $\chi_0A(D)f = 0$ so $\p_t f = 0$ on the support of $\chi_0$ Using the Bardeen's equation \eqref{eq-bar}, we get 
\beq
\lap f + B_0(t) f = 0
\eeq
for $t\in [0, T]$ almost everywhere. Fixed $t$, using the fact that $f$ is compactly supported in $x$ and that $\lap$ has no $L^2$ eigenvalues, we conclude that $f(t, x) = 0$ on $\supp\chi_0\times \mbr^3.$ By solving the Cauchy problem backward, we conclude that $f_1 = f_2 = 0$ so $f = 0$ on $\mcm.$ Thus assumption (3) of Theorem \ref{thm-main3} is satisfied. 

 We outline the proof of Theorem \ref{thm-main3}. We start with \eqref{eq-X}
\beq
X  \chi_0 f = \rho(\id - \id + (\id + T_1^{-1} K )^{-1}) T_1^{-1} \chi_0 f  = L_\kappa  \chi_0 f  + E \chi_0 f
\eeq
where 
\beq
E = \rho(- \id + (\id + T_1^{-1} K )^{-1}) T_1^{-1} 
\eeq
Now we write  $f = E_1f_1 + E_2 f_2$  
where $E_1, E_2$ are the parametrix of the Cauchy problem. We consider 
\beqq\label{eq-Xnew}
\begin{gathered}
X  \chi_0 A f = L_\kappa    \chi_0  A E_1 f_1 +  L_\kappa  \chi_0 A E_2 f_2 
  + E  \chi_0 A ( E_1 f_1 + E_2  f_2 )
  \end{gathered}
\eeqq 
Our goal is to convert the right hand side to identity plus compact operators. Roughly speaking, we will apply $E^{\pm, *}_k \chi L_\kappa^*, k = 1, 2$   to \eqref{eq-Xnew} and manipulate to get $f_1, f_2$. As a result, we need to analyze operators of two types. First 
\beqq\label{eq-pp1}
P_1 = E^{\pm, *}_i \chi L^*_\kappa L_\kappa   \chi_0  A E_j^{\pm}, \quad i, j = 1, 2,
\eeqq
and second 
\beqq\label{eq-pp2}
P_2 = E^{\pm, *}_i \chi L^*_\kappa E  \chi_0 A E_j, \quad i, j = 1, 2.
\eeqq
For $P_1$, we will use results in Section \ref{sec-comp} to find a parametrix which are pseudo-differential operators of order $-1$ on $\mcm_0$. In fact, we will also need to replace $\sigma$ by $\la \sigma, \la\in \mbc$ and consider the holomorphic dependency on $\la$ in order to apply the analytic Fredholm theory, because we do not know the injectivity of the weighted light ray transform in general.  For $P_2$, we show the compactness as in Section \ref{sec-pfthm1}. Finally, we finish the proof using the analytic Fredholm theorem. We split the proof into three subsections.

\subsection{Analysis of $P_1$} \label{sec-part1}    
We first ignore the terms with operator $E$ in \eqref{eq-Xnew} and focus on the parametrix construction. 
 We apply $\chi L_\kappa^*$ to $L_\kappa  \chi_0 Af$ to get  
\beqq\label{eq-u2}
\begin{gathered}
\chi N_\kappa  \chi_0 Af = \chi N_\kappa  \chi_0 AE_1^+ f_1 + \chi N_\kappa  \chi_0 AE_2^+ f_2 
 + \chi N_\kappa  \chi_0 AE_1^- f_1 
 +  \chi N_\kappa  \chi_0 AE_2^-f_2.
 \end{gathered}
\eeqq
Now we apply $E^{+, \ast}_{1}$ and use  Lemma \ref{lm-comp4} to get 
\beqq\label{eq-upara1}
\begin{gathered}
E_1^{+, *}\chi N_\kappa \chi_0  A f = E_1^{+, *} \chi N_\kappa  \chi_0 AE_1^+ f_1 + E_1^{+, *} \chi N_\kappa  \chi_0 AE_2^+ f_2  
+ R_1 f_1 +  R_2 f_2
 \end{gathered}
\eeqq
with $R_1, R_2$ smoothing operators. In the following, we use $R_j, j = 1, 2$ to denote generic smoothing operators which may change line by line. From Lemma \ref{lm-comp4} part (1), we see that $E_1^{+, *} \chi N_\kappa  \chi_0 AE_1^+ \in \Psi^{-1+m}(\mcm_0)$ and $E_1^{+, *} \chi N_\kappa  \chi_0 AE_2^+ \in \Psi^{-2+m}(\mcm_0)$. 

On the other hand, we apply $E_1^{-, \ast}$ to \eqref{eq-u2} to get 
\beqq\label{eq-upara2}
\begin{gathered}
E_1^{-, *}\chi N_\kappa  \chi_0  Af = E_1^{-, *} \chi N_\kappa  \chi_0 AE_1^- f_1 + E_1^{-, *} \chi N_\kappa  \chi_0 AE_2^- f_2  
+ R_1 f_1 +  R_2 f_2
 \end{gathered}
\eeqq
From Lemma \ref{lm-comp4} part (1), we see that $E_1^{-, *} \chi N_\kappa  \chi_0 AE_1^- \in \Psi^{-1+m}(\mcm_0)$ and $E_1^{-, *} \chi N_\kappa  \chi_0 AE_2^- \in \Psi^{-2+m}(\mcm_0)$. Without loss of generality, we assume that the principal symbol of $A$ on $\La_\pm$ is positive. Then it follows from Lemma \ref{lm-wvsym}, the composition results Lemma \ref{lm-comp3} and \ref{lm-comp4}, and the positivity of the symbol of $N_\kappa$ on $\La_1\backslash \La_0$ in Proposition \ref{prop-weiker}   that 
\beq
\begin{gathered}
\sigma(E_1^{+, *} \chi N_\kappa  \chi_0 AE_1^+) > 0, \quad  \sigma( E_1^{-, *} \chi N_\kappa  \chi_0 AE_1^-) > 0\\
\sigma(E_1^{+, *} \chi N_\kappa  \chi_0 AE_2^+ ) >0,  \quad  \sigma( E_1^{-, *} \chi N_\kappa  \chi_0 AE_2^-) <0. 
\end{gathered}
\eeq
Let $Q^+, Q^- \in \Psi^{1-m}(\mcm_0)$ be parametrices for $E_1^{+, *} \chi N_\kappa  \chi_0 AE_1^+$ and $E_1^{-, *} \chi N_\kappa  \chi_0  AE_1^-$ respectively. We know that the principal symbols of $Q^\pm$ are positive. Applying $Q^\pm$ to \eqref{eq-upara1}, \eqref{eq-upara2}, we get 
\beqq\label{eq-upara3}
\begin{gathered}
Q^+ E_1^{+, *}\chi N_\kappa \chi_0  A f =   f_1 + B_+ f_2 + R_1 f_1 +  R_2 f_2 
\end{gathered}
\eeqq
\beqq\label{eq-upara4}
\begin{gathered} 
Q^- E_1^{-, *}\chi N_\kappa \chi_0  A u =  f_1 + B_- f_2 + R_1 f_1 +  R_2 f_2
\end{gathered}
\eeqq
where
\beq
B_+ = Q_+E_1^{+, *} \chi N_\kappa  \chi_0 AE_2^+, \quad B_- = Q_-E_1^{-, *} \chi N_\kappa  \chi_0 AE_2^- 
\eeq
From \eqref{eq-upara3}, \eqref{eq-upara4}, we get 
\beq
Q^+ E_1^{+, *}\chi N_\kappa \chi_0  Af  - Q^- E_1^{-, *}\chi N_\kappa  \chi_0 Af =  (B_+ - B_-) f_2 + R_1  f_1 +  R_2  f_2
\eeq
Note that $B_\pm \in \Psi^{-1}(\mcm_0)$ are elliptic. Also, the principal symbol of $B_+$ is positive but the principal symbol of $B_-$ is negative. Thus $B_+ - B_- \in \Psi^{-1}(\mcm_0)$ is elliptic. Let $W\in \Psi^1(\mcm_0)$ be a parametrix for $B_+ - B_-$. We get 
\beqq\label{eq-fpara1}
WQ^+ E_1^{+, *}\chi N_\kappa \chi_0  Af  - WQ^- E_1^{-, *}\chi N_\kappa  \chi_0 Af =  f_2 + R_1  f_1 +  R_2  f_2
\eeqq
So we solved $f_2$ up to smooth terms. We can use $f_2$ for example in \eqref{eq-upara3} to get  
\beqq\label{eq-fpara2}
\begin{gathered}
Q^+ E_1^{+, *}\chi N_\kappa \chi_0  Af - B_+(WQ^+ E_1^{+, *}\chi N_\kappa  \chi_0 Af  - WQ^- E_1^{-, *}\chi N_\kappa   \chi_0 Af) \\
=   f_1  + R_3 f_1 +  R_4 f_2
\end{gathered}
\eeqq
where $R_3, R_4$ are smoothing operators. 
We are done with the parametrix construction.

\subsection{Compactness of $P_2$}\label{sec-part2}
We repeat the construction with the terms involving operator $E$ in \eqref{eq-Xnew}. Using \eqref{eq-Xnew} and \eqref{eq-fpara1}, we arrive at 
\beqq\label{eq-fpara3}
\begin{gathered}
WQ^+ E_1^{+, *}\chi N_\kappa  \chi_0 Af  - WQ^- E_1^{-, *}\chi N_\kappa  \chi_0 Af =  f_2 + R_1  f_1 +  R_2  f_2 \\
+ WQ^+ E_1^{+, *}\chi L_\kappa^* E \chi_0  A ( E_1 f_1 + E_2 f_2)
 - WQ^- E_1^{-, *}\chi  L_\kappa^* E  \chi_0 A( E_1 f_1 + E_2 f_2)\\
  = f_2 + R_1  f_1 +  R_2  f_2  + R_1' f_1 + R_2'f_2
\end{gathered}
\eeqq
where 
\beqq\label{eq-rp12}
\begin{gathered}
R_1' = WQ^+ E_1^{+, *}\chi L_\kappa^* E  \chi_0 AE_1 - WQ^- E_1^{-, *}\chi  L_\kappa^* E \chi_0  AE_1\\
R_2' = WQ^+ E_1^{+, *}\chi L_\kappa^* E  \chi_0 AE_2 - WQ^- E_1^{-, *}\chi  L_\kappa^* E  \chi_0 AE_2
\end{gathered}
\eeqq
Using \eqref{eq-fpara2} and \eqref{eq-fpara3}, we get  
\beqq\label{eq-fpara4}
\begin{gathered}
Q^+ E_1^{+, *}\chi N_\kappa \chi_0  Af - B_+(WQ^+ E_1^{+, *}\chi N_\kappa  \chi_0 Af  - WQ^- E_1^{-, *}\chi N_\kappa  \chi_0 Af) \\
=   f_1  + R_3 f_1 +  R_4 f_2 + Q^+ E_1^{+, *}\chi  L_\kappa^*  E  \chi_0 A( E_1 f_1 + E_2 f_2) \\
 - B_+R_1' f_1 + B_+R_2' f_2 
  =  f_1  + R_3 f_1 +  R_4 f_2 + R_3' f_1 + R_4'' f_2
\end{gathered}
\eeqq
where 
\beqq\label{eq-rp34}
\begin{gathered}
R_3' = - B_+ R_1' +  Q^+ E_1^{+, *}\chi  L_\kappa^* E  \chi_0 AE_1 \\
R_4' =  B_+R_2'  +  Q^+ E_1^{+, *}\chi L_\kappa^*   E  \chi_0 A E_2
\end{gathered}
\eeqq
We write \eqref{eq-fpara3} and \eqref{eq-fpara4} in the matrix form as 
\beqq\label{eq-matrix}
\begin{gathered}
\begin{pmatrix}
Q^+ E_1^{+, *}\chi N_\kappa  \chi_0 Af - B_+(WQ^+ E_1^{+, *}\chi N_\kappa \chi_0  Af  - WQ^- E_1^{-, *}\chi N_\kappa  \chi_0 Af)\\
WQ^+ E_1^{+, *}\chi N_\kappa \chi_0  Af  - WQ^- E_1^{-, *}\chi N_\kappa  \chi_0 Af
\end{pmatrix}\\
= \begin{pmatrix}
f_1\\
f_2
\end{pmatrix}
+  \begin{pmatrix}
R_3 + R_3' & R_4+ R_4'\\
R_1 + R_1' & R_2 + R_2'
\end{pmatrix} 
\begin{pmatrix}
f_1\\
f_2
\end{pmatrix}  
\end{gathered}
\eeqq
We already know that $R_j, j = 1,2, 3, 4$ are smoothing operators. We show that $R_j', j = 1, 2, 3, 4$ are compact operators among suitable spaces. 

Among the operators $R'_j, j = 1, 2, 3, 4$, there is a common operator $L^*_\kappa E$ see \eqref{eq-rp12}, \eqref{eq-rp34}. We can prove as in Section \ref{sec-pfthm1} that $L^*_\kappa E$ is compact from $H^2(\mcm)$ to $H^3(\mcm)$ (or $\p_t L_\kappa^* E, \p_x L_\kappa^*E$ are compact on $H^2(\mcm)$). Note that the additional weight factor $\kappa$ in $T_1^{-1}$ was considered in Section \ref{sec-pfthm1}. For the other components of $R_j', j = 1, 2, 3, 4$, we know that  $WQ^\pm \in \Psi^{2 - m}(\mcm_0)$ and $Q^+, B_+WQ^\pm \in \Psi^{1 - m}(\mcm_0)$  so they are bounded  operators as
\beqq\label{eq-wqest}
\begin{gathered}
WQ^\pm: H_{\comp}^s(\mcm_0) \rightarrow H_{\loc}^{s - 2 + m}(\mcm_0)\\
Q^+, B_+WQ^\pm:  H_{\comp}^s(\mcm_0) \rightarrow H_{\loc}^{s - 1+ m}(\mcm_0)
\end{gathered}
\eeqq
Also, for $j = 1, 2$ 
\beq
\begin{gathered}
 E_j^*: H_{\comp}^s(\mcm) \rightarrow H_{\loc}^{s  + j - 1}(\mcm_0), \quad  E_j: H_{\comp}^s(\mcm_0) \rightarrow H_{\loc}^{s + j-1}(\mcm)
\end{gathered}
\eeq
Now we look at $R_1'$ on $H^{2+ m}(\mcm_0)$ which can be decomposed as 
\beq
\begin{gathered}
H^{2+m}(\mcm_0) \overset{A E_1}{\longrightarrow}  H^{2}(\mcm) \overset{L_\kappa^* E}{\longrightarrow}  H^3(\mcm) \overset{WQ^\pm E_1^{+, *}\chi }{\longrightarrow}  H^{1+ m}(\mcm_0)
\end{gathered}
\eeq
in which all the operators are bounded and $L_\kappa^*E$ is compact in addition.  So $R_1'$ is compact. We use similar diagram below. 
Next, for $R_2'$ on   $H^{1+ m}(\mcm_0)$, we have
\beq
\begin{gathered}
H^{1+m}(\mcm_0) \overset{A E_2}{\longrightarrow}  H^{2}(\mcm) \overset{L_\kappa^* E}{\longrightarrow}  H^3(\mcm) \overset{WQ^\pm E_1^{+, *}\chi }{\longrightarrow}  H^{1 + m}(\mcm_0)
\end{gathered}
\eeq
 Similarly, for $R_3', R_4'$, we have
\beq
\begin{gathered}
R_3': H^{2+m}(\mcm_0) \overset{A E_1}{\longrightarrow}  H^{2}(\mcm) \overset{L_\kappa^* E}{\longrightarrow}  H^3(\mcm) \overset{Q^\pm E_1^{+, *}\chi }{\longrightarrow}  H^{m+2}(\mcm_0)\\
R_4': H^{1+m}(\mcm_0) \overset{A E_2}{\longrightarrow}  H^{2}(\mcm) \overset{L_\kappa^* E}{\longrightarrow}  H^3(\mcm) \overset{Q^\pm E_1^{+, *}\chi }{\longrightarrow}  H^{m+2}(\mcm_0)
\end{gathered}
\eeq
We reached the conclusion that $R_j', j = 1, 2, 3, 4$ are compact operators.  

\subsection{Completion of the proof}
From \eqref{eq-matrix}, we obtain an operator ${\bf Id}  + {\bf R}$ on $H^{s+1}(\mcx)\times H^s(\mcx)$ where $\mcx$ is a compact subset of $\mcm$ and  
\beqq\label{eq-R}
 {\bf R} =  \begin{pmatrix} 
R_3 + R_3' & R_4+ R_4'\\
R_1 + R_1' & R_2 + R_2'
\end{pmatrix} 
\eeqq
is compact.  To apply the analytic Fredholm theorem, we let $\la\in \mbc$ and replace $\sigma, k$ in \eqref{eq-boltz} by $\la \sigma, \la k.$ We denote the corresponding operator by  ${\bf R}(\la)$.

Observe that  the weight $\kappa$ in the light ray transform $L_\kappa$ \eqref{eq-lrayw} is now holormophic in $\la$. Then the calculation of kernel in Proposition \ref{prop-weiker} shows that the kernel of the normal operator $N_\kappa = L_\kappa^*L_\kappa$ is holomorphic in $\la.$ In particular, the proof of Proposition \ref{prop-normal} shows that $N_\kappa$ is an FIO on $\La_1$ with symbol holomorphic in $\la$. If we follow the construction in Section \ref{sec-comp}, we see that the operators in Lemma \ref{lm-comp3} and Lemma \ref{lm-comp4} are holomorphic in $\la.$  Finally, going through the construction in Subsection \ref{sec-part1}, we see that the operators on the left hand side of \eqref{eq-matrix} are holomorphic in $\la$ and the remainder terms $R_1, R_2, R_3, R_4$ are holomorphic in $\la.$ We remark that it suffices to consider the parametrix construction up to a finite order residue term, that is with $R_j, j = 1, 2, 3, 4$ belonging to $\Psi^{-N}(\mcm_0)$ for $N$ sufficiently large. Then it is clear that $R_j$ are holomorphic in $\la$ and compact on suitable Sobolev spaces.  

Next, in the expression \eqref{eq-rp12} and \eqref{eq-rp34}, we know that $L_\kappa^* E$ is meromorphic in $\la$ as shown in Section \ref{sec-pfthm1}. 
The other operators in \eqref{eq-rp12} and \eqref{eq-rp34} are holomorphic in $\la$, and $R_j', j = 1, 2, 3, 4$ are meromorphic in $\la.$ This proves that ${\bf R}(\la)$ is meromorphic in $\la.$ 

When $\la = 0$, we see that \eqref{eq-matrix} is reduced to 
\beqq\label{eq-matrix1}
\begin{gathered}
\begin{pmatrix}
Q^+ E_1^{+, *}\chi N  \chi_0 Af - B_+(WQ^+ E_1^{+, *}\chi N Af  - WQ^- E_1^{-, *}\chi N  \chi_0 Af)\\
WQ^+ E_1^{+, *}\chi N  \chi_0 Af  - WQ^- E_1^{-, *}\chi N  \chi_0 Af
\end{pmatrix}\\
= \begin{pmatrix}
f_1\\
f_2
\end{pmatrix}
+  \begin{pmatrix}
R_3  & R_4 \\
R_1 & R_2 
\end{pmatrix} 
\begin{pmatrix}
f_1\\
f_2
\end{pmatrix}  
\end{gathered}
\eeqq
where $N = L^*L$ with $L$ the Minkowski light ray transform.  We know that  $L^*: H_{\comp}^{s}(\mcc)\rightarrow H_{\loc}^{s +\ha}(\mbr^{3+1})$ is bounded and 
\beq
E_1^{\pm, *} \chi L^* :   H_{\comp}^{s }(\mcc)\rightarrow H_{\loc}^{s + 1/2}(\mcm)
\eeq
is bounded.  Thus using  estimates \eqref{eq-wqest} and \eqref{eq-matrix1},   we get for $\rho\in \mbr$ that 
\beqq\label{eq-f12}
\begin{gathered}
\|f_1\|_{H^{2+ m}(\mcm_0) } \leq C\| L \chi_0 A(D) f\|_{H^{1 + 3/2}(\mcc) } + C_\rho \|f_1\|_{H^{s+1+\rho}} + C_\rho \|f_2\|_{H^{s+\rho}}\\
\|f_2\|_{H^{1+m}(\mcm_0) } \leq C\| L \chi_0 A(D) f\|_{H^{1 + 3/2}(\mcc) } + C_\rho \|f_1\|_{H^{s+1+\rho}} + C_\rho \|f_2\|_{H^{s+\rho}}
\end{gathered}
\eeqq 
From assumption (3) of Theorem \ref{thm-main3}, we know that   $L \chi_0 A(D)$ is injective. We can use the argument of Theorem 1.1 of \cite{VaWa} to remove the last two terms in each of the equations in \eqref{eq-f12} and obtain 
\beq
\begin{gathered}
\|f_1\|_{H^{2+ m}(\mcm_0) } \leq C\| L \chi_0 A(D) f\|_{H^{1 + 3/2}(\mcc) }, \quad 
\|f_2\|_{H^{1+m}(\mcm_0) } \leq C\| L \chi_0 A(D) f\|_{H^{1 + 3/2}(\mcc) }. 
\end{gathered}
\eeq
This shows the invertiblity of ${\bf Id} + {\bf R}(0)$.   Thus we can apply the analytic Fredholm theorem to conclude that ${\bf Id} + {\bf R}(\la)$ is invertible on $H^{s+1}(\mcx)\times H^s(\mcx)$ for $\la\in \mbc\backslash \mcs$ where $\mcs$ is a discrete set. Therefore, ${\bf Id} + {\bf R}$ with ${\bf R}$ in \eqref{eq-R} is invertible for $\sigma, k$ in an open dense set of $C^\infty \times C^6$.  

To get the stability estimate, we examine the operators in the left hand side of \eqref{eq-matrix}. We know that  $L_\kappa^*: H_{\comp}^{s}(\mcc)\rightarrow H_{\loc}^{s +\ha}(\mbr^{3+1})$ is bounded, see  Proposition \ref{prop-sobo}. We obtain that 
\beq
E_1^{\pm, *} \chi L_\kappa^* :   H_{\comp}^{s }(\mcc)\rightarrow H_{\loc}^{s + 1/2}(\mcm)
\eeq
is bounded. 
Thus using  estimates \eqref{eq-wqest},  the invertibility of ${\bf Id} + {\bf R}$ and the estimate of $A(D),$ we get 
\beq
\begin{gathered}
\|f_1\|_{H^{2+ m}(\mcm_0) } \leq C\| u_T\|_{H^{1 + 3/2}(\mcc) }, \quad 
\|f_2\|_{H^{1+m}(\mcm_0) } \leq C\| u_T\|_{H^{1 + 3/2}(\mcc) }. 
\end{gathered}
\eeq  
This completes the proof of Theorem \ref{thm-main3}. 

\begin{proof}[Proof of Theorem \ref{thm-main2}]
We need to check (3) of Theorem \ref{thm-main3} which follows from the injectivity of the Minkowski light ray transform on $L^1_{\comp}(\mbr^4)$ (hence on $L^2_{\comp}(\mbr^4)$), see \cite[Theorem 8.1]{VaWa}. 
\end{proof}



\end{document}